\documentclass{amsart}
\usepackage{amsmath,amssymb,enumerate,amsthm}
\usepackage{comment}
\topmargin 5mm
\evensidemargin 2mm
\oddsidemargin 2mm
\textwidth 156mm
\textheight 214mm
\allowdisplaybreaks[2]

\makeatletter
    
    \@addtoreset{equation}{section}
  \makeatother

\newcommand{\C}{{\mathbb C}}

\newcommand{\N}{{\mathbb N}}
\newcommand{\Z}{{\mathbb Z}}
\newcommand{\R}{{\mathbb R}}

\newcommand{\mO}{{\mathcal O}}
\newcommand{\mN}{{\mathcal N}}

\newcommand{\mG}{{\mathcal G}}
\newcommand{\mU}{{\mathcal U}}

\newcommand{\fU}{{\mathfrak U}}
\newcommand{\mA}{{\mathcal A}}
\newcommand{\mB}{{\mathcal B}}
\newcommand{\acts}{\curvearrowright}
\newcommand{\ve}{\varepsilon}

\newcommand{\Ga}{\Gamma}

\newcommand{\la}{\lambda}
\newcommand{\al}{\alpha}
\newcommand{\be}{\beta}
\newcommand{\ga}{\gamma}
\newcommand{\de}{\delta}
\newcommand{\rh}{\rho}

\newcommand{\om}{\omega}
\newcommand{\vph}{\varphi}

\newcommand{\ggen}{( X^\omega )_{g\textrm{-reg}}}
\newcommand{\gfix}{( X^\omega )_g}

\newcommand{\Ggen}{( X^\omega )_{G\textrm{-reg}}}

\newcommand{\fgen}{( X^\omega )_{f\textrm{-reg}}}

\newcommand{\ffix}{( X^\omega )_f}
\newcommand{\AL}{C$^*$-algebra }
\newcommand{\ALs}{C$^*$-algebras }

\newtheorem{thm}{Theorem}[section]
\newtheorem{prop}[thm]{Proposition}
\newtheorem{cor}[thm]{Corollary}
\newtheorem{lem}[thm]{Lemma}
\newtheorem*{claim}{Claim}
\newtheorem{mainthm}{}

\theoremstyle{definition}
\newtheorem{defn}[thm]{Definition}
\newtheorem{exa}[thm]{Example}
\newtheorem{rem}[thm]{Remark}
\newtheorem{note}[thm]{Notation}
\title{On the Simplicity of C$^*$-algebras associated to multispinal groups}

\author{Keisuke Yoshida}
\email{kskyhuni@math.sci.hokudai.ac.jp}
\address{Graduate School of Science, Hokkaido University, \mbox{060-0810} Sapporo, Japan}
%\thanks{}
%
\subjclass{46L55, 46L08}

\keywords{self-similar group, multispinal group, KMS state, groupoid C$^*$-algebra}
\date{\today}

\begin{document}

\maketitle

\begin{abstract}
We characterize the simplicity of C$^*$-algebras arising from multispinal groups.
Let $\mO_{G_{\max}}$ be the universal \AL associated to a multispinal group $G$.
We show that the invertibility of a matrix completely determines the simplicity of $\mO_{G_{\max}}$.
\end{abstract}

%\ccode{Mathematics Subject Classification 2010: 46L08, 46L55}

\section{Introduction}
Many finitely generated \ALs have been studied since the first half of 20th century when operator algebras were introduced by F. J. Murray and J. von Neumann. 
Some of them are constructed from dynamical systems.
Such C$^*$-algebras include the Cuntz algebras \cite{Cu2}.
The Cuntz algebra $\mO_n$ can be regarded as the \AL arising from the shift maps on the symbolic dynamical system over a finite set $X$ with $|X| =n$.  
Here $n \geq 2$ is a natural number.
We write $X^\om$ for the set of right infinite words over $X$.
For each $x \in X$,
the shift map $T_x \colon X^\om \to X^\om$ is given by
\[
T_x(w) := xw
\]
for $w \in X^\om$.
One often identifies each shift map with an isometry on a Hilbert space.
In this paper,
we write $S_x$ for $T_x$ when we regard $T_x$ as an operator. 
Recall that the Cuntz algebra $\mO_n$ is the universal \AL generated by $\{ S_x \}_{x \in X}$ satisfying 
\begin{equation}
\label{Cuntzintro}
S_x^*S_x=1, \quad \sum_{y \in X} S_yS_y^* =1 
\end{equation} 
for any $x \in X$.
%Note that $X^\om$ is homeomorphic to the Cantor space.

We write $\text{Homeo}(X^\om)$ for the group of homeomorphisms on $X^\om$.
A subgroup $G \subset \text{Homeo}(X^\om)$ is said to be a \textit{self-similar group over $X$} if for any $g \in G, x \in X$ there exist $g|_{x} \in G, g(x) \in X$ with
\begin{equation}
\label{selfsimintro}
gS_x = S_{g(x)}g|_{x}.
\end{equation}
Let $\mu$ be the Bernoulli measure on $X^\om$ and let $\gfix$ be the set of fixed points of $g \in G$.

Self-similar groups play important rolls in geometric group theory.
For instance,
a self-similar group called the Grigorchuk group is known as the first example of a finitely generated group of intermediate growth \cite{Gr}.
The Grigorchuk group is a self-similar group over $\{0, 1\}$.
Iterated monodromy groups are also important examples.
They provide useful techniques to the study of some complex dynamical systems (see \cite{BN} for details).
In addition,
the following operator algebraic approach to iterated monogromy groups gives a $K$-theoretic invariant for complex dynamical systems (see \cite{Nek2}).

We study \ALs arising from self-similar groups. 
V. V. Nekrashevych has considered $*$-representations of self-similar groups.
In \cite{Nek2},
he introduced the universal \AL generated by a self-similar group $G$ and $\{S_x \}_{x \in X}$ satisfying equations (\ref{Cuntzintro}), (\ref{selfsimintro}).
In this paper,
we write $\mO_{G_{\max}}$ for the universal C$^*$-algebra.
We often assume that $G$ has a finite generation property called contracting (see Definition \ref{defcont}).
Then $\mO_{G_{\max}}$ is finitely generated \cite{Nek2}.
The \AL $\mO_{G_{\max}}$ is the main object of this paper.
Especially,
we discuss the simplicity of $\mO_{G_{\max}}$.

One can construct the groupoid $[G, X]$ of germs from self-similar group $G$ over $X$.
In \cite{Nek2},
it was shown that the universal \AL $\mO_{G_{\max}}$ is isomorphic to the full groupoid \AL $C^*([G, X])$ (see \cite{EP} for more general case). 
The groupoid $[G, X$] is minimal, effective and ample.
Thus, 
if the groupoid $[G, X]$ is Hausdorff and amenable,
then the C$^*$-algebras $C^*([G, X])$ and $\mO_{G_{\max}}$ are simple.
However,
there exists a self-similar group whose groupoid of germs is not Hausdorff. 
Such self-similar groups include the Grigorchuk group.
In \cite{CE},
L. O. Clark \textit{et al}. characterized the simplicity of \ALs of non-Hausdorff groupoids.
Using the characterization,
they show $\mO_{G_{\max}}$ is simple if $G$ is the Grigorchuk group. 
B. Steinberg and N. Szak\'{a}cs studied Steinberg algebras of non-Hausdorff groupoids arising from self-similar groups in \cite{SS}.
Combining results in \cite{CE, SS},
we obtain self-similar groups which provide non-Hausdorff amenable ample minimal effective groupoids with non-simple reduced (and full) groupoid C$^*$-algebras.
Some of such self-similar groups are multispinal groups \cite{SS}.
A multispinal group over $X$ is a self-similar group arising from two finite groups $A, B$ and a map $\Psi$ from $X$ to $\text{Aut}(A) \cup \text{Hom}(A, B)$.
Here $\text{Aut}(A)$ is the set of automorphisms on $A$ and $\text{Hom}(A, B)$ is the set of homomorphisms from $A$ to $B$.
We do not observe the definition here, see section 3 for it.
Note that every multispinal group is contracting \cite{SS}.
In addition,
the groupoid $[G, X]$ of germs is typically non-Hausdorff if $G$ is a multispinal group over $X$.
In \cite{Su},
Z. $\breve{\text{S}}$uni\'{c} constructed a family of self-similar groups which produces a generalization of the Grigorchuk group.  
Multispinal groups include this family.
For instance,
the Grigorchuk group is a multispinal group arising from two finite groups $\Z_2 \times \Z_2$ and $\Z_2$. 
Multispinal groups include other interesting self-similar groups.
For example,
Gupta-Sidki $p$-groups \cite{GS}, GGS-groups \cite{BG} 
and multi-edge spinal groups \cite{AK} are multispinal groups.
See \cite{SS} for details.

In this paper, 
we write $\mO_{G_{\min}}$ for a certain simple quotient of $\mO_{G_{\max}}$.
The quotient $\mO_{G_{\min}}$ is also introduced by V. V. Nekrashevych \cite{Nek}.
We write
\[
\langle G, X \rangle := \{S_ugS_v^* : u, v \in X^*, g \in G \}.
\]
For a representation of $\mO_{G_{\min}}$,
we consider the following state.
Define a function $\psi$ on $\langle G, X \rangle$ by 
\[
\psi(S_ugS_v^*) = \de_{u, v}\mu(\gfix)
\]
Here $\de$ is the Kronecker delta.
If $G$ is contracting,
then the function $\psi$ extends to KMS states $\psi_{\min}$ and $\psi_{\max}$ on $\mO_{G_{\min}}$ and $\mO_{G_{\max}}$,
respectively \cite{BL, LR, Y}. 
Here note that we have canonical embeddings of $G$ into $\mO_{G_{\min}}$ and $\mO_{G_{\max}}$.
The uniqueness of KMS states is studied in \cite{BL, LR, Y} but we do not need the uniqueness in this paper. 

%we observe contracting self-similar groups called multispinal groups (see ).
We discuss when the universal \AL $\mO_{G_{\max}}$ is simple under the assumption that $G$ is a multispinal group and $[G, X]$ is amenable. 
Note that we do NOT assume $[G, X]$ is Hausdorff.
%For the simplicity of $\mO_{G_{\max}}$,
%We compare $\mO_{G_{\min}}$ with the reduced groupoid \AL $C^*_r([G, X])$.
If $[G, X]$ is amenable and $C^*_r([G, X])$ is isomorphic to $\mO_{G_{\min}}$,
then the universal \AL $\mO_{G_{\max}}$ is simple.  
To get an isomorphism between $\mO_{G_{\min}}$ and $C^*_r([G, X])$,
we compare two Hilbert spaces.
One of the Hilbert spaces is the GNS space $H_{\psi_{\min}}$ of $\psi_{\min}$.
The reduced \AL $C_r^*([G, X])$ acts on the other one.
For the comparison between two Hilbert spaces,
we consider the full group C$^*$-algebra $C^*(A)$ of the finite subgroup $A$.
We identify $C^*(A)$ with finite dimensional C$^*$-subalgebra of $\mO_{G_{\max}}$ generated by $A$.
As a consequence,
we obtain the characterization of the simplicity as follows (see Theorem \ref{themsec5}).

\begin{mainthm}
Let $G = G(A, B, \Psi)$ be a multispinal group over $X$.
Assume that $[G, X]$ is amenable.
Then the following conditions are equivalent.
\begin{enumerate}
\renewcommand{\labelenumi}{\textup{(\roman{enumi})}}
%\item
%The elements in $A$ are linearly independent in the GNS space $H_{\psi_{\min}}$.
\item
The restriction of $\psi_{\max}$ to $C^*(A)$ is faithful.
\item
The $|A| \times |A|$ matrix $[\psi(a_1^{-1} a_2)]_{a_1, a_2 \in A}$ is invertible.
\item
The universal \AL $\mO_{G_{\max}}$ is simple.
\end{enumerate}
\end{mainthm}

Under the assumption of the Main Theorem,
if one of the equivalent conditions holds,
then $\mO_{G_{\max}}$ is a Kirchberg algebra.
In addition,
one can compute $\psi(a) = \mu((X^\om)_a)$ for each $a \in A$ by solving a linear equation. 
It is not hard to check that the condition (ii) holds for the Grigorchuk group (see Example \ref{simplegri}).
Thus,
as a corollary of the Main Theorem,
we obtain a different proof of \cite[Theorem 5.22]{CE} which claims that $\mO_{G_{\max}}$ is a Kirchberg algebra for the Grigorchuk group $G$. 

\section{Preliminaries}
In this section,
we recall the definition of self-similar groups and related topics.

\subsection{Self-similar groups}
We begin with notations used in this paper. 
\begin{note}
Let $X$ be a finite set $X$ with $|X| \geq 2$. 
In this paper,
$X^*$ denotes the set of finite words over the alphabet $X$.
In other words,
$X^* = \bigsqcup_{n \in \N} X^n $
(we define $X^0 := \{ \emptyset \}$).
Write $X^\om$ for the set of unilateral infinite words over $X$.
For $w \in X^\om$ and $n \in \N$,
$w^{(n)} \in X^n $ denotes the first $n$ letters of $w$.

For a finite word $u \in X^n$,
we write $|u|$ for the length of the word $u$,
namely $|u| =n$.
Take a subset $P \subset X^\om$.
We define 
\[
uP := \{ uw : w \in P \} \subset X^\om .
\]

We identify $X^\N$ with $X^\om$.
Thus,
it is equipped with the product topology of the discrete sets $X$.
Note that $X^\om$ is homeomorphic to the Cantor space.
Write $\text{Homeo}(X^\om)$ for the group of homeomorphisms on $X^\om$.
\end{note}

\begin{defn}(\cite[Definition 2.1]{Nek})
A subgroup $G$ of $\text{Homeo}(X^\om)$ is said to be a \textit{self-similar group over $X$}
if
for every $g \in G$ and $x \in X$ there exist $h \in G $ and $v \in X $
with
\begin{equation}
g(xw) = vh(w)
\label{selfsim}
\end{equation}
for any $w \in X^\om $.
\end{defn}

In this paper, 
$G$ always denotes a self-similar group over $X$ where $X$ is a finite set with at least 2 elements.  

\begin{rem}
Using the equation (\ref{selfsim}) several times,
we see that for every $n \in \N$,
$g \in G$ and $u \in X^n$ there exist $h \in G $ and $v \in X^n $ with 
\[
g(uw) = vh(w)
\]
for any $w \in X^\om $.
A direct calculation shows that $h$ and $v$ are uniquely determined by $g$ and $u$. 
Write $h=g|_u$ and $v=g(u)$.
\end{rem}

The following finitely generated group was first introduced in \cite{Gr}.

\begin{exa}(\cite[Example 2.5]{Nek})\label{Gri}
Let $X=\{ 0, 1\}$.
Consider homeomorphisms $a, b, c, d$ on $X^\om$ given by
\[
a(0w) = 1w , \ a(1w) = 0w, 
\]
\[
 b(0w) = 0a(w), \ b(1w) = 1c(w),
\]
\[
c(0w) = 0a(w) , \ c(1w) = 1d(w), 
\]
\[
 d(0w) = 0w , \ d(1w) = 1b(w)
\]
for $w \in X^\om$.
Let $G$ be the subgroup of $\text{Homeo}(X^\om)$ generated by $a, b, c, d$.
The above relations imply that $G$ is a self-similar group.
The group $G$ is called the \textit{Grigorchuk group}. 
\end{exa}
For more details and examples,
see \cite{LR, Nek2, SS}.
We often keep the following regularity in our mind in the study of self-similar groups and their operator algebras.

\begin{defn}(\cite[Definition 4.1]{Nek})\label{defreg}
Let $G$ be a self-similar group over $X$ and fix $g \in G$.
An element $w \in X^\om$ is said to be \textit{$g$-regular} if either $g(w) \neq w$ or there exists an open neighborhood of $w$ consisting of fixed points of $g$.
Let $\ggen$ be the set of all $g$-regular points.
Write $\Ggen := \bigcap_{g \in G} \ggen$.
An element $w \in \Ggen$ is said to be \textit{$G$-regular}.
\end{defn} 

Recall notations.
Take any $u \in X^*$ and let $T_u$ be the shift map on $X^\om$ given by $w \mapsto uw$.
Let $T_u^*$ denote the local inverse map of $T_u$ defined on the range of $T_u$.
Write 
\[
\langle G, X \rangle := \{ T_{u}gT_{v}^* : u, v \in X^*, g \in G\}.
\]

\begin{defn}(\cite[Definition 9.1]{Nek})
Let $G$ be a self-similar group over $X$ and fix $f \in \langle G, X \rangle$.
An element $w \in X^\om$ is said to be \textit{$f$-regular} if either $f$ is not defined on $w$ or does not fix $w$ or there exists an open neighborhood of $w$ consisting of fixed points of $f$.
We write $(X^\om)_{f\text{-reg}}$ for the set of $f$-regular points.
Also write $\Ggen ^\text{S} := \bigcap_{f \in \langle G, X \rangle} (X^\om)_{f\text{-reg}}$.
An element in $\Ggen ^\text{S}$ is said to be \textit{strictly $G$-regular}.
\end{defn}

From now on,
we always assume that $G$ is a countable self-similar group.
The following remark tells us why we need the countability.

\begin{rem}
It is not hard to show that $\fgen$ is an open dense subset of $X^\om$ for any $f \in \langle G, X \rangle$. 
Thus, 
the Baire category theorem implies that $\Ggen ^\text{S}$ is dense in $X^\om$ by the countability of $G$.
Note that $\Ggen ^\text{S}$ is a $\langle G, X \rangle$-invariant set.
\end{rem}

For $f \in \langle G, X \rangle$, 
let $\ffix$ be the set of fixed points of $f$.
By definition,
$X^\om \backslash \fgen \subset \ffix$.

Let $\mu$ be the product measure of the uniform probability measures on $X$'s (we identify $X^\om$ with $X^{\N}$).
We often refer to the measure $\mu$ as the Bernoulli measure.
For any $u \in X^*$,
we have $\mu (uX^\om) = | X |^{-|u|}$. 
%where $| u|$ is the length of $u$ and $uX^\om := \{ w \in X^\om \mid w^{(| u | )} = u \}$.
We recall the following facts in \cite{Y}.

\begin{lem}\label{gen}\textup{(\cite[Lemma 2.10]{Y})}
Let $G$ be a countable self-similar group over $X$.
Then the following conditions are equivalent.
\begin{enumerate}
\renewcommand{\labelenumi}{\textup{(\roman{enumi})}}
\item
$\mu((X^\om)_{G\textup{-reg}}) =1$.
\item
$\mu((X^\om)_{g\textup{-reg}}) = 1$ for any $g \in G$.
\item
$\mu((X^\om)_{G\textup{-reg}} ^\textup{S})=1$.
\item
$\mu((X^\om)_{f\textup{-reg}}) =1$ for any $f \in \langle G, X \rangle$.
\end{enumerate} 
\end{lem}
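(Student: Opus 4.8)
The plan is to establish the cycle of implications $(\mathrm{i})\Rightarrow(\mathrm{ii})\Rightarrow(\mathrm{iv})\Rightarrow(\mathrm{iii})\Rightarrow(\mathrm{i})$. Three of the four arrows are soft. Since $\Ggen\subset\ggen$ for every $g\in G$, condition $(\mathrm{i})$ gives $(\mathrm{ii})$ at once. For $(\mathrm{iii})\Rightarrow(\mathrm{i})$ I would observe that an element $g\in G\subset\langle G,X\rangle$ is defined on all of $X^\om$, so the clause ``$f$ is not defined on $w$'' is vacuous and the $f$-regular set of $g$ coincides with $\ggen$; hence $\Ggen^{\mathrm S}=\bigcap_{f\in\langle G,X\rangle}\fgen\subset\bigcap_{g\in G}\ggen=\Ggen$, and full measure of the smaller set forces full measure of the larger. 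For $(\mathrm{iv})\Rightarrow(\mathrm{iii})$ I would use countability: $X$ is finite so $X^*$ is countable, and $G$ is countable by assumption, whence $\langle G,X\rangle$ is countable; the complement of $\Ggen^{\mathrm S}$ is then a countable union $\bigcup_f (X^\om\setminus\fgen)$ of $\mu$-null sets and is therefore null.

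The substantive step is $(\mathrm{ii})\Rightarrow(\mathrm{iv})$, and the first thing I would record is that every $g\in G$ preserves the Bernoulli measure $\mu$. Indeed, self-similarity forces $g$ to be an automorphism of the rooted tree $X^*$: it carries each cylinder $uX^\om$ bijectively onto $g(u)X^\om$ with $|g(u)|=|u|$, and both cylinders have measure $|X|^{-|u|}$. Fixing $f=T_ugT_v^*\in\langle G,X\rangle$, it suffices to show $\mu(X^\om\setminus\fgen)=0$. Because a non-regular point is by definition a point in $\mathrm{dom}(f)=vX^\om$ that is fixed by $f$, we have $X^\om\setminus\fgen\subset\ffix\cap vX^\om$, so it is enough to bound the measure of the fixed-point set inside the domain.

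I would then split into cases according to the relation between $u$ and $v$. If $u$ and $v$ are incomparable then $f(vw')=ug(w')$ begins with $u$ while $vw'$ begins with $v$, so $f$ has no fixed point and there is nothing to prove. If $v=us$ with $s\neq\emptyset$, a fixed point $vw'$ satisfies $ug(w')=usw'$, i.e.\ $w'\in F:=\{w':g(w')=sw'\}$; then $g(F)=sF$, and since $g$ is measure preserving, $\mu(F)=\mu(g(F))=\mu(sF)=|X|^{-|s|}\mu(F)$, which forces $\mu(F)=0$ because $|s|\geq 1$. The mirror case $u=vt$ with $t\neq\emptyset$ is handled identically, the corresponding fixed-point set $F'$ satisfying $\mu(g(F'))=|X|^{|t|}\mu(F')$ and hence $\mu(F')=0$. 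Finally, if $u=v$ then $f=T_vgT_v^*$ is the conjugate of $g$ by the homeomorphism $T_v$ onto the clopen set $vX^\om$, and a direct check shows $vw'$ is $f$-regular exactly when $w'$ is $g$-regular; hence $X^\om\setminus\fgen=v\,(X^\om\setminus\ggen)$ and $\mu(X^\om\setminus\fgen)=|X|^{-|v|}\mu(X^\om\setminus\ggen)=0$ by hypothesis $(\mathrm{ii})$.

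The main obstacle is precisely this implication $(\mathrm{ii})\Rightarrow(\mathrm{iv})$: one must control the fixed points of the partial maps $T_ugT_v^*$, which need not be conjugates of group elements. The key observation that removes the difficulty is that measure preservation together with the scaling $\mu(sP)=|X|^{-|s|}\mu(P)$ turns the self-map relation $g(F)=sF$ into a numerical contradiction unless $\mu(F)=0$; note that hypothesis $(\mathrm{ii})$ is actually used only in the diagonal case $u=v$, the off-diagonal fixed-point sets being null unconditionally.
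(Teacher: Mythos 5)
Your proof is correct. Note, however, that the paper itself does not prove this lemma --- it is quoted verbatim from \cite[Lemma~2.10]{Y} --- so there is no internal proof to compare against; your argument must stand on its own, and it does. The three soft implications (i)$\Rightarrow$(ii), (iv)$\Rightarrow$(iii) via countability of $\langle G, X\rangle$, and (iii)$\Rightarrow$(i) via the inclusion $\Ggen^{\mathrm{S}}\subset\Ggen$ are all verified correctly. The substantive step (ii)$\Rightarrow$(iv) is also sound: measure-preservation of $g$ (which follows from $g(uX^\om)=g(u)X^\om$ and uniqueness of the extension of $\mu$ from cylinders) combined with the scaling $\mu(sP)=|X|^{-|s|}\mu(P)$ kills the fixed-point sets in the off-diagonal cases $v=us$ and $u=vt$ unconditionally, since $\mu(F)=|X|^{-|s|}\mu(F)$ forces $\mu(F)=0$; the incomparable case has empty fixed-point set; and the diagonal case $u=v$ reduces by conjugation under the homeomorphism $T_v$ to hypothesis (ii). One point worth making explicit if you write this up: the sets $F$, $F'$ and $X^\om\setminus\fgen$ are Borel (they are closed, being agreement sets of continuous maps on clopen domains, respectively complements of open sets), so all the measure manipulations are legitimate. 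Your closing observation --- that (ii) is only genuinely used in the diagonal case --- is a nice structural insight that the statement itself does not reveal.
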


\begin{comment}
\begin{lem}\label{gen3}\textup{(\cite[Lemma 2.11]{Y})}
Let $G$ be a self-similar group over $X$.
Then for any $g \in G$,
the following equations hold:
%\begin{itemize}
%\item
\begin{equation}
\lim_{n \to \infty}|X|^{-n} | \{ u \in X^n  : g(u) = u \} | = \mu(\gfix).
\end{equation}
%\item
\begin{equation}
\lim_{n \to \infty}|X|^{-n} | \{ u \in X^n  : g|_u \neq e, g(u) =u \} | = \mu(X^\om \backslash (X^\om)_{g\textup{-reg}}).
\end{equation}
%\end{itemize}
\end{lem}
\end{comment}

\begin{thm}\textup{(\cite[Theorem 2.12]{Y})}
For any countable self-similar group $G$ over $X$,
one has either $\mu((X^\om)_{G\textup{-reg}}) =1$ or $\mu((X^\om)_{G\textup{-reg}}) =0$.
\end{thm}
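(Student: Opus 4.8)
The plan is to derive the dichotomy from the ergodicity of the one-sided Bernoulli shift. Let $\si \colon X^\om \to X^\om$ be the shift $\si(xw) = w$ (for $x \in X$, $w \in X^\om$); it preserves $\mu$ and is ergodic, these being classical facts about the one-sided Bernoulli shift. Set $B := X^\om \setminus \Ggen$, so that the assertion becomes $\mu(B) \in \{0,1\}$. My strategy is to prove that $B$ is forward-invariant, i.e. $\si(B) \subseteq B$, and then to feed this into the ergodicity of $\si$.

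The heart of the argument is a self-similar description of non-regularity. Fix $g \in G$ and $x \in X$, and write $w = xw'$. Because $g(xw') = g(x)\, g|_x(w')$, if $g(x) \neq x$ then $g(w) \neq w$ and $w$ is automatically $g$-regular. If instead $g(x) = x$, then $g$ restricts to the self-homeomorphism $xu \mapsto x\, g|_x(u)$ of the clopen cylinder $xX^\om$; transporting the definition of regularity through the homeomorphism $u \mapsto xu$ shows that $w = xw'$ is $g$-regular if and only if $w'$ is $g|_x$-regular. In either case, if $w$ fails to be $g$-regular then necessarily $g(x) = x$ and $w'$ fails to be $g|_x$-regular. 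Hence if $w = xw' \in B$, witnessed by some $g$, then $\si(w) = w'$ fails to be $g|_x$-regular, so $\si(w) \in B$; this is exactly $\si(B) \subseteq B$.

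Before concluding I would confirm that $B$ is measurable. For fixed $g$, the complement $X^\om \setminus \ggen$ equals the intersection of the closed set $\{w : g(w) = w\}$ with $\bigcap_{n \in \N} \{w : g|_{w^{(n)}} \neq e\}$, and each set $\{w : g|_{w^{(n)}} \neq e\}$ is clopen as it depends only on the prefix $w^{(n)}$; thus $X^\om \setminus \ggen$ is closed, and since $G$ is countable $B = \bigcup_{g \in G}(X^\om \setminus \ggen)$ is Borel. Now $\si$ being measure-preserving gives $\mu(\si^{-1}(B)) = \mu(B)$, while $\si(B) \subseteq B$ is equivalent to $B \subseteq \si^{-1}(B)$; together these force $\mu(\si^{-1}(B) \setminus B) = 0$, so $B$ is $\si$-invariant modulo a null set. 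Ergodicity then yields $\mu(B) \in \{0,1\}$, and therefore $\mu(\Ggen) = 1 - \mu(B) \in \{0,1\}$.

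The main obstacle is the first step: translating the topological definition of regularity (a fixed point that is not interior to the fixed-point set) into the clean recursive statement that singularity of $xw'$ descends to singularity of $w'$ under $g|_x$. Once $\si(B) \subseteq B$ is in hand, the remaining measure-theoretic bookkeeping is routine, and one could alternatively route the invariance through Lemma \ref{gen} together with finite-level fixed-word counts rather than the direct cylinder computation.
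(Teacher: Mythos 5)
The paper never proves this statement itself: it is imported verbatim from \cite[Theorem 2.12]{Y}, so there is no in-paper argument to compare yours against; what can be judged is whether your proposal stands on its own, and it does. The three ingredients all check out. First, the descent of non-regularity: if $w = xw'$ fails to be $g$-regular, then $g(w) = w$ forces $g(x) = x$ and $g|_x(w') = w'$, and if $w'$ had an open neighborhood $U$ consisting of $g|_x$-fixed points, then $xU$ would be an open neighborhood of $w$ consisting of $g$-fixed points (since $g(xu) = x\,g|_x(u) = xu$ for $u \in U$); as $g|_x \in G$ by self-similarity, this gives exactly $B \subseteq \si^{-1}(B)$ for $B = X^\om \setminus \Ggen$. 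Second, measurability: your characterization $X^\om \setminus \ggen = \{w : g(w)=w\} \cap \bigcap_{n \in \N}\{w : g|_{w^{(n)}} \neq e\}$ is correct, because $g|_{w^{(n)}} = e$ for some $n$ is precisely the statement that the cylinder $w^{(n)}X^\om$ is pointwise fixed by $g$, and each set $\{w : g|_{w^{(n)}} \neq e\}$ is a finite union of level-$n$ cylinders, hence clopen; countability of $G$ then makes $B$ Borel, and this is the only place countability enters, as it should be. Third, the ergodic-theoretic finish is the standard almost-invariant-set formulation of ergodicity: $B \subseteq \si^{-1}(B)$ and $\mu(\si^{-1}(B)) = \mu(B)$ give $\mu(\si^{-1}(B)\,\triangle\, B)=0$, whence $\mu(B) \in \{0,1\}$. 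One streamlining worth noting: since the one-sided Bernoulli shift is in fact mixing, you can skip the almost-invariance bookkeeping altogether --- $B \subseteq \si^{-n}(B)$ for every $n$ gives $\mu(B) = \mu(B \cap \si^{-n}(B)) \to \mu(B)^2$, so $\mu(B) \in \{0,1\}$ immediately. Your argument also makes transparent where self-similarity (the restriction $g|_x \in G$) and countability are each used, which the bare citation in the paper conceals.
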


For an example of a self-similar group $G$ over $X$ with $\Ggen =1$,  
we recall the following property.  

\begin{defn}\label{defcont}(\cite[Definition 2.2]{Nek2})
A self-similar group $G$ over $X$ is said to be \textit{contracting} if there exists a finite set $\mN \subset G$ with the following condition:

For any $g \in G$ there exists $n \in \N$ satisfying $g|_v \in \mN$ for any $v \in X^*$ with $|v| >n$.  

If a self-similar group $G$ is contracting,
the smallest finite set of $G$ satisfying the above condition is said to be the \textit{nucleus of $G$}.
\end{defn} 

The proposition below is included in the proof of \cite[Theorem 7.3 (3)]{LR}

\begin{prop}\label{cont}
Let $G$ be a contracting countable self-similar group over $X$.
Then we have
$\mu((X^\om)_{G\textup{-reg}})=1$.
\end{prop}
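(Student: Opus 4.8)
The plan is to reduce to a single-element statement and then run a self-similar recursion controlled by the nucleus. By the implication (ii)$\Rightarrow$(i) of Lemma \ref{gen}, it suffices to prove $\mu(\ggen)=1$ for every $g\in G$; equivalently, writing $p(h):=\mu\bigl(X^\om\backslash (X^\om)_{h\textrm{-reg}}\bigr)$ for $h\in G$, it suffices to show $p(g)=0$ for all $g$. The scheme is to set up a recursion for $p$ under the map $h\mapsto h|_x$ and then invoke the finiteness of the nucleus $\mN$ supplied by the contracting hypothesis.

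First I would record a first-letter decomposition of the non-regular set. Fixing $g$ and writing a point as $w=xw'$ with $x\in X$, one sees that if $g(x)\neq x$ then $g(w)\neq w$, so $w$ is $g$-regular; while if $g(x)=x$ then $g(xw')=x\,g|_x(w')$, and $w$ lies in the interior of the fixed-point set of $g$ exactly when $w'$ lies in the interior of the fixed-point set of $g|_x$. This yields
\[
X^\om\backslash \ggen=\bigsqcup_{x\in X,\ g(x)=x} x\bigl(X^\om\backslash \hxgen\bigr),
\]
and since $\mu(xP)=|X|^{-1}\mu(P)$ for the Bernoulli measure we obtain $p(g)=|X|^{-1}\sum_{x:\,g(x)=x}p(g|_x)$. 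Iterating this identity and using $p(e)=0$ (the identity fixes every point, so every point is regular) gives, for every $n\in\N$,
\[
p(g)=|X|^{-n}\!\!\sum_{u\in X^n,\ g(u)=u} p(g|_u).
\]

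The decisive step is to bound $p$ on the nucleus. As $G$ is contracting, $\mN$ is finite, so $c:=\max_{h\in\mN}p(h)$ is attained, say at $h^\ast$. Suppose $c>0$; then $h^\ast\neq e$, so there is a finite word $v$ with $h^\ast(v)\neq v$. I would choose $n$ larger than $|v|$ and beyond the level past which all restrictions of $h^\ast$ lie in $\mN$, so that $h^\ast|_u\in\mN$ for every $u\in X^n$. Every $u\in X^n$ beginning with $v$ satisfies $h^\ast(u)\neq u$, so at most $|X|^n\bigl(1-|X|^{-|v|}\bigr)$ words of length $n$ are fixed by $h^\ast$; feeding this count and $p(h^\ast|_u)\leq c$ into the displayed formula yields
\[
c=p(h^\ast)\leq |X|^{-n}\,|X|^n\bigl(1-|X|^{-|v|}\bigr)\,c=\bigl(1-|X|^{-|v|}\bigr)c<c,
\]
a contradiction. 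Hence $c=0$, i.e.\ $p$ vanishes on $\mN$. Finally, for an arbitrary $g\in G$ I would pick $n$ beyond its contracting level, so that every $g|_u$ with $|u|=n$ lies in $\mN$ and thus $p(g|_u)=0$; the displayed formula then forces $p(g)=0$, which is the claim. I expect the two delicate points to be the verification of the interior-fixed-point correspondence underlying the decomposition, and the counting estimate on $|\{u\in X^n:h^\ast(u)=u\}|$ that produces the strict inequality above.
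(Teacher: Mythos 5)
Your proof is correct. Note first that the paper does not actually prove Proposition \ref{cont}: it only remarks that the statement is contained in the proof of \cite[Theorem 7.3 (3)]{LR}, so your argument is a self-contained substitute for a citation rather than a variant of an in-paper proof. The steps all check out: the first-letter decomposition $X^\om\setminus\ggen=\bigsqcup_{x\,:\,g(x)=x}x\bigl(X^\om\setminus\hxgen\bigr)$ is valid because $w'\mapsto xw'$ is a homeomorphism onto the open cylinder $xX^\om$, so (given $g(x)=x$) the point $xw'$ has a neighborhood of $g$-fixed points exactly when $w'$ has a neighborhood of $g|_x$-fixed points; measurability is automatic since $X^\om\setminus\ggen$ is the boundary of the closed set $\gfix$; the count of $h^\ast$-fixed words holds since any $u$ with prefix $v$ is mapped to a word with prefix $h^\ast(v)\neq v$; and faithfulness of $G\subset\mathrm{Homeo}(X^\om)$ plus continuity guarantees a finite word $v$ with $h^\ast(v)\neq v$ and $|v|\geq 1$, so the loss factor $1-|X|^{-|v|}$ is strictly less than $1$. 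As for the comparison: the argument behind \cite[Theorem 7.3 (3)]{LR} runs through the counting quantities $|X|^{-n}\,|\{u\in X^n : g(u)=u,\ g|_u\neq e\}|$, whose limit in $n$ equals $\mu(X^\om\setminus\ggen)$, and establishes geometric decay of these counts via the nucleus; your proof is the measure-level counterpart, replacing that count-to-measure limit formula by the exact recursion $p(g)=|X|^{-n}\sum_{u\in X^n,\ g(u)=u}p(g|_u)$ and closing it with a maximum-attained contradiction over the finite nucleus. Both routes rest on the same two pillars (restriction recursion and finiteness of $\mN$), but yours avoids the limiting argument entirely and could be inserted verbatim where the paper cites \cite{LR}.
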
  

It is not hard to show that the Grigorchuk group is contracting.
We observe more examples of contracting self-similar groups in the next section.

\subsection{\ALs arising from self-similar groups} 
We review the \ALs introduced by Nekrashevych constructed from self-similar groups in \cite{Nek, Nek2}. 

\begin{defn}(\cite[Definition 3.1]{Nek})
Define $\mO_{G_{\max}}$ to be the universal \AL generated by $G$ (we assume that every relation in $G$ is preserved) and $\{ S_x : x \in X \}$ satisfying the following relations for any $x \in X$ and $g \in G$:
\begin{equation}
g^*g = gg^* = 1,
\end{equation}

\begin{equation}
S_x^*S_x = 1,
\label{isometry}
\end{equation}

\begin{equation}
\sum_{y \in X}S_yS_y^* = 1,
\label{Cuntz}
\end{equation}

\begin{equation}
\label{similar}
gS_x = S_{g(x)}g|_x.
\end{equation}
%where $\de_{x, y}$ denotes the Kronecker's delta. 
\end{defn}

The universal \AL $\mO_{G_{\max}}$ admits a nonzero representation.
We recall the following representations.

\begin{rem}\label{representation}
For each $w \in X^\om$,
$\langle G, X \rangle(w)$ denotes the $\langle G, X \rangle$-orbit of $w$,
i.e. 
\[
\langle G, X \rangle(w) = \{ f(w) : f \in \langle G, X \rangle, w \in \text{Dom}f \}.
\]
Let $l^2(\langle G, X \rangle(w))$ be the set of $l^2$-functions on $\langle G, X \rangle(w)$.
Write $\de_{w_1}$ for the characteristic function of $w_1 \in \langle G, X \rangle(w)$.
We naturally identify each shift map $T_x$ with an isometry on $l^2(\langle G, X \rangle(w))$ given by $T_x(\de_{w_1}) :=\de_{xw_1}$. 
Similarly,
we identify each $g \in G$ with a unitary on $l^2(\langle G, X \rangle(w))$ given by $g(\de_{w_1}) = \de_{g(w_1)}$.
Thanks to the universality of $\mO_{G_{\max}}$,
we get a representation $\pi_w$ of $\mO_{G_{\max}}$ on $l^2(\langle G, X \rangle(w))$ given by 
\[
\pi_w(S_x) =T_x, \ \pi_w(g) =g
\]
for any $x \in X, g \in G$. 
In particular,
$\mO_{G_{\max}}$ is nonzero.
\end{rem}

For $u=x_1x_2 \cdots x_n \in X^n$,
we write $S_u := S_{x_1}\cdots S_{x_n}$.
Equations (\ref{isometry}), (\ref{Cuntz}) and Remark \ref{representation} imply that $\mO_{G_{\max}}$ contains the Cuntz algebra $\mO_{|X|}$.

We recall the following result due to V. V. Nekrashevych.

\begin{thm}\textup{(\cite[Theorem 3.3]{Nek2})}\label{uniqueNek}
Let $\rh$ be a unital representation of $\mO_{G_{\max}}$ on a nonzero Hilbert space.
Then for any $w_0 \in \Ggen ^\textup{S}$ and $a \in \mO_{G_{\max}}$,
we have 
\[
\| \pi_{w_0}(a) \| \leq \| \rh(a) \|.
\]
\end{thm}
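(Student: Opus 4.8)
The plan is to establish the inequality first on the dense $*$-subalgebra spanned by $\langle G, X\rangle$ and then extend it to all of $\mO_{G_{\max}}$ by continuity of the $*$-homomorphisms $\pi_{w_0}$ and $\rho$. So I would fix $a = \sum_{i=1}^{k} c_i S_{u_i} g_i S_{v_i}^*$ together with $\varepsilon > 0$, and choose a finitely supported unit vector $\eta = \sum_{j} \lambda_j \delta_{w_j} \in l^2(\langle G, X\rangle(w_0))$ with $\|\pi_{w_0}(a)\eta\| \ge \|\pi_{w_0}(a)\| - \varepsilon$. Enlarging the index set, I would then fix a finite subset $F$ of the orbit $\langle G, X\rangle(w_0)$ containing $\supp\eta$ and every point occurring in the (finitely many) vectors $\pi_{w_0}(S_{u_i} g_i S_{v_i}^*)\delta_{w_j}$. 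The goal is to reproduce the finite configuration $\{\delta_{w'}\}_{w' \in F}$ faithfully inside the Hilbert space $H_\rho$ of $\rho$.

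The mechanism is the interplay of the Cuntz relations (\ref{isometry})--(\ref{Cuntz}) with self-similarity (\ref{similar}). Recall that $\pi_{w_0}(S_u g S_v^*)$ sends $\delta_{w_1}$ to $\delta_{u\,g(w_2)}$ when $w_1 = v w_2$ and to $0$ otherwise, so the matrix coefficients of $\pi_{w_0}(a)$ between basis vectors are governed entirely by which elements of $\langle G, X\rangle$ fix, or move, the relevant orbit points. For a word $z$ longer than all $|u_i|, |v_i|$ one evaluates $S_z^* a S_{z'}$ by repeatedly cancelling $S_{v_i}^* S_{v_i w} = S_w$ and pushing each group element outward through $g_i S_w = S_{g_i(w)}\, g_i|_w$; each such compression is either $0$ or a scalar times a single residual element $g_i|_w \in G$. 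Thus only finitely many germ group elements, acting near $w_0$, control the block of $\pi_{w_0}(a)$ supported on $F$.

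Here strict $G$-regularity does the essential work. Each $w' \in F$ is $f(w_0)$ for some $f \in \langle G, X\rangle$ defined at $w_0$, and since $w_0 \in \Ggen^{\mathrm{S}}$, any element of $\langle G, X\rangle$ that fixes a point of the orbit fixes an entire cylinder neighborhood of it, so its germ there is trivial. Passing to a prefix deep enough to separate the finitely many points of $F$ and to trivialize all the residual germs from the previous step, I would fix a unit vector $\xi \in H_\rho$ (which exists since $\rho(1) = 1 \neq 0$) and build an orthonormal family $\{\xi_{w'}\}_{w' \in F}$ out of the deep-shift vectors $\rho(S_z)\xi$ and the unitaries $\rho(g)$: orthonormality comes from $S_z^* S_{z'} = \delta_{z, z'}$ for distinct equal-length words, and the trivialized germs ensure that each partial isometry $S_{u_i} g_i S_{v_i}^*$ permutes or annihilates the $\xi_{w'}$ exactly as $\pi_{w_0}(S_{u_i} g_i S_{v_i}^*)$ does the $\delta_{w'}$. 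The assignment $V\delta_{w'} := \xi_{w'}$ is then an isometry $l^2(F) \to H_\rho$ with $\rho(a)V\eta = V\pi_{w_0}(a)\eta$, whence $\|\rho(a)\| \ge \|\rho(a)V\eta\| = \|\pi_{w_0}(a)\eta\| \ge \|\pi_{w_0}(a)\| - \varepsilon$; letting $\varepsilon \to 0$ completes the argument.

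The main obstacle is exactly the construction in the previous paragraph: arranging the deep prefixes and residual unitaries so that $\{\xi_{w'}\}$ is \emph{genuinely} orthonormal and $\rho(a)$ reproduces the precise matrix of $\pi_{w_0}(a)$, despite the length mismatches coming from $|u_i| \neq |v_i|$ and the unitary twisting by group elements. This is the only place where strict regularity is indispensable: because $[G, X]$ is in general non-Hausdorff, an element fixing $w_0$ need not fix any neighborhood, and such a ghost germ would contribute a matrix coefficient to $\pi_{w_0}(a)$ that cannot be matched inside an arbitrary $\rho$. Verifying that every germ element arising from $a$ and from the separation of $F$ can be simultaneously trivialized on one common deep cylinder is the technical heart of the proof.
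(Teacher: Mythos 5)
This theorem is not proved in the paper at all: it is imported verbatim from \cite[Theorem 3.3]{Nek2}, so the only proof to compare against is Nekrashevych's original one, and your outline reproduces essentially that argument --- reduction to the dense $*$-subalgebra $\C\langle G, X\rangle$, an $\varepsilon$-norming finitely supported vector, a finite orbit patch $F$ closed under the terms of $a$, vectors $\xi_{w'}=\rho(f_{w'}S_z)\xi$ built from a deep prefix $z$ of $w_0$ (where $f_{w'}\in\langle G, X\rangle$ sends $w_0$ to $w'$), and an intertwining isometry $V$ yielding $\|\rho(a)\|\geq\|\pi_{w_0}(a)\eta\|\geq\|\pi_{w_0}(a)\|-\varepsilon$. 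Two points in your sketch deserve sharpening. First, each $f_{w'}S_z$ reduces via the relations to the form $S_{y_{w'}}g_{w'}$ where the words $y_{w'}$ have \emph{different} lengths in general, so orthonormality of the $\xi_{w'}$ cannot be read off the equal-length relation $S_z^*S_{z'}=\delta_{z,z'}$; it rests on prefix-incomparability of the $y_{w'}$, which holds once $z$ is deep enough because $y_{w'}$ is a long prefix of $w'$ and the finitely many points $w'\in F$ are distinct. Second, you locate the technical heart in the simultaneous trivialization of finitely many germs on one common cylinder, but that part is easy (a finite intersection of neighborhoods); the genuine crux is converting pointwise triviality of a partial map into an identity in the universal algebra: if $q=S_\alpha g S_\beta^*\in\langle G, X\rangle$ fixes $zX^\om$ pointwise, one must show $|\alpha|=|\beta|$ (using that $g|_{z_0}$ is surjective, so it cannot absorb a leftover word), hence $\alpha=\beta$ and $g|_{z_0}$ is the identity homeomorphism, hence $g|_{z_0}=e$ in $G$ because $G$ sits faithfully inside $\mathrm{Homeo}(X^\om)$ --- only this gives the algebraic identity $S_z^*qS_z=1$ in $\mO_{G_{\max}}$, which is what forces an \emph{arbitrary} representation $\rho$ to respect the germ collapse that $\pi_{w_0}$ performs. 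With those two verifications supplied, your plan is a complete and correct proof.
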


The above theorem implies that the \AL $\mO_{G_\text{min}} :=  \pi_{w_0}(\mO_{G_{\max}})$
does not depend on the choice of $w_0 \in \Ggen ^\text{S}$ up to canonical isomorphisms.

We identify $\langle G, X \rangle$ with 
\[
\{ S_{u}gS_{v}^* : u, v \in X^*, g \in G\} \subset \mO_{G_{\max}}.
\]
For  each $w \in X^\om$,
the restriction of $\pi_{w}$ to $\langle G, X \rangle$ is injective.
Hence,
we also write $f$ for $\pi_{w_0}(f) \in \mO_{G_{\min}}$ where $w_0 \in \Ggen ^\text{S}$ and $f \in \langle G, X \rangle$. 
Let $\C\langle G, X \rangle$ be the $*$-subalgebra of $\mO_{G_{\max}}$ generated by $G$ and $\{S_x \}_{x \in X}$.
The self-similarity implies
\[
\C \langle G, X \rangle = \text{span} \{ S_{u}gS_{v}^* : u, v \in X^*, g \in G\}.  
\]

To understand $\mO_{G_{\min}}$, 
we observe a Hilbert bimodule.
Let $A_{G_{\min}}$ be the C$^*$-subalgebra of $\mO_{G_{\min}}$ generated by $G$.
Define a subspace $\Phi$ of $\mO_{G_{\min}}$ by 
\[
\Phi :=\left\{ \sum_{ x \in X } S_xa_x \ \middle| \ a_x \in A_{G_\text{min}} \right\}.
\]
We regard $\Phi$ as a right $A_{G_\text{min}}$-module with the basis $\{S_x\}_{x \in X}$.
Define an $A_{G_\text{min}}$-valued inner product on $\Phi$ by
\[
\langle \sum_{x \in X}S_x a_x , 
\sum_{x \in X }S_xb_x \rangle
:= \sum_{x \in X }a_x^* b_x
\]
where $a_x ,
b_x \in A_{G_\text{min}}$ for any $x \in X$. 
Consider the left action of $A_{G_{\min}}$ on $\mO_{G_{\min}}$ arising from the multiplication. 
The left action provides a left $A_{G_{\min}}$-module structure on $\Phi$.
Thus,
we obtain a Hilbert $A_{G_\text{min}}$-bimodule. %$(\Phi, \langle \cdot, \cdot \rangle, \phi)$.
We also write $\Phi$ for this Hilbert $A_{G_\text{min}}$-bimodule. 

The following definition is almost the same as \cite[Definition 6.1]{Nek}.

\begin{defn}(\cite[Definition 6.1]{Nek})
Define $\mO_{G_{\min}}'$ to be the universal \AL generated by $A_{G_{\min}}$ and $\{ S_x : x \in X \}$ satisfying equations (\ref{isometry}), (\ref{Cuntz}) and the following relation for any $a \in A_{G_{\min}}$ and $x \in X$:
\begin{equation}\label{inner}
aS_x = \sum_{y \in X}S_y\langle S_y, aS_x \rangle.
\end{equation}
Here the inner product is the $A_{G_{\min}}$-valued one defined as above.
\end{defn} 

The same argument as \cite[Theorem 8.3]{Nek} implies the simplicity of $\mO_{G_{\min}}'$.  
\begin{thm}\textup{(\cite[Theorem 8.3]{Nek})}\label{simple}
The universal \AL $\mO_{G_{\min}}'$ is unital, purely infinite and simple.
\end{thm}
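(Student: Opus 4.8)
The plan is to identify $\mO_{G_{\min}}'$ with the Cuntz--Pimsner algebra $\mO_\Phi$ of the Hilbert $A_{G_{\min}}$-bimodule $\Phi$. Relation (\ref{inner}) is precisely the Cuntz--Pimsner covariance condition expressed in the finite orthonormal basis $\{S_x\}_{x \in X}$, while (\ref{isometry}) and (\ref{Cuntz}) record that this basis is orthonormal and total; so the two universal properties coincide. Unitality is immediate, since $A_{G_{\min}}$ is unital (it contains the image of the identity of $G$) and (\ref{Cuntz}) shows that its unit is the unit of the whole algebra.

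First I would introduce the gauge action $\gamma \colon \T \acts \mO_{G_{\min}}'$ determined on generators by $\gamma_z(S_x) = z S_x$ and $\gamma_z|_{A_{G_{\min}}} = \id$, which exists by universality, together with the faithful conditional expectation $E = \int_\T \gamma_z\, dz$ onto the fixed-point algebra $F := (\mO_{G_{\min}}')^\gamma$. I would then describe $F$ as the inductive limit $\varinjlim_n F_n$, where $F_n := \overline{\mathrm{span}}\{ S_u a S_v^* : |u| = |v| = n,\ a \in A_{G_{\min}} \} \cong M_{|X|^n}(A_{G_{\min}})$ and the injective connecting embeddings $F_n \hookrightarrow F_{n+1}$ are dictated by expanding each $a$ through (\ref{inner}).

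Next I would establish simplicity in two steps. First, every nonzero ideal $I$ meets $F$: for $0 \ne a \in I$ positive, faithfulness of $E$ gives $E(a) \ne 0$, and compressing $a$ by words $S_u$ and discarding off-diagonal Fourier components (the standard Cuntz--Krieger cutting argument) produces a nonzero element of $I \cap F$. Second, the self-similar action on $A_{G_{\min}}$ is minimal---it admits no nontrivial invariant ideals---so the inductive-limit core $F$ is simple; hence $I \cap F = F$, whence $1 \in I$ and $I = \mO_{G_{\min}}'$.

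For pure infiniteness I would use that the rank of $\Phi$ is $|X| \ge 2$: the relation $1 = \sum_{x} S_x S_x^*$ exhibits the unit as properly infinite, and cutting an arbitrary nonzero positive element down into $A_{G_{\min}}$ by words $S_u$ and then invoking simplicity lets me dominate a full properly infinite projection, exactly as in Cuntz's original argument for $\mO_n$; this shows every nonzero hereditary subalgebra contains an infinite projection. The main obstacle is the minimality step: one must convert the density of the strictly $G$-regular points $\Ggen^{\text{S}}$ (furnished by the Baire category argument and the countability of $G$) and the irreducibility of the $\langle G, X \rangle$-orbits on $X^\om$ into the absence of nontrivial invariant ideals of $A_{G_{\min}}$, and one must check enough topological freeness that ideals are forced to be gauge-invariant so that the expectation $E$ controls them---the same regularity that underlies Theorem \ref{uniqueNek}.
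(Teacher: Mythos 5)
You should first note that the paper does not prove Theorem \ref{simple} at all: it is quoted from \cite[Theorem 8.3]{Nek}, and Nekrashevych's argument has a different architecture from yours. He proves simplicity and pure infiniteness in a single stroke by showing that every nonzero positive element can be compressed to an invertible one: push the element into the core by the gauge expectation, approximate by a finite sum $\sum \al_{u,v}^g S_u g S_v^*$, and compress by $S_{w^{(n)}}$ for a strictly $G$-regular point $w$ and large $n$, so that each group element appearing either moves a neighborhood of $w$ (that term dies) or fixes a neighborhood with trivial germ (that term becomes a scalar). In particular he never needs, and never claims, simplicity of the core. Your proposal instead runs the Hausdorff crossed-product template: faithful expectation, ideals meet the core, core simple. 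The expectation part is fine (averaging over a compact group is always faithful), but the two steps you label ``standard cutting'' and ``minimality'' are exactly where the entire content of the theorem sits, and the second, as stated, is not a valid inference.

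Here is the concrete gap. Every ingredient your sketch actually uses --- gauge action, faithful expectation, the core as $\varinjlim M_{|X|^n}(\,\cdot\,)$, density of $\langle G, X\rangle$-orbits, density of $\Ggen^{\textup{S}}$, effectiveness of the groupoid --- is equally available for $\mO_{G_{\max}}$, whose core is $\varinjlim M_{|X|^n}(B)$ with $B = \overline{\mathrm{span}}(G) \subset \mO_{G_{\max}}$ (take $a = g$ in relation (\ref{inner}): it becomes exactly the self-similarity relation (\ref{similar}), so $\mO_{G_{\max}}$ is the same kind of universal algebra over a bigger coefficient algebra). If your argument were valid it would prove $\mO_{G_{\max}}$ simple for every multispinal group with $[G,X]$ amenable, which this paper's second example in Section 3 (and Theorem \ref{themsec5}) shows is false; moreover in those examples the obstructing ideal is generated by degree-zero (group-algebra) elements, so the core of $\mO_{G_{\max}}$ is itself non-simple even though the dynamics is minimal and effective. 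This is the well-known non-Hausdorff phenomenon of \cite{CE, SS}: minimality plus topological freeness does not control ideals here. Consequently, the only thing that can make Theorem \ref{simple} true is the extra relations that $A_{G_{\min}}$ satisfies because it is defined inside $\pi_{w_0}(\mO_{G_{\max}})$ for $w_0 \in \Ggen^{\textup{S}}$ --- and your sketch never invokes them. Your closing sentence names this obstacle, but naming it is not closing it: converting strict $G$-regularity into ``no invariant ideals of $A_{G_{\min}}$,'' and then into simplicity of the inductive limit and gauge-invariance of arbitrary ideals, is essentially the theorem itself, and it is precisely what Nekrashevych's compression argument accomplishes directly. (A minor logical point in the same direction: the paper deduces $\mO_{G_{\min}}' \cong \mO(\Phi) \cong \mO_{G_{\min}}$ \emph{from} Theorem \ref{simple}, so if you want to open with that identification you must establish it independently by matching universal properties, not cite it.)
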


Let $\mO(\Phi)$ be the Cuntz--Pimsner algebra arising from the $A_{G_{\min}}$-bimodule $\Phi$.
In fact, 
$\mO_{G_{\min}}'$ is isomorphic to $\mO(\Phi)$ and also isomorphic to $\mO_{G_\text{min}}$.
Indeed,
using the simplicity from Theorem \ref{simple},
one can construct injective universal surjections from $\mO_{G_{\min}}'$ onto both $\mO_{G_{\min}}$ and $\mO(\Phi)$. 
Hence $\mO_{G_\text{min}}'$ is isomorphic to $\mO_{G_\text{min}}$ and $\mO(\Phi)$.

We fix a definition and some notations related to the canonical gauge actions and KMS states.

\begin{defn}\label{defKMS}
Let $D$ be a C$^*$-algebra.
Fix a group action $\al \colon \R \acts D$.
Assume that the map $t \mapsto \al_t(c)$ defined on $\R$ is norm-continuous for any $c \in D$.
An element $d \in D$ is said to be \textit{$\al$-analytic} if the map $t \mapsto \al_t(d)$ extends to an analytic map $z \mapsto \al_z(d)$ on $\C$.

In addition,
fix a nonzero real number $\be$.
A state $\vph$ on $D$ is said to be a \textit{$(\beta, \al)$-KMS state} if 
\[
\vph(d c) = \vph(c \al_{i \beta}(d) ) 
\]
for any $c \in C$ and any $\al$-analytic element $d \in C$.
\end{defn}

Under the assumption in Definition \ref{defKMS},
we write
\[
D^{\al} := \{ d \in D : \al_t(d) = d \ \text{for any} \ t \in \R \}.
\]

Let $\Ga$ be the canonical gauge action of $\R$ on $\mO_{G_{\max}}$ given by 
\[
\Ga_t(g):=g  \ \textrm{and}  \ \Ga_t(S_x):=\exp(it)S_x
\]
for $g \in G$,
$t \in \R$ and $x \in X$.
We also define the canonical gauge action $\Ga$ on $\mO_{G_{\min}}$ (we use the same symbol as there is no confusion) by
\[
\Ga_t(a):=a  \ \textrm{and}  \ \Ga_t(S_x):=\exp(it)S_x
\]
for $a \in A_{G_{\min}}$,
$t \in \R$ and $x \in X$.
Using the universalities,
one can check the above gauge actions are well-defined. 
We write
\[
\langle G, X \rangle ^\Ga : = \{ S_u g S_v^* \in \langle G, X \rangle : |u|= |v| \} \subset \langle G, X \rangle.
\]
Note that 
\[
\mO_{G_{\max}} ^\Ga= \overline{\text{span}}\langle G, X \rangle ^\Ga.
\]
Define a linear functional $\psi$ on $\C \langle G, X \rangle$ by 
\[
\psi(S_{u}gS_{v}^*) := \de_{u, v} |X|^{-|u|}\mu(\gfix)
\]
where $\de$ is the Kronecker's delta.
We have $\psi(f) = \mu(\ffix)$ for any $f \in \langle G, X \rangle$ by \cite[Remark 2.9]{Y}. 

\begin{lem}
Let $G$ be a self-similar group over $X$ with $\mu(\Ggen)=1$. 
Fix $w_0 \in \Ggen ^\text{S}$.
Then the map
\[
\psi '(\pi_{w_0}(f)) :=\psi(f) \ ; \quad f \in \langle G, X \rangle
\]
extends to a linear functional $\psi '$ on $\pi_{w_0}(\C \langle G, X \rangle)$.
\end{lem}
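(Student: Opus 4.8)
The plan is to realize $\psi$ as a pointwise limit of linear functionals that visibly factor through $\pi_{w_0}$, so that well-definedness of $\psi'$ reduces to the statement that $\psi$ annihilates $\ker(\pi_{w_0}) \cap \C\langle G, X\rangle$. This reduction is the right framing because $\pi_{w_0}$ need \emph{not} be injective on $\C\langle G, X\rangle$ (indeed, whenever the restriction of a KMS state to $C^*(A)$ is not faithful, there are elements of the group algebra in the kernel), so one cannot simply transport $\psi$ along an isomorphism. Concretely, for each $n \in \N$ I would set
\[
\psi_n(c) := |X|^{-n}\sum_{p \in X^n} \langle \pi_{w_0}(c)\de_{pw_0},\ \de_{pw_0}\rangle,
\]
where $\de_{pw_0}$ is the basis vector of $l^2(\langle G, X\rangle(w_0))$ indexed by $pw_0 = T_p(w_0) \in \langle G, X\rangle(w_0)$. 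Since $\psi_n(c)$ is a finite sum of matrix coefficients of the single operator $\pi_{w_0}(c)$, each $\psi_n$ is a linear functional that depends only on $\pi_{w_0}(c)$. Hence, if I can establish $\psi_n(c) \to \psi(c)$ for every $c \in \C\langle G, X\rangle$, then any $c$ with $\pi_{w_0}(c)=0$ satisfies $\psi(c)=\lim_n\psi_n(c)=0$, and $\psi'(\pi_{w_0}(c)):=\psi(c)$ is forced to be a well-defined linear functional, with linearity inherited from that of $\psi$ and $\pi_{w_0}$.

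By linearity it suffices to prove $\psi_n(f)\to\psi(f)$ for each monomial $f=S_ugS_v^*$. I would first compute the matrix coefficients: for $n\ge\max(|u|,|v|)$ the summand $\langle \pi_{w_0}(f)\de_{pw_0},\de_{pw_0}\rangle$ equals $1$ exactly when $pw_0$ lies in the domain of the partial map $f=T_ugT_v^*$ and is fixed by it, and equals $0$ otherwise. Therefore $\psi_n(f)=|X|^{-n}\,\#\{p\in X^n : pw_0\in\ffix\}$ is a normalized count of the fixed points of $f$ lying among the words with tail $w_0$, and the whole problem becomes the convergence of this count to $\mu(\ffix)=\psi(f)$ (the last equality being \cite[Remark 2.9]{Y}).

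For the convergence I would argue by cases. When $u=v$, writing $p=up''$ and using $g(up''w_0)=g(u)g|_{p''}(w_0)$ under the domain constraint reduces the count to
\[
\psi_n(S_ugS_u^*) = |X|^{-|u|}\cdot |X|^{-(n-|u|)}\,\#\{p''\in X^{n-|u|} : g(p'')=p'',\ g|_{p''}(w_0)=w_0\}.
\]
Here I would split the fixed words $g(p'')=p''$ according to whether $g|_{p''}=e$ or not. Those with $g|_{p''}=e$ automatically fix $w_0$ and, by the self-similar fixed-point counting of \cite{Y}, form a proportion tending to $\mu(\gfix)$; those with $g|_{p''}\neq e$ are asymptotically negligible because $\mu(X^\om\setminus\ggen)=0$, which holds since the hypothesis $\mu(\Ggen)=1$ yields $\mu(\ggen)=1$ for every $g\in G$ through Lemma \ref{gen}. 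This gives $\psi_n(f)\to|X|^{-|u|}\mu(\gfix)=\psi(f)$. When $|u|=|v|$ but $u\neq v$ there are no fixed points of the required form, so $\psi_n(f)=0=\psi(f)$. When $|u|\neq|v|$, the partial map $f$ scales the Bernoulli measure by the factor $|X|^{|v|-|u|}\neq1$ on its domain, forcing $\mu(\ffix)=0$, and the same counting shows $\#\{p\in X^n : pw_0\in\ffix\}=o(|X|^n)$, so again $\psi_n(f)\to0=\psi(f)$.

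I expect the main obstacle to be this convergence step, and specifically the control of the \emph{defective} fixed words, namely those $p''$ with $g(p'')=p''$ but $g|_{p''}\neq e$, which may or may not fix the particular tail $w_0$ and cannot be decided individually. The essential role of the regularity hypothesis is exactly to bound the density of such words, via Lemma \ref{gen}, so that they do not perturb the limit; this is the unique place where $\mu(\Ggen)=1$ is used. Once the convergence is secured, the reduction of the first paragraph makes the extension of $\psi$ to $\psi'$ immediate.
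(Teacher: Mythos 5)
Your proposal is correct, but it follows a genuinely different route from the paper's proof. The paper starts from the identity $\psi(f)=\mu(\ffix)=\int_{X^\om}\langle\de_w,\pi_w(f)\de_w\rangle\,d\mu(w)$, restricts the integral to the measure-one set $\Ggen^{\text{S}}$ (the hypothesis enters only here, via Lemma \ref{gen}), and then kills the integrand wholesale: by Theorem \ref{uniqueNek}, $\pi_{w_0}(\eta)=0$ forces $\pi_w(\eta)=0$ for \emph{every} $w\in\Ggen^{\text{S}}$, so $\psi(\eta)=\int_{\Ggen^{\text{S}}}\langle\de_w,\pi_w(\eta)\de_w\rangle\,d\mu(w)=0$. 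In other words, the paper spreads $\psi$ over the whole family of representations $\{\pi_w\}_{w\in\Ggen^{\text{S}}}$ and lets Nekrashevych's weak-containment theorem propagate the vanishing from $w_0$ to all strictly regular points. You instead never leave the single representation $\pi_{w_0}$: your $\psi_n$ discretize $\psi$ by sampling diagonal matrix coefficients at the level-$n$ points $pw_0$, and the work is shifted into an asymptotic fixed-point count, with the hypothesis $\mu(\Ggen)=1$ entering through the negligibility of the defective words $p''$ with $g(p'')=p''$ but $g|_{p''}\neq e$. What each approach buys: the paper's argument is shorter and needs no counting, but it leans on the nontrivial Theorem \ref{uniqueNek}; yours avoids that theorem entirely, at the price of invoking the counting results of \cite{Y} (Lemma 2.11 there, not restated in this paper) and of the case $|u|\neq|v|$, where ``the same counting'' is not literally the same argument. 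To close that last case cleanly, note that a fixed point $pw_0$ with $p=vp''$ forces the prefix equation $g(p'')=v''\cdot(\text{prefix of }p'')$ (writing $v=uv''$, say), which determines $p''$ letter by letter, so at most one $p\in X^n$ contributes and $\psi_n(f)\le|X|^{-n}\to 0$; with that and the measure-scaling observation $\mu(\ffix)=|X|^{|v|-|u|}\mu(\ffix)$, your reduction is complete.
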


\begin{proof}
Take any $\eta \in \C \langle G, X \rangle$ with $\pi_{w_0}(\eta)=0$.
We prove $\psi(\eta)=0$.
Write 
\[
\eta = \sum_{f \in \langle G, X \rangle} \ga_f f 
\]
where $\ga_f =0$ for all but finitely many $f \in \langle G, X \rangle$.
One has 
\[
\psi(f) = \mu(\ffix)=  \int_{X^\om}\langle \de_w, f(\de_w) \rangle d\mu(w) .
\]
for each $f$.
Each inner product above is defined on $l^2(\langle G, X \rangle (w))$.
The assumption and Lemma \ref{gen} imply $\mu(\Ggen ^\text{S}) = 1$.
Then a calculation shows
\begin{equation*}
\begin{split}
\psi(\eta) =\sum_{f \in \langle G, X \rangle} \ga_f \int_{X^\om} \langle \de_w, f(\de_w) \rangle d\mu(w)
&=\sum_{f \in \langle G, X \rangle} \ga_f \int_{\Ggen ^\text{S}} \langle \de_w, f(\de_w) \rangle d\mu(w) \\
&= \int_{\Ggen ^\text{S}} \langle \de_w, (\pi_w (\eta))(\de_w) \rangle d\mu(w) =0
\end{split}
\end{equation*}
since $\pi_w(\eta) =0$ for any $w \in \Ggen ^\text{S}$.
This proves the claim.

\end{proof}

In the case $\mu(\Ggen)=1$,
the linear functionals $\psi$ and $\psi '$ extend to unique $(\log|X|, \Ga)$-KMS states on $\mO_{G_{\max}}$ and $\mO_{G_{\min}}$,
respectively. 
See sections 6, 8 of \cite{BL} for the uniqueness of KMS states on Toeplitz type \AL associated with self-similar groups.
  
\begin{thm}\label{main2}\textup{(\cite{BL}, \cite[Theorems 3.11, 3.13]{Y})}
Assume that $G$ is a countable self-similar group over $X$ with $\mu((X^\om)_{G\textup{-reg}})=1$.
Then the following statements hold:
\begin{enumerate}
\renewcommand{\labelenumi}{\textup{(\roman{enumi})}}
\item
There exists a unique $(\log|X|, \Ga)$-KMS state $\psi_{\max}$ on $\mO_{G_{\max}}$.
\item
There exists a unique $(\log|X|, \Ga)$-KMS state $\psi_{\min}$ on $\mO_{G_{\min}}$.
\item
For any $u, v \in X^*, g \in G$,
we have
\[
\psi_{\max}(S_u g S_v^*) = \psi_{\min}(S_u g S_v^*) =\de_{u, v} \mu(\gfix).
\]
\end{enumerate}
\end{thm}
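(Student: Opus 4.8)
The plan is to establish existence and uniqueness separately, reducing the whole statement to a single recursion forced by the self-similarity relation (\ref{similar}); throughout I write $\be = \log|X|$, so that $\Ga_{i\be}(S_u) = |X|^{-|u|}S_u$.

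For existence I would first promote $\psi$ to a state. Extending the computation of the previous lemma from $\langle G, X\rangle$ to $\C\langle G, X\rangle$ by linearity yields the integral formula $\psi(a) = \int_{X^\om}\langle\de_w, \pi_w(a)\de_w\rangle\, d\mu(w)$, with $\pi_w$ as in Remark \ref{representation}. Since each $\pi_w$ is a $*$-representation of $\mO_{G_{\max}}$ one has $\|\pi_w(a)\|\le\|a\|$, so $|\psi(a)|\le\|a\|$ and $\psi(a^*a)=\int_{X^\om}\|\pi_w(a)\de_w\|^2\,d\mu(w)\ge 0$; together with $\psi(1)=1$ this makes $\psi$ a contractive positive functional, hence it extends to a state $\psi_{\max}$ on $\mO_{G_{\max}}$. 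To verify the KMS condition it suffices, since the monomials $S_u g S_v^*$ are entire $\Ga$-analytic and span a dense $*$-subalgebra, to check $\psi_{\max}(ab)=\psi_{\max}(b\,\Ga_{i\be}(a))$ on them. Expanding $S_u g S_v^*\cdot S_{u'}hS_{v'}^*$ via the prefix rule for $S_v^*S_{u'}$ and the relation (\ref{similar}), both sides collapse to a single diagonal term, and the required identity reduces to
\[
\mu(\gfix) = \frac{1}{|X|}\sum_{x\in X,\, g(x)=x}\mu(\hxfix),
\]
which is immediate from the disjoint decomposition $\gfix = \bigsqcup_{x:\,g(x)=x} x\,\hxfix$. (Existence may alternatively be quoted from \cite{BL}.)

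For uniqueness, let $\phi$ be an arbitrary $(\be,\Ga)$-KMS state. As KMS states are invariant under their defining flow, $\phi\circ\Ga_t=\phi$, so $\phi(S_ugS_v^*)=0$ whenever $|u|\ne|v|$; and for $|u|=|v|$ the KMS condition with $d=S_u$, $c=gS_v^*$ gives $\phi(S_ugS_v^*)=|X|^{-|u|}\phi(gS_v^*S_u)=\de_{u,v}|X|^{-|u|}\phi(g)$, using $S_v^*S_u=\de_{u,v}1$ for words of equal length. Thus $\phi$ is completely determined by $\phi|_G$. Expanding the unit by (\ref{Cuntz}), applying (\ref{similar}), and invoking this reduction identity gives $\phi(g)=\sum_{x\in X}\phi(S_{g(x)}g|_xS_x^*)=\frac{1}{|X|}\sum_{x:\,g(x)=x}\phi(g|_x)$, and iterating yields, for every $n$,
\[
\phi(g)=|X|^{-n}\sum_{v\in X^n,\,g(v)=v}\phi(g|_v).
\]
The main obstacle is to evaluate the limit of this expression. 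I would split the sum according to whether $g|_v=e$ or $g|_v\ne e$: the first part equals $|X|^{-n}|\{v\in X^n:g(v)=v,\ g|_v=e\}|$, while the second is bounded in modulus by $|X|^{-n}|\{v\in X^n:g(v)=v,\ g|_v\ne e\}|$, because $\phi$ is a state and the $g|_v$ are unitaries. By the counting formulas of \cite[Lemma 2.11]{Y} the second bound converges to $\mu(X^\om\setminus\ggen)$ and the first quantity to $\mu(\gfix)-\mu(X^\om\setminus\ggen)$. The standing hypothesis $\mu(\Ggen)=1$, with the equivalences of Lemma \ref{gen}, forces $\mu(X^\om\setminus\ggen)=0$ for every $g$, so letting $n\to\infty$ gives $\phi(g)=\mu(\gfix)$. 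Hence $\phi=\psi_{\max}$ on the monomials and therefore on $\mO_{G_{\max}}$, which proves (i) and the $\psi_{\max}$ half of (iii); it is exactly here that the regularity hypothesis enters decisively.

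Finally, for the statements about $\mO_{G_{\min}}$ I would pass through the quotient map $\pi_{w_0}$ with $w_0\in\Ggen^\text{S}$. The previous lemma shows $\psi_{\max}$ annihilates $\ker\pi_{w_0}$ on $\C\langle G,X\rangle$, and Theorem \ref{uniqueNek} shows that all $\pi_w$ with $w\in\Ggen^\text{S}$ induce the same norm, giving $|\psi_{\max}(a)|\le\|\pi_{w_0}(a)\|$; hence $\psi_{\max}$ descends to a well-defined state $\psi_{\min}$ on $\mO_{G_{\min}}$ with $\psi_{\max}=\psi_{\min}\circ\pi_{w_0}$. Since $\pi_{w_0}$ intertwines the two gauge actions, $\psi_{\min}$ is again a $(\be,\Ga)$-KMS state, and its uniqueness follows verbatim from the recursion above, all of whose ingredients — the relations (\ref{isometry}), (\ref{Cuntz}), (\ref{similar}) and the KMS condition — survive in the quotient.
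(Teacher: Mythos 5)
First, for context: the paper does not actually prove Theorem \ref{main2} --- it quotes it from \cite{BL} and \cite[Theorems 3.11, 3.13]{Y}, having itself only proved the preceding lemma (well-definedness of $\psi'$ on $\pi_{w_0}(\C\langle G, X\rangle)$). So your proposal has to stand on its own. Its architecture is the right one, and indeed the one used in the cited sources: flow-invariance plus the KMS identity force $\phi(S_ugS_v^*)=\de_{u,v}|X|^{-|u|}\phi(g)$, iteration of the self-similarity gives $\phi(g)=|X|^{-n}\sum_{v\in X^n,\,g(v)=v}\phi(g|_v)$, and the limit is evaluated via the counting formulas of \cite[Lemma 2.11]{Y} together with $\mu(X^\om\setminus\ggen)=0$; the descent to $\mO_{G_{\min}}$ through $\pi_{w_0}$, using the paper's lemma and Theorem \ref{uniqueNek}, is also correct. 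The uniqueness halves of (i), (ii) and statement (iii) are in good shape.

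The gap is in your existence verification. For monomials $d=S_ugS_v^*$ and $c=S_{u'}hS_{v'}^*$ both sides of the KMS identity do collapse to single diagonal terms, but the identity this forces is \emph{not} your recursion; it is traciality of $g\mapsto\mu(\gfix)$ on the group. Concretely, for the degree-zero pair $d=g$, $c=h$ (where $\Ga_{i\be}(g)=g$) the KMS condition reads $\psi_{\max}(gh)=\psi_{\max}(hg)$, i.e.\ $\mu((X^\om)_{gh})=\mu((X^\om)_{hg})$, and the general case reduces to the same thing: if $u'=v\al$ and $v'=ug(\al)$ (the only situation where either side is nonzero), then $dc=S_{ug(\al)}(g|_\al h)S_{ug(\al)}^*$ while $cd=S_{u'}(hg|_\al)S_{u'}^*$, the powers of $|X|$ cancel, and one needs $\mu((X^\om)_{g|_\al h})=\mu((X^\om)_{hg|_\al})$. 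Your identity $\mu(\gfix)=|X|^{-1}\sum_{x,\,g(x)=x}\mu(\hxfix)$ is instead the linear consistency relation forced by $g=\sum_x S_{g(x)}g|_xS_x^*$; it holds automatically for your integral-formula state and is not what the KMS check requires. The missing traciality is true and easy to supply --- every $g\in G$ preserves $\mu$ because it permutes cylinders of equal length, and $h$ maps $(X^\om)_{gh}$ bijectively onto $(X^\om)_{hg}$ --- but as written your existence argument never addresses it, so it is incomplete as a self-contained proof. (Your parenthetical fallback of quoting \cite{BL} for existence does repair this, and is in effect what the paper does.)

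One further remark worth making explicit: your derivation yields $\phi(S_ugS_v^*)=\de_{u,v}|X|^{-|u|}\mu(\gfix)$, with the normalizing factor $|X|^{-|u|}$. That is the correct formula --- it matches the definition of $\psi$ in Section 2 and is forced by $\psi(S_uS_u^*)=|X|^{-|u|}$ --- whereas statement (iii) as printed omits the factor; your proof silently corrects what is evidently a typo in the statement.
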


The uniqueness in Theorem \ref{main2} is not used in the next section.
  
\section{Groupoid approaches and simplicity of $\mO_{G_{\max }}$} 
In this section, 
we observe the groupoids arising from self-simlar groups and their groupoid C$^*$-algebras. 
For some contracting self-similar groups,
we discuss the simplicity of $\mO_{G_{\max }}$ through the groupoid C$^*$-algebras.

\subsection{Groupoids arising from self-similar groups}  
 
As the first part of this section, we recall the definitions and fix notations.
A small category $\mG$ whose any morphism has the inverse is said to be a \textit{groupoid}.
We identify a groupoid $\mG$ with the set of morphisms and identify the objects with the identity morphisms on them. 
We write the set of objects $\mG^{(0)} \subset \mG$ and refer to it as the \textit{unit space of $\mG$}.
Define two maps $s, r$ from $\mG$ onto $\mG^{(0)}$ by
\[
s(\ga) = \ga^{-1} \ga \ \text{and} \ 
r(\ga) = \ga \ga^{-1}
\]
for $\ga \in \mG$.
The map $s$ is called the \textit{source map} and $r$ is called the \textit{range map}.

A \textit{topological groupoid} is a groupoid with a topology such that the multiplication operator and the inverse operator are continuous.
The source and range maps are continuous on any topological groupoid.
A topological groupoid is said to be \textit{\'{e}tale} (or $r$-discrete) if the unit space is locally compact with respect to the relative topology of $\mG$ and the source map is a local homeomorphism.
 
A groupoid of germs of local homeomorphisms on a topological space is a good example of a groupoid.
%The groupoids of germs are defined in more general case. 
We observe the groupoids of germs arising from self-similar groups. 
Such groupoids have been introduced in \cite{Nek}.

Let $(\langle G, X \rangle \times X^{\om})'$ be the subset of $\langle G, X \rangle \times X^{\om}$ given by 
\[
(\langle G, X \rangle \times X^{\om})' := \{ (f, w) \in \langle G, X \rangle \times X^{\om} : w \in \text{Dom} f \}
\] 
We define an equivalence relation on $(\langle G, X \rangle \times X^{\om})'$ as follows.
Pairs $(f_1, w_1), (f_2, w_2)$ in $(\langle G, X \rangle \times X^{\om})'$ are equivalent if $w_1 = w_2$ and $f_1 = f_2$ on a neighborhood of $w_1 = w_2$.
The quotient set is denoted by $[G, X]$. 
We write $[f, w]$ for the equivalence class represented by $(f, w)$.

The set $[G, X]$ forms a groupoid under the following operations.
%\begin{itemize}
%\item
The multiplication is given by
\[
[f_1, w_1] \cdot [f_2, w_2] := [f_1f_2, w_2]
\]
when $w_1 = f_2(w_2)$.
The inverse is given by
\[
[f, w]^{-1} := [f^{-1}, f(w) ].
\] 
We also equip $[G, X]$ with the topology generated by sets of the form:
\[
\mU_{U, f} = \{ [f, w] : w \in U \}
\]
where $f \in \langle G, X \rangle$ and $U$ is an open subset of $\text{Dom}f$.
It is not hard to check that $[G, X]$ is \'{e}tale.

The topological groupoid $[G, X]$ might not be Hausdorff.
For example,
let $X = \{0, 1\}, G = \langle a, b, c, d \rangle$ be the Grigorchuk group (see Example \ref{Gri} for definition).  
Define 
\[
1^{\infty} :=11111 \cdots \in X^\om .
\] 
We write $e$ for the unit of $G$.
Then $[e, 1^\infty] \neq [d, 1^\infty]$ by definition but no open sets separate them.
To check this,
note that $d = e$ on $1^{3m}0X^\om$ for any nonnegative integer $m$.
For any open neighborhood $U, V$ of $1^\infty$,
there exists $n \in \N$ with
$1^{3n} X^\om \subset U$ and $1^{3n} X^\om \subset V$.
We have
\[
\mU_{1^{3n} 0X^\om, e} \subset \mU_{1^{3n} X^\om, e} \subset \mU_{U, e}
\]
and
\[
\mU_{1^{3n} 0X^\om, d} \subset \mU_{1^{3n} X^\om, d} \subset \mU_{V, d}.
\]
In addition,
we also have $\mU_{1^{3n} 0X^\om, e} = \mU_{1^{3n} 0X^\om, d} \neq \emptyset$.
Therefore, 
$\mU_{U, e} \cap \mU_{V, d} \neq \emptyset$.
This proves the claim.

Next we review the groupoid C$^*$-algebra.
In this context,
one often assumes that groupoids are Hausdorff.
In this paper,
however, 
we do NOT assume the Hausdorffness to treat examples in subsection 3.2.
The definitions of groupoid \ALs without Hausdorffness are introduced by A. Connes in \cite{C1}.

We again consider a (not necessarily Hausdorff) \'{e}tale groupoid $\mG$.
Let $\fU \subset \mG$ be an open Hausdorff subset.
The set of continuous functions on $\fU$ with compact support is denoted by $C_c(\fU)$.
Let $\text{Funct}(\mG)$ be the set of all functions on $\mG$.
For $\eta \in C_c(\fU), \ga \notin \fU$,
we define $\eta(\ga)=0$.
This gives an embedding of $C_c(\fU)$ into $\text{Funct}(\mG)$.
We then define  
\[
C(\mG) := \text{span} \bigcup_{\fU} C_c(\fU) \subset \text{Funct}(\mG) 
\]
where the union is taken over all Hausdorff open subsets $\fU$ of $\mG$. 
Note that a function in $C(\mG)$ might not be continuous on $\mG$.
   
We define the multiplication and involution operators on $C(\mG)$ by
\[
\eta^{*}(\ga) := \overline{\eta(\ga^{-1})}, \quad (\eta_1 * \eta_2)(\ga) := \sum_{\ga_1\ga_2 = \ga} \eta_1(\ga_1)\eta_2(\ga_2)
\]
for any $\eta, \eta_1, \eta_2 \in C(\mG)$.
To introduce a C$^*$-norm on $C(\mG)$,
we fix the following notation.
Let $\al \in \mG^{(0)}$ and define
\[
\mG_\al := \{ \ga \in \mG : s(\ga) = \al \}.
\]
Consider the $*$-representation $\la_\al$ of $C(\mG)$ on $l^2(\mG_\al)$ given by
\[
(\la_\al(\eta)\zeta )(\ga) = \sum_{\ga_1\ga_2 = \ga} \eta(\ga_1)\zeta(\ga_2)
\]
for $\ga \in \mG_\al$.
It is not hard to check that %$\| \la_\al(\eta) \| \leq \| \eta \|_I$ and therefore
\[
\| \eta\|_{\text{red}} = \sup_{\al \in \mG^{(0)}} \| \la_{\al}(\eta) \|
\]
is a C$^*$-norm on $C(\mG)$. %and $\| \eta\|_{\text{red}} \leq \| \eta \|_I$ for any $\eta \in C(\mG)$.
The completion of $C(\mG)$ with respect to $\| \cdot \|_\text{red}$ is denoted by $C^*_r(\mG)$. 
The \AL $C^*_r(\mG)$ is said to be the \textit{reduced groupoid C$^*$-algebra of $\mG$}.

For $\eta \in C(\mG)$,
we define the universal norm by 
\[
\| \eta \|_u := \sup \{ \| \rho(\eta) \| : \rho \ \text{is a $*$-representation of} \ C(\mG) \ \text{on a Hilbert space} \}.
\]
%By the reduced norm,
%the universal norm is nonzero.
%In addition,
This indeed defines a C$^*$-norm on $C(\mG)$ (see \cite[Lemma 1.2.3]{Ko}).  
%It is not hard to show that $\|\cdot \|_I$ is a C$^*$-norm.
The completion of $C(\mG)$ with respect to $\| \cdot \|_u$ is denoted by $C^*(\mG)$. 
The \AL $C^*(\mG)$ is said to be the \textit{full groupoid $C^*$-algebra of $\mG$}.  

We again consider the groupoid of germs arising from a self-similar group $G$ over a finite set $X$.
The universality of $\mO_{G_{\max }}$ provides a surjection from $\mO_{G_{\max}}$ onto $C^*([G, X])$ given by
\[
\langle G, X \rangle \ni f \mapsto 1_{\mU_{\text{Dom}f, f}} \in C([G, X]) \subset C^*([G, X])
\]
where $1_{\mU_{\text{Dom}f, f}}$ is the characteristic function on $\mU_{\text{Dom}f, f}$.   
It was shown by V. V. Nekrashevych that this surjection is in fact an isomorphism.

\begin{thm}\textup{(\cite[Theorem 5.1]{Nek}, \cite[Corollary 6.4]{EP})}\label{isomNek1}
The full groupoid \AL $C^*([G, X])$ is isomorphic to $\mO_{G_{\max }}$.
\end{thm}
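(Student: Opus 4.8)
The plan is to show that the canonical surjection $\Theta\colon\mO_{G_{\max}}\to C^*([G,X])$ described just above the statement, determined by $f\mapsto 1_{\mU_{\text{Dom}f,f}}$ for $f\in\langle G,X\rangle$, is \emph{isometric}, hence injective. First I would confirm that $\Theta$ is a well-defined surjective $*$-homomorphism. By the universal property of $\mO_{G_{\max}}$ it suffices to check that the images $1_{\mU_{X^\om,g}}$ ($g\in G$) and $1_{\mU_{X^\om,T_x}}$ ($x\in X$) satisfy the defining relations (\ref{isometry})--(\ref{similar}) inside $C^*([G,X])$; these are routine convolution computations. Each $1_{\mU_{X^\om,g}}$ is a unitary and each $1_{\mU_{X^\om,T_x}}$ an isometry supported on a compact open bisection, the Cuntz relation (\ref{Cuntz}) reflects the partition $X^\om=\bigsqcup_{y\in X}yX^\om$, and (\ref{similar}) reflects the germ identity $T_{g(x)}\,g|_x=g\,T_x$. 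Surjectivity follows since the characteristic functions $1_{\mU_{uX^\om,f}}$, with $u\in X^*$ and $f\in\langle G,X\rangle$, span $C([G,X])$ and each is the image of $fS_uS_u^*\in\C\langle G,X\rangle$.

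Next I would reduce the theorem to a purely algebraic statement. Let $\kappa$ denote the restriction of $\Theta$ to the dense $*$-subalgebra $\C\langle G,X\rangle$, so that $\kappa\colon\C\langle G,X\rangle\to C([G,X])$ is a surjective $*$-algebra homomorphism onto the convolution algebra. Both $\mO_{G_{\max}}$ and $C^*([G,X])$ carry the \emph{largest} C$^*$-norm on their respective dense $*$-subalgebras, namely the supremum of $\|\rho(\cdot)\|$ over all $*$-representations $\rho$. Since any $*$-representation of one of these $*$-algebras pulls back along $\kappa$ (and, once $\kappa$ is invertible, along $\kappa^{-1}$) to a $*$-representation of the other, the moment $\kappa$ is known to be a $*$-algebra \emph{isomorphism} these two suprema coincide and $\Theta=\widehat{\kappa}$ is automatically isometric. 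Thus everything comes down to the injectivity of $\kappa$. Using relations (\ref{Cuntz}) and (\ref{similar}) I would first put an arbitrary element into the normal form $a=\sum_{|u|=|v|=N}S_u\big(\sum_{g\in F}\gamma_{u,g,v}\,g\big)S_v^*$ for a fixed $N$ and a finite $F\subset G$, and record that $a=0$ in $\mO_{G_{\max}}$ if and only if every $\gamma_{u,g,v}=0$, because the $S_u$ ($|u|=N$) are orthogonal isometries and $G$ is linearly independent in $\mO_{G_{\max}}$.

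The content is therefore to prove that $\kappa(a)=0$ forces all $\gamma_{u,g,v}=0$. Evaluating $\kappa(a)$ on germs and comparing with the representations $\pi_w$ of Remark \ref{representation}, one sees that $\kappa(a)=0$ is equivalent to the condition that, for every $w\in X^\om$ and every local germ class, the coefficients of the group elements sharing that germ at $w$ sum to zero; in particular $\pi_w(a)=0$ for all $w$. It then remains to separate the finitely many elements of $F$ by germs at a suitable base point: writing $H=\{g_1^{-1}g_2:g_1\neq g_2\in F\}\subset G\setminus\{e\}$, the locus where two elements have equal germ is the (proper) interior of a fixed-point set, and using the effectiveness of $[G,X]$ together with the density of the strictly $G$-regular points $\Ggen ^\text{S}$ (guaranteed by the Baire category theorem, as in the preliminaries) I would produce base points at which the germs $[g,\cdot]$, $g\in F$, are pairwise distinct, whence each $\gamma_{u,g,v}=0$ and $a=0$.

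The main obstacle is precisely this last step. Because $[G,X]$ is typically non-Hausdorff, distinct group elements may share a germ on a large (even covering) family of open sets, so the separation of germs, equivalently the well-definedness of the inverse of $\kappa$, cannot be read off by a naive pointwise evaluation and must be argued with genuine care. The cleanest way to organize this bookkeeping is Exel's tight-representation framework: $\langle G,X\rangle\cup\{0\}$ is an inverse semigroup, the Cuntz relation (\ref{Cuntz}) is exactly its tightness (covering) condition, and $[G,X]$ is the associated tight groupoid, so that $\mO_{G_{\max}}$ and $C^*([G,X])$ are two presentations of the same universal object; this is the route of \cite{EP}, and I would ultimately defer the non-Hausdorff separation to it.
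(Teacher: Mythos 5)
Your reduction is the right one, and it is correctly argued: since both $\mO_{G_{\max}}$ and $C^*([G,X])$ are completions of their dense $*$-subalgebras in the maximal C$^*$-norm, the theorem is equivalent to the injectivity of $\kappa\colon \C\langle G,X\rangle \to C([G,X])$. The genuine gap is in the step you yourself flag at the end: producing strictly $G$-regular base points at which the germs $[g,\cdot]$, $g\in F$, are pairwise distinct. Such points need not exist, and for the motivating example of this paper they provably do not. For the Grigorchuk group one computes from the recursions $d(0w)=0w$, $d(1w)=1b(w)$, $b(1w)=1c(w)$, $c(1w)=1d(w)$ (and the fact that $a$ has no fixed points) that $\mathrm{Fix}(d)=\bigcup_k 1^{3k}0X^\om\cup\{1^\infty\}$, $\mathrm{Fix}(c)=1\,\mathrm{Fix}(d)$, $\mathrm{Fix}(b)=11\,\mathrm{Fix}(d)$; hence the interiors of these three fixed-point sets cover $X^\om\setminus\{1^\infty\}$. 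Since $c^{-1}b=d$, $d^{-1}b=c$, $d^{-1}c=b$, at every $w\neq 1^\infty$ at least two of the four germs $[e,w],[b,w],[c,w],[d,w]$ coincide, and the unique point of $X^\om$ at which all four are pairwise distinct is $1^\infty$ --- which is exactly \emph{not} $G$-regular. So for $F=\{e,b,c,d\}$ no regular base point separates $F$, and neither effectiveness nor Baire category rescues this: the complement of $\bigcup_{h}\,\mathrm{int}\,\mathrm{Fix}(h)$, $h$ ranging over the relevant quotients, consists entirely of singular points, i.e.\ precisely the points your density argument excludes.

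Two further problems compound this. First, the fact you propose to ``record'' --- that $a=0$ in $\mO_{G_{\max}}$ iff all normal-form coefficients $\gamma_{u,g,v}$ vanish --- is not a recordable fact; linear independence of $\{S_ugS_v^*\}$ in $\mO_{G_{\max}}$ (equivalently, once the theorem is known, in $C([G,X])$) is of exactly the same nature and difficulty as the injectivity you are trying to prove, and it cannot be checked by evaluation at regular points: for the Grigorchuk group over the field with two elements the element $e+b+c+d$ maps to a nonzero function on $[G,X]$ that nevertheless vanishes at every germ over every $G$-regular point, so pointwise bookkeeping at regular points is demonstrably insufficient in the non-Hausdorff setting (over $\C$ one must instead combine the linear conditions coming from the three families of cylinders above, which is a different argument from germ separation). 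Second, your fallback of deferring ``the non-Hausdorff separation'' to \cite{EP} concedes the entire content: \cite[Corollary 6.4]{EP} \emph{is} the statement being proved. Note that the paper itself offers no independent proof either --- it quotes \cite[Theorem 5.1]{Nek} and \cite[Corollary 6.4]{EP} --- so what your proposal ultimately amounts to is a correct reformulation (injectivity of $\kappa$) followed by a citation of the result itself, with the intermediate separation argument being one that cannot be repaired as stated.
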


Thanks to the theorem,
one can identify $f$ with $1_{\mU_{\text{Dom}f, f}}$.
Moreover,
by identifying $C^*([G, X])$ with $\mO_{G_{\max}}$, 
we also consider the gauge action $\Ga$ defined in section 2 on $C^*([G, X])$ and its quotient $C^*_r([G, X])$.
Let 
\[
[G, X]^\Ga := \{ [S_u g S_v^*, w] \in [G, X] : |u| = |v| \}
\]
then the restriction of the isomorphism in Theorem \ref{isomNek1} gives the following identification.

\begin{thm}\textup{(\cite[Theorem 5.3]{Nek})}\label{isomNek2}
$C^*([G, X]^\Ga) \simeq C^*([G, X])^\Ga \simeq \mO_{G_{\max }}^\Ga$.
\end{thm}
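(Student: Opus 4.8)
The plan is to prove the two isomorphisms in turn, treating the second one first since it is essentially automatic. Under the identification of Theorem \ref{isomNek1}, the gauge action on $C^*([G,X])$ was \emph{defined} so as to match the action $\Ga$ on $\mO_{G_{\max}}$ (namely $g \mapsto g$ and $S_x \mapsto e^{it}S_x$); hence that isomorphism $C^*([G,X]) \cong \mO_{G_{\max}}$ is $\Ga$-equivariant and restricts to an isomorphism $C^*([G,X])^\Ga \cong \mO_{G_{\max}}^\Ga$ of fixed-point algebras. The real content is therefore the first isomorphism $C^*([G,X]^\Ga) \cong C^*([G,X])^\Ga$.

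First I would exhibit $[G,X]^\Ga$ as the kernel of a cocycle. Define $c \colon [G,X] \to \Z$ by $c([S_ugS_v^*, w]) := |u|-|v|$. Because $S_ugS_v^*$ is a $\Ga$-eigenvector in $\mO_{G_{\max}}$ with eigenvalue $e^{it(|u|-|v|)}$, the integer $|u|-|v|$ depends only on the element of $\langle G, X \rangle$, and being locally constant it descends to a continuous groupoid homomorphism on germs. Thus $c$ is a continuous $\Z$-valued cocycle, $[G,X]^\Ga = c^{-1}(0)$ is a clopen subgroupoid, and under the identification $f \leftrightarrow 1_{\mU_{\text{Dom}f, f}}$ the gauge action is given on $C([G,X])$ by $(\Ga_t \eta)(\ga) = e^{itc(\ga)}\eta(\ga)$.

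Next I would identify the fixed-point algebra with a closure of sections over $[G,X]^\Ga$. The action $\Ga$ is $2\pi$-periodic, so averaging yields a faithful conditional expectation $E = \frac{1}{2\pi}\int_0^{2\pi} \Ga_t \, dt$ from $C^*([G,X])$ onto $C^*([G,X])^\Ga$. On the dense $*$-subalgebra $C([G,X])$ it acts by $E(\eta) = 1_{c^{-1}(0)}\eta$, the restriction to $[G,X]^\Ga$; since $E$ is contractive and $C([G,X])$ is dense, $C^*([G,X])^\Ga = \overline{E(C([G,X]))} = \overline{C([G,X]^\Ga)}$, the closure being taken inside $C^*([G,X])$. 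The inclusion $C([G,X]^\Ga) \hookrightarrow C([G,X])$ by extension by zero is a $*$-homomorphism, so it induces a surjection $q \colon C^*([G,X]^\Ga) \twoheadrightarrow C^*([G,X])^\Ga$.

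The main obstacle is the injectivity of $q$, equivalently showing that the full groupoid norm on $C([G,X]^\Ga)$ agrees with the norm inherited from $C^*([G,X])$; the cocycle $c$ grades $C^*([G,X])$ over $\Z$, and the point is that the unit fibre of this grading is the \emph{full} algebra of the kernel groupoid. I would establish this by an induced-representation argument. Given any $*$-representation $\pi$ of $C([G,X]^\Ga)$ on a Hilbert space $H_\pi$, form the induced representation of $C([G,X])$ on $(\bigoplus_{n \in \Z} B_n) \otimes_{B_0} H_\pi$, where $B_n = \overline{C(c^{-1}(n))}$, $B_0 = \overline{C([G,X]^\Ga)}$, and a degree-$n$ element shifts the grading by $n$ while the degree-$0$ part acts, on the $n=0$ summand, exactly through $\pi$. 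Since that restriction contains $\pi$ as a subrepresentation, one gets $\|\eta\|_{C^*([G,X])} \ge \|\pi(\eta)\|$ for every $\eta \in C([G,X]^\Ga)$ and every $\pi$; taking the supremum over $\pi$ yields $\|\eta\|_u \le \|\eta\|_{C^*([G,X])}$, while the reverse inequality is automatic because the restriction of any representation of $C([G,X])$ is a representation of $C([G,X]^\Ga)$. Hence $q$ is isometric, completing the proof. Constructing and verifying this induced representation — equivalently, invoking the isometric embedding of the unit fibre of a Fell bundle over $\Z$ into its full cross-sectional C$^*$-algebra — is the technical heart of the argument.
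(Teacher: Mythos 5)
Your setup is sound as far as it goes: the second isomorphism is indeed automatic from the $\Ga$-equivariance of the isomorphism in Theorem \ref{isomNek1} (which is essentially all the paper itself says --- it states this theorem as a citation of \cite[Theorem 5.3]{Nek} rather than proving it), the cocycle $c$ is well defined on germs, $[G,X]^\Ga = c^{-1}(0)$ is clopen, the averaged expectation $E$ exists, and extension by zero induces a surjection $q \colon C^*([G,X]^\Ga) \to C^*([G,X])^\Ga$ whose injectivity is the real issue. You also correctly isolate the hard inequality: $\|\eta\|_u \le \|\eta\|_{C^*([G,X])}$ for $\eta \in C([G,X]^\Ga)$.

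However, your proof of that inequality is circular. To form $\bigl(\bigoplus_{n}B_n\bigr)\otimes_{B_0}H_\pi$ you need $H_\pi$ to be a left $B_0$-module, i.e.\ you need the representation $\pi$ of the $*$-algebra $C([G,X]^\Ga)$ to be bounded for the norm of $B_0 = \overline{C([G,X]^\Ga)} \subset C^*([G,X])$; but that boundedness, $\|\pi(\eta)\|\le\|\eta\|_{C^*([G,X])}$, is exactly the statement you are trying to prove, and if you had it there would be nothing left to do. The appeal to the Fell-bundle theorem has the same defect: the unit fibre of the Fell bundle obtained from the grading of $C^*([G,X])$ is $B_0$, not $C^*([G,X]^\Ga)$, and the fact that the unit fibre embeds isometrically in the full cross-sectional algebra says nothing about whether these two completions of $C([G,X]^\Ga)$ coincide --- which is precisely the content of the first isomorphism. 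To repair the argument you must induce algebraically: define on $C([G,X])\odot H_\pi$ the sesquilinear form $\langle a\otimes\xi, b\otimes\zeta\rangle := \langle\xi, \pi(E(a^*b))\zeta\rangle$ and prove it is positive semidefinite. That positivity is not formal, but it does hold for this groupoid because the grading is implemented by the clopen bisections $\mU_{X^\om, S_u}$: every element of $C(c^{-1}(n))$ with $n\ge 0$ is a finite sum $\sum_{|u|=n}1_{\mU_{X^\om,S_u}}*f_u$ with $f_u \in C(c^{-1}(0))$, and $1_{\mU_{X^\om,S_u}}^* * 1_{\mU_{X^\om,S_{u'}}} = \de_{u,u'}1$, which reduces the positivity of $[\pi(E(a_i^*a_j))]_{i,j}$ to that of matrices of the form $W^*W$ over $C(c^{-1}(0))$ (negative degrees are handled symmetrically, using that conjugation by these bisections defines a $*$-homomorphism into $C(c^{-1}(0))$). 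With that step supplied your induction goes through; without it, the key step is assumed rather than proven.
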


At the rest of this section,
we discuss the simplicity of $C^*([G, X])$ and $C_r^*([G, X])$ rather than $\mO_{G_{\max }}$.
If $C^*([G, X])$ and $C^*_r([G, X])$ are not canonically isomorphic, 
then $C^*([G, X])$ is not simple.
Thus, 
we only consider the case they are isomorphic.
%To obtain this isomorphism,
%we often assume the following condition named amenability.
%To mention the definition,
%we need some set-up.

%\begin{defn}
%\end{defn}  
It is a well known fact that the full and reduced \AL of an amenable groupoid are canonically isomorphic. 
One can find this fact for Hausdorff groupoids in \cite{AR} or \cite{R}.
The same argument provides the isomorphism in the non-Hausdorff case. 
We recall sufficient conditions of the amenability of $[G, X]$ in \cite{EP} and \cite{Nek}.

\begin{thm}\label{ameimpame}\textup{(\cite[Corollary 10.18]{EP})}
If a self-similar group $G$ over $X$ is amenable as a discrete group, 
then $[G, X]$ is amenable. 
\end{thm}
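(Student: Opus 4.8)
The plan is to use the degree cocycle to peel off the shift direction and reduce the amenability of $[G, X]$ to that of its core, which I then analyze by an inductive product decomposition. For $f = S_u g S_v^* \in \langle G, X \rangle$ set $\deg f := |u| - |v|$. Since each $g \in G$ is a length-preserving transformation of $X^*$, the number $\deg f$ depends only on the germ, so $\deg([f, w]) := \deg f$ is a well-defined continuous groupoid homomorphism (cocycle)
\[
\deg \colon [G, X] \to \Z,
\]
which is surjective because $\deg([S_x, w]) = 1$. Let $\mathcal{G}_0 := \deg^{-1}(0) = \{ [S_u g S_v^*, w] : |u| = |v| \}$ be the \emph{core} groupoid. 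Because $\Z$ is amenable, the permanence of amenability under extensions (\cite{AR}) reduces the whole problem to proving that $\mathcal{G}_0$ is amenable.

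Next I would exhibit $\mathcal{G}_0$ as an increasing union of open subgroupoids. For $N \in \N$ let $\mathcal{G}_0^{(N)}$ be the open subgroupoid of germs of $\{ S_u g S_v^* : |u| = |v| = N,\ g \in G \}$; that this is a subgroupoid uses $S_{v_1}^* S_{u_2} = \de_{v_1, u_2}$ for words of equal length. The identity
\[
S_u g S_v^* = \sum_{x \in X} S_{u g(x)} (g|_x) S_{vx}^*
\]
shows $\mathcal{G}_0^{(N)} \subseteq \mathcal{G}_0^{(N+1)}$, so $\mathcal{G}_0 = \bigcup_N \mathcal{G}_0^{(N)}$ is an increasing union. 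As amenability passes to increasing unions of open subgroupoids, it suffices to prove each $\mathcal{G}_0^{(N)}$ is amenable.

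Then I would decompose $\mathcal{G}_0^{(N)}$ as a product. Using the homeomorphism $X^\om \cong X^N \times X^\om$, $w \mapsto (w^{(N)}, \si^N(w))$, the germ $[S_u g S_v^*, w]$ corresponds to the datum of the pair $(u, v) \in X^N \times X^N$ together with the germ $[g, \si^N(w)]$ of the $G$-action on the tail; a direct check on composition gives
\[
\mathcal{G}_0^{(N)} \cong \mathcal{P}(X^N) \times \Sigma,
\]
where $\mathcal{P}(X^N)$ is the pair groupoid of the finite set $X^N$ and $\Sigma$ is the groupoid of germs of the action $G \acts X^\om$. The pair groupoid of a finite set is amenable, and $\Sigma$ is a continuous open quotient of the transformation groupoid $X^\om \rtimes G$, which is amenable precisely because $G$ is amenable as a discrete group (every action of an amenable group is amenable); hence $\Sigma$, and with it $\mathcal{G}_0^{(N)}$, is amenable.

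Combining these steps yields amenability of $\mathcal{G}_0^{(N)}$ for all $N$, hence of $\mathcal{G}_0$, hence of $[G, X]$. The main obstacle I anticipate is not conceptual but a matter of hypotheses: $[G, X]$, its core, and $\Sigma$ are in general non-Hausdorff, while the permanence properties invoked above (extensions, increasing unions, products, open quotients) are classically recorded for Hausdorff groupoids in \cite{AR}. The genuine work is to verify that these arguments survive in the non-Hausdorff \'{e}tale setting, the non-Hausdorffness being isolated in the germ identifications of $\Sigma$; this is exactly the care taken in \cite{EP}.
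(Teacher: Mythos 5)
The paper does not prove this statement at all: it is imported verbatim from \cite[Corollary 10.18]{EP}, so there is no internal argument to compare against, only the proof in that reference. Your outline is sound as a skeleton, and its key mechanisms do check out. The degree is indeed germ-invariant (if two triples of different degree had the same germ, then a cylinder of one length would have to coincide with a cylinder of a different length, which cannot happen), each $\mathcal{G}_0^{(N)}$ is an open subgroupoid with full unit space, the inclusion $\mathcal{G}_0^{(N)} \subseteq \mathcal{G}_0^{(N+1)}$ follows from the identity you wrote, and the identification $\mathcal{G}_0^{(N)} \cong \mathcal{P}(X^N) \times \Sigma$ is correct: the composability conditions on the two sides ($v_1 = u_2$ together with $\si^N(w_1) = g_2(\si^N(w_2))$) match, as do the products. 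The transformation groupoid $X^\om \rtimes G$ is amenable because $G$ is, as you say.

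The weak point is attributional but not empty: two of the permanence properties you charge to \cite{AR} are not recorded there in the form you need, even for Hausdorff groupoids. The criterion ``continuous cocycle into $\Z$ with amenable kernel implies amenable'' is folklore whose careful proof for \'{e}tale groupoids appeared only later, and --- more seriously --- ``amenability passes to continuous open quotients'' is not a classical permanence property at all; yet that is exactly the step from $X^\om \rtimes G$ to its germ quotient $\Sigma$, and it is the step that carries all of the non-Hausdorffness. The step is true: the quotient map is a surjective local homeomorphism which is the identity on units, so approximate invariant densities push forward, and the pushforward of a function supported on a bisection $\{g\} \times X^\om$ lands in $C_c(\mU_{X^\om, g})$. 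But the pushed-forward densities are then only elements of the span of compactly supported continuous functions on Hausdorff open subsets --- not continuous functions on $\Sigma$ --- so one must either formulate topological amenability in the non-Hausdorff convention and redo the permanence lemmas there, or descend to Borel/measurewise amenability and verify that this still yields the full-equals-reduced conclusion the paper actually uses. This is precisely the work you deferred in your last paragraph, and it is the genuine content of \cite[Section 10]{EP}; with it supplied, your reduction (gauge cocycle, core as an increasing union, pair groupoid times the germ groupoid of the $G$-action) gives a complete and correct proof.
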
 

For the other sufficient condition,
we recall the following definition.

\begin{defn}\textup{(\cite[Definition 2.3]{Nek})}\label{contreplame}
A self-similar group $G$ over $X$ is said to be \textit{self-replicating} if for any $x, y \in X, h \in G$ there exists $g \in G$ with $g(x) =y$ and $g|_x = h$.  
\end{defn}
 
\begin{thm}\textup{(\cite[Theorem 5.6]{Nek})}\label{ame2}
If a self-similar group $G$ over $X$ is contracting and self-replicating, 
then $[G, X]$ is amenable. 
\end{thm}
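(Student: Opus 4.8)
The plan is to establish amenability of the \'{e}tale groupoid $[G, X]$ directly, via the Reiter-type characterization of topological amenability for \'{e}tale groupoids (\cite{AR}): it suffices to produce a net $(\xi_n)_n$ of nonnegative functions $\xi_n \in C_c([G, X])$ which are approximately normalized along the source-fibres $[G, X]_w := \{ \eta \in [G, X] : s(\eta) = w \}$, i.e. $\sum_{\eta \in [G, X]_w} \xi_n(\eta) \to 1$ uniformly in $w \in X^\om$, and approximately invariant under left translation, i.e.
\[
\sup_{\ga} \; \sum_{\eta \in [G, X]_{r(\ga)}} \bigl| \xi_n(\ga^{-1}\eta) - \xi_n(\eta) \bigr| \longrightarrow 0 \quad (n \to \infty)
\]
uniformly for $\ga$ in any compact subset. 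Requiring $\xi_n \in C_c([G, X])$ is exactly what encodes continuity of the associated system of probability measures $m_n^w := \xi_n|_{[G, X]_w}$ on the (non-Hausdorff) groupoid.

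First I would reduce the invariance estimate to a finite family of germs. Since $[G, X]$ is \'{e}tale with compact unit space $X^\om$ and is generated, as a topological groupoid, by the compact clopen bisections $\mU_{X^\om, g}$ with $g \in \mN$ and $\mU_{X^\om, S_x}$ with $x \in X$ — here I use that $G$ is contracting to rewrite any germ $[g, w]$, via the self-similarity $gS_u = S_{g(u)}\,g|_u$ with $|u|$ large, as a product of shift germs and a germ of a nucleus element — the quantity in the display is subadditive under composition and symmetric under inversion. Hence it is enough to verify approximate invariance when $\ga = [g, w]$ ranges over $w \in X^\om$ and $g \in \mN$, a finite set; the shift germs contribute only the (harmless) renormalization between levels.

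Next I would build $\xi_n$ from the level-$n$ tree combinatorics, letting self-replicating-ness supply transitivity and contraction supply finiteness. For fixed $n$ I would take $\xi_n$ supported on the germs $[h, w]$ whose level-$n$ restriction data $h|_{w^{(n)}}$ lies in $\mN$, normalized to a probability measure on each fibre. The self-replicating hypothesis (Definition \ref{contreplame}) makes $G$ act transitively on $X^n$ and guarantees every prescribed restriction in $\mN$ is realized, so the support is nonempty and of cardinality constant along cylinders $w^{(n)}X^\om$; since these cylinders are clopen and the chosen supports are honest clopen bisections, $\xi_n$ is genuinely an element of $C_c([G, X])$. The verification of the invariance estimate is where contraction does the real work: left multiplication by a nucleus germ $[g, w]$ sends $[h, \cdot]$ to $[gh, \cdot]$, and $(gh)|_v = g|_{h(v)}\,h|_v$ stays in the finite set $\mN\mN$ (hence returns to $\mN$ after a bounded number of further restrictions) once $|v|$ exceeds a bound depending only on $g$. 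Thus the two supports over $r(\ga)$ agree except on a boundary indexed by finitely many short prefixes, whose proportion of the fibre tends to $0$ as $n \to \infty$ — precisely the contracting smallness underlying $\mu(\Ggen)=1$ in Proposition \ref{cont}. Averaging over the fibre then gives the display, uniformly over the finite family of nucleus germs.

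The main obstacle I anticipate is the simultaneous control of (a) genuine continuity and normalization of $w \mapsto m_n^w$ in the non-Hausdorff topology of $[G, X]$ and (b) the uniform vanishing of the boundary-to-bulk ratio. Contraction is tailor-made for (b), but it sits awkwardly with (a): at non-Hausdorff points the germs $[h, w]$ need not separate, so the naive combinatorial support may fail to assemble into a continuous bisection and the normalizing cardinality may jump, destroying both the normalization and the invariance. I expect the resolution to require choosing the supports along a carefully matched exhaustion by levels, so that the normalizing counts are literally constant on cylinders and the heuristic ``$\partial/\text{bulk} \to 0$'' becomes a rigorous uniform bound compatible with membership in $C_c([G, X])$.
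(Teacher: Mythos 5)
First, a framing remark: the paper does not prove this statement at all --- Theorem \ref{ame2} is quoted from \cite[Theorem 5.6]{Nek} --- so your proposal can only be judged on its own internal correctness, and there it has a fatal gap, precisely at the step you describe as the place where ``contraction does the real work.'' Your level-$n$ function is supported on the germs $[h,w]$ with $h|_{w^{(n)}}\in\mN$, i.e.\ on the germs $[S_ukS_v^*, vw'']$ with $u,v\in X^n$, $k\in\mN$, and you claim that left translation by a nucleus germ moves this support off itself only on a fibre-proportion tending to $0$. That is false, uniformly in $n$. Fix $\ga=[g,w_0]$ with $g\in\mN$, $w_1=g(w_0)$. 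The atoms of your function in the fibre over $w_1$ are $\eta_{v,k}=[S_{w_1^{(n)}}kS_v^*,\,v\,k^{-1}(t_1)]$ with $t_1=S^*_{w_1^{(n)}}(w_1)$, and $\ga^{-1}\eta_{v,k}=[S_{w_0^{(n)}}h_kS_v^*,\,v\,k^{-1}(t_1)]$ with $h_k=(g|_{w_0^{(n)}})^{-1}k\in\mN\mN$. This translate lies in your support if and only if the germ of $h_k$ at $k^{-1}(t_1)$ equals the germ of some \emph{single} element of $\mN$ there. Contraction only gives $h_k|_{v'}\in\mN$ for $|v'|\geq n_0$, i.e.\ the translate is a level-$(n+n_0)$ nucleus germ; it does not make it a level-$n$ one, and generically it is not one. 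Concretely, in the Grigorchuk group take $\ga=[d,1^\infty]$ and $k=a$: then $h_a=(d|_{1^n})a\in\{ba,ca,da\}$, and its germ at $a(1^\infty)=01^\infty$ is not a nucleus germ, since the only element of $\mN=\{e,a,b,c,d\}$ mapping $01^\infty$ to $1\cdots$ is $a$, and germ equality with $a$ would force $c$, $d$ or $b$ (respectively) to fix a neighborhood of $1^\infty$, which none of them does. Since membership of $\ga^{-1}\eta_{v,k}$ in the support is independent of $v$, all $|X|^n$ atoms $\{\eta_{v,a}\}_{v\in X^n}$, of total mass $1/|\mN|=1/5$, are simultaneously thrown off the support, so the Reiter sum at this single germ $\ga\in\mU_{X^\om,d}$ is at least $1/5$ for \emph{every} $n$. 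The defect is not a ``boundary'' term: mass leaving level $n$ reappears only at level $n+n_0$, where $\xi_n$ vanishes identically, and no averaging over the fibre recovers it.

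Two further points. The shift germs are not ``harmless renormalization'': $\xi_n$ is supported on germs with $|u|=|v|$, and translation by $[S_x,w]$ carries this support onto germs with $|u|-|v|=1$, disjoint from it, so that $\ell^1$-difference is essentially $2$; a correct treatment must either spread mass across values of the canonical cocycle $[S_ugS_v^*,w]\mapsto|u|-|v|$ or use the extension theorem (amenability of the kernel $[G,X]^\Ga$ plus amenability of $\Z$), and proving amenability of $[G,X]^\Ga$ is essentially the whole content of the theorem. Conversely --- and somewhat ironically --- the obstacle you single out as the main one is not an obstacle: taking $\xi_n=\frac{1}{|X|^n|\mN|}\sum_{u,v\in X^n,\,k\in\mN}1_{\mU_{vX^\om,\,S_ukS_v^*}}$ gives an honest element of $C([G,X])$ (a finite sum of indicators of compact open Hausdorff bisections) whose source-fibre sums equal $1$ exactly, because the function values automatically count the multiplicity with which germs for different $(u,k)$ collapse. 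The genuine difficulty is exactly the germ-collapse you treat as a nuisance: a correct proof must exploit the non-Hausdorff identifications $[h_k,\cdot]=[k',\cdot]$, whereas your construction never does, and it uses self-replication only to populate supports --- which is automatic, as the germs $[S_ukS_v^*,\cdot]$ lie in $[G,X]$ regardless. An argument for this theorem in which both hypotheses are decorative should itself be taken as a warning sign.
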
 

If $[G, X]$ is amenable and Hausdorff, 
then $C^*([G, X])$ is simple by standard arguments (see \cite{Nek}).
However,
there exists a self-similar group $G$ over $X$ such that $[G, X]$ is non-Hausdorff amenable groupoid but the $C^*([G, X])$ is not simple.
We observe this later.  
In the next subsection,
we consider a class of self-similar groups whose groupoids are not Hausdorff.
We discuss the simplicity of their C$^*$-algebras. 

\subsection{Multispinal groups}
From now on we consider a class of self-similar groups called multispinal groups introduced in \cite{SS}.
We first recall the construction.

Let $A, B$ be finite groups and let $B$ act freely on a finite set $X$.
Write $\text{Aut}(A)$ and $\text{Hom}(A, B)$ for the set of automorphisms of $A$ and the set of homomorphisms from $A$ to $B$, respectively. 
Consider a map $\Psi$ from $X$ to $\text{Aut}(A) \cup \text{Hom}(A, B)$. 
Define 
\[
\mA := \Psi(X) \cap \text{Aut}(A), \ \mB:=\Psi(X) \cap \text{Hom}(A, B).
\]
Set 
\[
\mB \cdot \mA := \mB \cup \bigcup_{n \geq 2} \{ \la_1\circ \la_2 \circ \cdots \circ \la_n : \la_1 \in \mB, \la_i \in \mA \ \text{for} \ 2\leq i \leq n \} \subset \text{Hom}(A, B).  
\]  
We assume $\mA \neq \emptyset, \mB \neq \emptyset$ and
\[
\bigcap_{\la \in \mB \cdot \mA} \ker \la = \{1_{A}\}.
\]  
Here $1_A$ is the unit of $A$.
We define actions of $A$ and $B$ on $X^{\om}$ by
\[
a(xw) := x(\Psi(x)(a))(w), \ b(xw) := b(x)w
\]
where $a \in A, b \in B, x \in X, w \in X^{\om}$.
The assumption implies that these two actions are faithful.
Hence,
we identify two finite groups $A, B$ with their images in $\text{Homeo}(X^\om)$, 
respectively. 
Let $G$ be the subgroup of $\text{Homeo}(X^\om)$ generated by $A$ and $B$.
The group $G$ is said to be a \textit{multispinal group} over $X$.
Let $G=G(A, B, \Psi)$ denote the multispinal group arising from two finite groups $A$, $B$ and a map $\Psi$.
%We often write $G$ for the simplification.
Put
\[
Y:= \Psi^{-1}(\textup{Hom}(A, B)) \subset X.
\]

\begin{rem}
\label{multiassume}
Let $G=G(A, B, \Psi)$ be a multispinal group over $X$.
By definition,
the multispinal group $G$ is a self-similar group.
In fact,
the multispinal group $G$ is always contracting and the nucleus coincides with 
\[
A \cup \bigcup_{y \in Y} \Psi(y)(A) \subset A \cup B. 
\]
See \cite[Proposition 7.1]{SS} for this fact.
%If we assume it then the nucleus of the action coincides with $A \cup B$.
Assume that the action of $B$ on $X$ is transitive and 
\[
\bigcup_{y \in Y} \Psi(y)(A) =B.
\]
Then $G$ is self-replicating (thus $G$ is infinite) and the nucleus of $G$ coincides with $A \cup B$ (See section 7 of \cite{SS}).
Therefore,
$[G, X]$ is amenable by Theorem \ref{ame2}.
\end{rem}

The Grigorchuk group is an example of a multispinal group. 
\begin{exa}\label{ortgri}
Let $A = \Z_2 \times \Z_2, B = X = \Z_2$.
Consider the left translation action $B \acts X$.
We define $\Psi(0) \in \text{Hom}(\Z_2 \times \Z_2, \Z_2)$ and $\Psi(1) \in \text{Aut}(\Z_2 \times \Z_2)$ to be 
\[
\Psi(0)(x, y) = y, \ \Psi(1)(x, y) = (y, x+y).
\] 
Then the multispinal group $G(A, B, \Psi)$ coincides with the Grigorchuk group generated by $a, b, c, d$ (we use the same symbols as in Example \ref{Gri}).
To see this,
it is sufficient to identify $a$ with the generator $1 \in \Z_2$ and identify
$b, c, d$ with $(0, 1), (1, 1), (1, 0) \in \Z_2 \times \Z_2$, 
respectively.
\end{exa}

From now on,
$e$ is always the unit of a multispinal group $G$.

\begin{lem}\label{lemsec5}
Let $G=G(A, B, \Psi)$ be a multispinal group over $X$.
Take $w \in X^\om, g, g' \in A\cup B \subset G$ with $g(w) = g'(w)$.
%Using a sequence $\{x_i\}_{i \in \N}$ of elements in $X$,
We write 
\[
w =x_1x_2x_3 \cdots ; \quad x_1, x_2, x_3, \ldots \in X . 
\]%(each $x_i$ is an element in $X$).
Assume that $w \in X^\om$ uses an alphabet in $Y$ and define
\[
m := \min \{ i : x_i \in Y \}.
\]  
Then $g$ and $g'$ coincide on the cylinder set $w^{(m)}X^\om$.
In particular,
we obtain
$[g, w] = [g', w]$. 
\end{lem}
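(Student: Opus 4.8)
The plan is to reduce the statement to a rigidity property of the free action of $B$ on $X$: because the action is free, an element of $B$ is completely determined by its value at a single letter, so two elements of $B$ that agree at one point of $X$ must be equal. The entire argument consists of arranging the data so that this rigidity can be applied, thereby upgrading the agreement of $g$ and $g'$ at the single point $w$ to agreement on the whole cylinder $w^{(m)}X^\om$.

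First I would set $u := w^{(m)} = x_1 \cdots x_m$ and write $w = u z_0$ with $z_0 := x_{m+1}x_{m+2}\cdots$. The crucial structural observation is that for \emph{every} $h \in A \cup B$ the restriction $h|_u$ lies in $B$. For $h \in B$ this is immediate, since elements of $B$ act only on the first letter, so $h|_{x_1} = e$ and hence $h|_u = e \in B$. For $h \in A$, iterating the defining action $a(xw') = x(\Psi(x)(a))(w')$ gives $h|_u = \Psi(x_m) \circ \cdots \circ \Psi(x_1)(h)$; by the minimality of $m$ the maps $\Psi(x_1), \ldots, \Psi(x_{m-1})$ are automorphisms of $A$, while $\Psi(x_m) \in \mathrm{Hom}(A, B)$ because $x_m \in Y$, so the composite carries $h$ into $B$. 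This is exactly where the definition of $m$ is used, and it is the step I expect to require the most care, since one must track that the restriction remains inside $A$ for the first $m-1$ levels and escapes into $B$ precisely at level $m$.

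Next I would exploit the hypothesis $g(w) = g'(w)$. Applying the self-similarity relation $h(uz) = h(u)(h|_u)(z)$ to $w = u z_0$ on both sides yields $g(u)(g|_u)(z_0) = g'(u)(g'|_u)(z_0)$; comparing the length-$m$ prefixes and the tails separately gives $g(u) = g'(u)$ together with $(g|_u)(z_0) = (g'|_u)(z_0)$. Since $g|_u$ and $g'|_u$ lie in $B$ by the previous step, they alter only the first letter of $z_0$, so the latter equation reads $(g|_u)(x_{m+1}) = (g'|_u)(x_{m+1})$. Here the freeness of the $B$-action enters: two elements of $B$ that agree at the single point $x_{m+1}$ must be equal, whence $g|_u = g'|_u$.

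Finally, with $g(u) = g'(u)$ and $g|_u = g'|_u$ in hand, for an arbitrary $z \in X^\om$ the self-similarity relation gives $g(uz) = g(u)(g|_u)(z) = g'(u)(g'|_u)(z) = g'(uz)$, so $g$ and $g'$ coincide on the cylinder $u X^\om = w^{(m)} X^\om$. As this cylinder is an open neighborhood of $w$, the two germs agree, i.e. $[g, w] = [g', w]$, which completes the argument.
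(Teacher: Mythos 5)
Your proposal is correct and takes essentially the same route as the paper's proof: both arguments rest on the two key facts that, by minimality of $m$, the section $h|_{w^{(m)}}$ of any $h \in A \cup B$ lies in $B$ (the maps $\Psi(x_1),\dots,\Psi(x_{m-1})$ being automorphisms and $\Psi(x_m)$ a homomorphism into $B$), and that the freeness of the $B$-action on $X$ lets agreement at a single letter force equality in $B$. The only difference is organizational—you treat $g$ and $g'$ symmetrically and compare $g|_{w^{(m)}}$ with $g'|_{w^{(m)}}$ directly, while the paper first disposes of the case $g \in B \setminus \{e\}$ and then reduces the case $g, g' \in A$ to comparison with the identity—but the substance is identical.
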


\begin{proof}
First we assume that $g \in B \backslash \{e \}$.
Then by $g(w) = g'(w)$, 
we have $g=g'$ (otherwise the first alphabet of $g(w)$ and $g'(w)$ do not coincide).

Second, let $g \in A$.
By the first step, we may assume $g' \in A$. 
Thus,
it is sufficient to show that $g$ and $e$ coincide on $w^{(m)}X^\om$ for $g \in A$ with $g(w) =w$. 
By definition,
$x_m\in Y$ and $x_i \notin Y$ for $1 \leq i \leq m-1$.
%we obtain a natural number $m$ and a finite word $w^{(m)} = x_1x_2 \cdots x_m \in X^m$ with   
Hence, 
\[
( \Psi(x_m) \circ \Psi(x_{m-1}) \circ \cdots \circ \Psi(x_1) )(g) \in B.
\]  
Then,
since $g(w) =w$,
we obtain
\[
g(w^{(m)}) = w^{(m)}, \
g|_{w^{(m)}} = ( \Psi(x_m) \circ \Psi(x_{m-1}) \circ \cdots \circ \Psi(x_1) ) (g) =e.
\]  
This shows the claim.  
\end{proof}

\begin{lem}\label{lemsec52}
Let $G=G(A, B, \Psi)$ be a multispinal group over $X$.
Then we have $(X \backslash Y)^\om \subset X^\om \backslash (X^\om)_{G\textup{-reg}}$. 
More specifically,
we have
\[
a(w) = a'(w), \ [a, w] \neq [a', w]
\]  
for any $w \in (X \backslash Y)^\om, a, a' \in A$ with $a \neq a'$. 
\end{lem}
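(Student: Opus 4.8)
The plan is to treat the two assertions separately, deriving the inclusion from the germ inequality. First I would record the easy equality: writing $w = x_1x_2\cdots$ with every $x_i \in X\setminus Y$, each $\Psi(x_i)$ lies in $\mathrm{Aut}(A)$, so the restrictions $a|_{w^{(n)}} = (\Psi(x_n)\circ\cdots\circ\Psi(x_1))(a)$ stay inside $A$ and never create a $B$-component. Since an element of $A$ fixes the first letter of every word, this forces $a(w^{(n)}) = w^{(n)}$ for all $n$, hence $a(w) = w$ for all $a \in A$ and in particular $a(w) = a'(w)$. The inclusion $(X\setminus Y)^\om \subset X^\om \setminus \Ggen$ would then follow from the germ inequality by taking $a' = e$: a point fixed by some nontrivial $g \in A$ but admitting no neighborhood of fixed points of $g$ is not $g$-regular, hence not $G$-regular.

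For the germ inequality I would set $g := (a')^{-1}a$, which is nontrivial since $a \neq a'$, and recall that $[a,w] = [a',w]$ exactly when $g$ fixes an entire neighborhood of $w$. So the goal becomes: inside every basic neighborhood $w^{(n)}X^\om$, exhibit a point that $g$ moves. As in the first step $g$ fixes the prefix $w^{(n)}$ and $g|_{w^{(n)}} = (\Psi(x_n)\circ\cdots\circ\Psi(x_1))(g) =: g_n \in A$, which is again nontrivial because the $\Psi(x_i)$ are automorphisms. Therefore $g(w^{(n)}v) = w^{(n)}g_n(v)$, and it suffices to find $v$ with $g_n(v) \neq v$.

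To produce such a $v$ I would route $g_n$ through $B$. Since $g_n \neq 1_A$, the standing hypothesis $\bigcap_{\la\in\mB\cdot\mA}\ker\la = \{1_A\}$ yields $\la\in\mB\cdot\mA$ with $\la(g_n)\neq 1_B$; writing $\la = \Psi(y)\circ\Psi(z_2)\circ\cdots\circ\Psi(z_k)$ with $y\in Y$ and $z_2,\dots,z_k\in X\setminus Y$, I would form the word $u' := z_kz_{k-1}\cdots z_2 y$. The rules $h|_{uv}=(h|_u)|_v$ and $h|_x=\Psi(x)(h)$ then give $g_n(u')=u'$ and $g_n|_{u'}=\la(g_n)=:\be\in B\setminus\{e\}$. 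Finally, freeness of the action $B\acts X$ forces $\be(x_0)\neq x_0$ for every $x_0\in X$, so putting $v := u'x_0x_0x_0\cdots$ gives $g_n(v)=u'\,\be(x_0)\cdots\neq v$. Then $w^{(n)}v\in w^{(n)}X^\om$ is moved by $g$, no neighborhood of $w$ consists of $g$-fixed points, and $[a,w]\neq[a',w]$.

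The genuine work sits in the last paragraph: the main obstacle is converting the algebraic nondegeneracy $\bigcap_{\la}\ker\la=\{1_A\}$ into a concrete point near $w$ that $g$ fails to fix. The delicate step is the bookkeeping that turns the composition $\la=\Psi(y)\circ\Psi(z_2)\circ\cdots\circ\Psi(z_k)$ into the correctly ordered finite word $u'=z_k\cdots z_2 y$, so that the restriction $g_n|_{u'}$ reproduces $\la(g_n)$ as an element of $B$; once the sole $Y$-letter sits at the end of $u'$, the free $B$-action moving the next letter finishes the argument.
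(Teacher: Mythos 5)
Your proof is correct and takes essentially the same route as the paper's: both arguments show that along a word all of whose letters lie in $X\setminus Y$ the restrictions $a|_{w^{(n)}} = (\Psi(x_n)\circ\cdots\circ\Psi(x_1))(a)$ remain nontrivial elements of $A$, whence $a$ fixes $w$ but fixes no neighborhood of it pointwise. The only difference is that the paper stops at $a|_{w^{(n)}}\neq e$ --- since $A\subset\text{Homeo}(X^\om)$ acts faithfully by the standing assumption $\bigcap_{\la\in\mB\cdot\mA}\ker\la=\{1_A\}$, a nontrivial element automatically moves some point of every cylinder $w^{(n)}X^\om$ --- whereas you re-derive this faithfulness explicitly by routing $g_n$ through some $\la\in\mB\cdot\mA$ with $\la(g_n)\neq 1_B$ and invoking freeness of $B\acts X$; that extra construction is sound but duplicates what the setup already guarantees.
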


\begin{proof}
It suffices to show that
\[
a(w) = w, \ [a, w] \neq [e, w]
\]  
for $w \in (X \backslash Y)^\om, a \in A \backslash \{e \}$.
%Using the elements $x_i \in X \backslash Y$,
We write
\[
w =x_1x_2x_3 \cdots ; \quad x_1, x_2, x_3, \ldots \in X.
\] 
By the definition of the action $A \acts X^\om$,
$a$ does not change $x_1$.
The assumption $w \in (X \backslash Y)^\om$ implies $x_1 \in X \backslash Y$.
Thus,
$\Psi(x_1)(a) \in A$.
This shows that $\Psi(x_1)(a)$ does not change $x_2 \in X \backslash Y$.
Repeating this,
we have $a(w) =w$.

%For any $n \in \N$,
%we write $w^{(n)} =x_1x_2\cdots x_n \in X^n$.
For each $i$,
$\Psi(x_i)$ is an automorphism on $A$. 
Therefore,
we obtain 
\[
a|_{w^{(n)}} = (\Psi(x_n) \circ \Psi(x_{n-1}) \circ \cdots \circ \Psi(x_1))(a) \neq e
\]
for any $n \in \N$.
Thus $[a, w] \neq [e, w]$.
\end{proof}

Let $G =G(A, B, \Psi)$ be a multispinal group over $X$.
Recall that $G$ is contracting \cite[Proposition 7.1]{SS}.
Hence we have a faithful state $\psi_{\min}$ on $\mO_{G_{\min}}$ by Proposition \ref{cont} and Theorem \ref{main2}.
We write 
\[
\pi_{\psi_{\min}} \colon \mO_{G_{\min}} \to B(H_{\psi_{\min}}), \quad \pi_{\psi_{\max}} \colon \mO_{G_{\max}} \to B(H_{\psi_{\max}})
\] 
for the GNS representations of $\psi_{\min}$ and $\psi_{\max}$,
respectively.
Let $w$ be an arbitrary strictly $G$-regular point.
Note that we have $\psi_{\max}(\eta)=\psi_{\min}(\pi_w(\eta))$ for any $\eta \in \mO_{G_{\max}}$.
Then
%the GNS spaces $H_{\psi_{\min}}$ and $H_{\psi_{\max}}$ coincide. 
%In addition,
we obtain a unitary $U$ from $H_{\max}$ onto $H_{\min}$ given by
\[
U(\hat{\eta}):=\widehat{\pi_w(\eta)}
\]
for $\eta \in \mO_{G_{\max}}$.
Here $\hat{\eta} \in H_{\max}$ and $\widehat{\pi_w(\eta)} \in H_{\min}$ are equivalent classes represented by $\eta$ and $\pi_w(\eta)$,
respectively.
The unitary $U$ provides a unitarily equivalence between $(\pi_{\psi_{\min}} \circ \pi_w, H_{\psi_{\min}})$ and $(\pi_{\psi_{\max}}, H_{\psi_{\max}})$. 
In addition,
$\pi_{\psi_{\min}}$ is faithful since $\mO_{G_{\min}}$ is simple by Theorem \ref{simple}.
Thus,
we have  
\[
\| \pi_w(\eta) \|= \| \pi_{\psi_{\min}}(\pi_w(\eta)) \| = \| \pi_{\psi_{\max}}(\eta) \|
\]
for any $\eta \in \mO_{G_{\max}}$.

Write $C^*(A)$ for the full group \AL of $A$.
Since $A$ is finite,
the canonical inclusion 
\[
A \to \mO_{G_{\max}}
\]
extends to an embedding 
\[
C^*(A) \to \text{span} (A) \subset \mO_{G_{\max}}.
\] 
From now on, 
we identify $C^*(A)$ with the finite dimensional C$^*$-subalgebra $\text{span} (A)$ of $\mO_{G_{\max}}$.
Note that the restriction of $\psi_{\max}$ to $\mO_{G_{\max}} ^\Ga$ induces a tracial state.

\begin{lem}\label{lininde}
Let $G=G(A, B, \Psi)$ be a multispinal group over $X$.
Assume that the restriction of $\psi_{\max}$ to $C^*(A)$ is faithful.
Then there exists an orthonormal system $\{\overline{a} \}_{a \in A}$ in $H_{\psi_{\max}}$ with $\pi_{\psi_{\max}}(a) \overline{a'} = \overline{aa'}$ for any $a, a' \in A$.
\end{lem}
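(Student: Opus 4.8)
The plan is to construct the vectors $\overline{a}$ directly from the GNS construction and verify both the orthonormality and the covariance relation using the faithfulness hypothesis. Recall that the GNS space $H_{\psi_{\max}}$ is the completion of $\mO_{G_{\max}}$ under the pre-inner product $\langle \hat\eta_1, \hat\eta_2 \rangle = \psi_{\max}(\eta_1^* \eta_2)$. For each $a \in A$, I would simply set $\overline{a} := \hat a$, the image of the group element $a \in A \subset \mO_{G_{\max}}$ in $H_{\psi_{\max}}$. The covariance relation is then immediate from the definition of the GNS representation: since $\pi_{\psi_{\max}}(a)\hat\eta = \widehat{a\eta}$ by construction, we get $\pi_{\psi_{\max}}(a)\overline{a'} = \pi_{\psi_{\max}}(a)\hat{a'} = \widehat{aa'} = \overline{aa'}$, using that $aa' \in A$ is again a single group element. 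So the covariance is essentially free.

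The substantive point is showing that $\{\overline{a}\}_{a \in A}$ is an orthonormal system; equivalently, that the Gram matrix $\left[\langle \overline{a_1}, \overline{a_2} \rangle\right]_{a_1, a_2}$ is the identity. First I would compute this Gram matrix explicitly: $\langle \overline{a_1}, \overline{a_2} \rangle = \psi_{\max}(a_1^* a_2) = \psi_{\max}(a_1^{-1} a_2) = \psi(a_1^{-1}a_2) = \mu\bigl((X^\om)_{a_1^{-1}a_2}\bigr)$, using $a^* = a^{-1}$ for the unitary $a$ and the formula $\psi_{\max}(g) = \mu(\gfix)$ from Theorem \ref{main2}. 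Thus the Gram matrix is exactly the matrix $[\psi(a_1^{-1}a_2)]_{a_1, a_2}$ appearing in condition (ii) of the Main Theorem, which in general need not be the identity. This means $\{\overline a\}$ is almost certainly \emph{not} orthonormal on the nose, so a genuine orthogonalization step is needed.

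This is where the faithfulness of $\psi_{\max}|_{C^*(A)}$ enters, and I expect it to be the crux of the argument. The key observation is that $C^*(A) = \mathrm{span}(A)$ sits inside $\mO_{G_{\max}}^\Ga$, on which $\psi_{\max}$ restricts to a tracial state, as noted just before the lemma. Faithfulness of a trace on the finite-dimensional \AL $C^*(A)$ forces the associated GNS inner product on the image of $C^*(A)$ to be a genuine (nondegenerate) inner product, so the finite-dimensional subspace $\mathrm{span}\{\overline a : a \in A\} \subset H_{\psi_{\max}}$ has dimension exactly $|A|$ and the Gram matrix is positive definite, hence invertible. What I actually want to produce, however, is an orthonormal system indexed by $A$ satisfying the covariance relation, not merely a linearly independent one. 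The cleanest route is to replace the naive vectors by the image of a suitable basis: since $C^*(A) \cong \bigoplus_\pi M_{d_\pi}(\C)$ and the faithful trace $\psi_{\max}|_{C^*(A)}$ realizes $H$ as the GNS space on which $C^*(A)$ acts by left multiplication, the left regular representation structure guarantees one may choose an orthonormal system transforming correctly under left translation by $A$. Concretely, I would diagonalize the positive-definite Gram matrix $[\psi(a_1^{-1}a_2)]$, which is a group-circulant-type matrix commuting with the left-translation action of $A$, and use the square-root of its inverse to orthonormalize $\{\hat a\}$ in an $A$-equivariant way; $A$-equivariance of this orthonormalization is precisely what preserves the relation $\pi_{\psi_{\max}}(a)\overline{a'} = \overline{aa'}$. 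The main obstacle is verifying that the orthonormalization can be done compatibly with the left $A$-action, i.e. that the change-of-basis intertwines left translation, which follows because the Gram matrix is a function of $a_1^{-1}a_2$ alone and therefore lies in the commutant of the left regular representation.
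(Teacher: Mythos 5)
Your proof is correct, and underneath it is the same construction as the paper's, but the packaging and the tools differ enough to be worth comparing. The paper applies the non-commutative Radon--Nikodym theorem to the two faithful tracial states $\tau(a)=\de_{a,e}$ and $\psi_{\max}|_{C^*(A)}$ on the finite-dimensional algebra $C^*(A)$, obtaining a positive invertible $h\in C^*(A)$ with $\tau(k)=\psi_{\max}(hk)$, and defines $\overline a:=ah^{1/2}\in C^*(A)\subset H_{\psi_{\max}}$; then covariance is free (it is just left multiplication), and the work is the short orthonormality computation $\langle\overline a,\overline e\rangle=\psi_{\max}(ha)=\tau(a)=\de_{a,e}$ (plus unitarity of $\pi_{\psi_{\max}}(a)$ to pass to general pairs). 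You run the argument in the opposite direction: starting from the naive vectors $\hat a$, whose Gram matrix $P=[\psi(a_1^{-1}a_2)]_{a_1,a_2}$ is positive definite precisely because of faithfulness, you correct by $P^{-1/2}$, so orthonormality is free and the work is the covariance, which you correctly reduce to the fact that $P$ (hence $P^{-1/2}$) commutes with left translation since its entries depend only on $a_1^{-1}a_2$. In fact the two constructions produce literally the same vectors: identifying $\C^A$ with $C^*(A)$, the operator $P$ is right multiplication by $p=\sum_{b\in A}\psi(b)\,b$ (here one uses $\psi(b)=\psi(b^{-1})$, as $b$ and $b^{-1}$ have the same fixed points), and the paper's $h$ equals $p^{-1}$, so your corrected vector $P^{-1/2}\hat a$ is exactly the paper's $ah^{1/2}$. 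What your version buys is self-containedness and slightly greater generality --- elementary finite-dimensional linear algebra in place of the citation of Takesaki, and no use of the traciality of $\psi_{\max}|_{C^*(A)}$ --- at the cost of having to verify the $A$-equivariance of the orthonormalization, which the paper's right-multiplication formulation makes automatic.
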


\begin{proof}
Let $\tau$ be the canonical tracial state on $C^*(A)$:
\[
\tau(a) = \de_{a, e}
\]
for any $a \in A$.
Here $\de$ is the Kronecker delta.
Recall that $\tau$ is faithful.
In addition,
the restriction of $\psi_{\max}$ to $C^*(A)$ is a faithful tracial state on $C^*(A)$ by assumption.
%We also write $\psi_{\max}$ for the restriction.
Note that $C^*(A)$ is finite dimensional.
Therefore, 
the non-commutative Radon--Nikodym theorem provides a positive invertible element $h$ in $C^*(A)$ with 
\[
\tau(k) =\psi_{\max}(h k )
\]
for any $k \in C^*(A)$ (see \cite[Corollary 3.3.6]{T2}).
Define
\[
\overline{a} := ah^{\frac{1}{2}}
\]
for each $a \in A$.
Note that the restriction of the canonical map $\mO_{G_{\max}} \to H_{\psi_{\max}}$ to $C^*(A)$ is injective by assumption.
We regard each $\overline{a}$ as a unit vector in $H_{\psi_{\max}}$ via the canonical map.
Clearly,
\[
\pi_{\psi_{\max}}(a) \overline{a'} = \overline{aa'}
\] 
for any $a, a' \in A$.
Considering the inner product $\langle \cdot, \cdot \rangle$ on $H_{\psi_{\max}}$,
we have
\[
\langle \overline{a}, \overline{e} \rangle
%= \psi_{\min}(h^{\frac{1}{2}}ah^{\frac{1}{2}})
=\psi_{\max}(ah)=\psi_{\max}(ha)=\tau(a)=\de_{a, e}
\]
for any $a \in A$.
This shows the claim.
\end{proof}

We fix notations for the proof of the main theorem of this section.
We observe the reduced groupoid \AL $C^*_r([G, X]^\Ga)$.
Consider the map
\[
w \mapsto [e, w]
\]
from $X^\om$ to $[G, X]^\Ga$.
It is not hard to show that the map provides a homeomorphism from $X^\om$ onto the unit space $([G, X]^\Ga)^{(0)}$.
For $[e, w] \in ([G, X]^\Ga)^{(0)}$,
we write $w$ instead of $[e, w]$.
Then the C$^*$-norm on $C^*_r([G, X]^\Ga)$ is given by 
\[
\sup_{w \in X^\om} \| \la^{\Ga}_w(\, \cdot \, ) \|.
\]
Let $\C \langle G, X \rangle ^\Ga$ be the $*$-subalgebra of $\mO_{G_{\max}}$ generated by $\langle G, X \rangle ^\Ga$.
Note that for each $w \in X^\om$,
we have
\[
([G, X]^\Ga)_w = \{ [f, w] : f \in \langle G, X \rangle ^\Ga, w \in \text{Dom}f \}. 
\]
Let $\de_{[f, w]} \in l^2(([G, X]^\Ga)_w)$ be the characteristic function of $[f, w] \in ([G, X]^\Ga)_w$.
Then we have 
\[
\la_w^\Ga(f_1) \de_{[f_2, w]} = \begin{cases}
    \de_{[f_1f_2, w]} & \text{if} \ f_2(w) \in \text{Dom}f_1 \\
    0 & \text{otherwise}
  \end{cases}
\]
for any $f_1 \in \langle G, X \rangle^\Ga$ and $[f_2, w] \in [G, X]^\Ga$.

\begin{rem}\label{remsixone}
Let $G$ be a self-similar group over $X$.
\begin{comment}
Take a finite set $F \subset \langle G, X \rangle ^\Ga$.
%Using the finiteness of $F$,
Define
\[
n_0 := \max \{ |u| : S_u g S_v^* \in F \} = \max \{ |v| : S_u g S_v^* \in F \}.
\] 
\end{comment}
For any $S_ugS_v^* \in \langle G, X \rangle ^\Ga$ with $|u| = |v| <n_0$,
we have

\begin{equation}
\label{selfsimspan}
g = g \sum_{u' \in X^{n_{0} -|u|}} S_{u'}S_{u'}^* = \sum_{u' \in X^{n_{0} -|u|}} S_{g(u')}g|_{u'} S_{u'}^*.
\end{equation}

Consider any finite sum
\begin{equation}
\label{selfsimspan2}
\eta = \sum_{g, u, v, |u| =|v|} \al_{u, v}^g S_u g S_v^* \in \C \langle G, X \rangle ^\Ga.
\end{equation}
Using (\ref{selfsimspan}),
we may assume that the sum is taken over $u, v \in X^n$ for some fixed $n \in \N$.  

Next,
we further assume that $G$ is a contracting self-similar group with the nucleus $\mN$.
Then for any finite subset $F' \subset G$,
there exists a natural number $n_1 \in \N$ satisfying $g'|_{u'} \in \mN$ for any $g' \in F'$ and $u' \in X^*$ with $|u'| >n_1$.
Thus, for any $g' \in F', n' >n_1$,
we have
\[
g' = g' \sum_{u' \in X^{n'}} S_{u'}S_{u'}^* = \sum_{u' \in X^{n'}} S_{g'(u')}g'|_{u'} S_{u'}^*.
\]
This shows that one can further assume that the sum in (\ref{selfsimspan2}) is taken over $g \in \mN, u, v \in X^n$ for some fixed (larger) $n \in \N$.
\end{rem}

For finite words $u, v \in X^*$, 
$uv$ is also a finite word.
For short, 
we temporary write
\[
u^{-1} (uv) := v.
\]
%\[
%w^{(n) \backslash (m)} := x_{m+1}x_{m+2} \cdots x_n.
%\]
Let $w \in X^\om$ and $n, m$ be natural numbers with $n > m$.
We write
\[
w^{-(m) + (n)} := (w^{(m)})^{-1} w^{(n)} \in X^{n-m}. 
\]

\begin{rem}\label{remsixtwo}
Let $G$ be a self-similar group over $X$.
Take any $w \in X^\om$. 
For any $[f, w] \in ([G, X]^\Ga)_w$,
there exist $u \in X^*, g \in G$ and $m \in \N$ with $|u|=m, f=S_ugS^* _{w^{(m)}}$.
For any natural number $n$ with $n >m$,
we have
\begin{equation}
\label{selfsimgerm}
[S_u g S_{w^{(m)}} ^*, w] = [S_ugS_{w^{(m)}}^* S_{w^{(n)}}S_{w^{(n)}}^*, w] = [S_uS_{g(w^{-(m) + (n)})}g|_{w^{-(m)+(n)}}S_{w^{(n)}} ^*, w].
\end{equation}

Assume that $G$ is a contracting self-similar group with the nucleus $\mN$.
Take finitely supported function $\xi \in l^2(([G, X]^\Ga)_w)$.
Then, 
by the equation (\ref{selfsimgerm}) and similar arguments to Remark \ref{remsixone},
one can write  
\begin{equation}
\label{selfsimgermspan}
\xi = \sum_{u' \in X^{n'}, g' \in \mN} \beta_{u'} ^{g'} \de_{[S_{u'}g'S_{w^{(n')}} ^*, w]}
\end{equation}
for some fixed natural number $n'$ and complex numbers $\beta_{u'} ^{g'}$.
Note that one can replace $n'$ to be arbitrarily large. 
\end{rem}

In the following theorem,
we assume that $[G, X]$ is amenable.
See Theorem \ref{ameimpame} and Remark \ref{multiassume} for the sufficient condition of the amenability of $[G, X]$.

\begin{thm}\label{themsec5}
Let $G=G(A, B, \Psi)$ be a multispinal group over $X$.
Assume that $[G, X]$ is amenable.
Then the following conditions are equivalent.
\begin{enumerate}
\renewcommand{\labelenumi}{\textup{(\roman{enumi})}}
\item
The restriction of $\psi_{\max}$ to $C^*(A)$ is faithful.
\item
The $|A| \times |A|$ matrix $[\psi(a_1^{-1} a_2)]_{a_1, a_2 \in A}$ is invertible.
\item
The universal \AL $\mO_{G_{\max}}$ is simple.
\end{enumerate}
\end{thm}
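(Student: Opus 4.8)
The plan is to route everything through the faithfulness of the KMS state $\psi_{\max}$. First I would record the chain
\[
\mO_{G_{\max}}\ \text{simple}\iff \pi_{w_0}\ \text{injective}\iff \pi_{\psi_{\max}}\ \text{faithful}\iff \psi_{\max}\ \text{faithful},
\]
valid for any $w_0\in\Ggen^{\text{S}}$. The first equivalence holds because $\pi_{w_0}\colon\mO_{G_{\max}}\to\mO_{G_{\min}}$ is a nonzero homomorphism onto the simple algebra $\mO_{G_{\min}}$ (Theorem \ref{simple}), so simplicity forces $\ker\pi_{w_0}=0$, while injectivity conversely gives $\mO_{G_{\max}}\cong\mO_{G_{\min}}$. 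The second uses the identity $\|\pi_{w_0}(\eta)\|=\|\pi_{\psi_{\max}}(\eta)\|$ established before Lemma \ref{lininde}, which gives $\ker\pi_{w_0}=\ker\pi_{\psi_{\max}}$; the third holds because the cyclic vector of a KMS state is separating, so faithfulness of the GNS representation is equivalent to faithfulness of the state. This chain already yields $(\mathrm{iii})\Rightarrow(\mathrm{i})$, since a faithful state restricts to a faithful state on the subalgebra $C^*(A)$.

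For $(\mathrm{i})\iff(\mathrm{ii})$ I would argue inside the finite-dimensional algebra $C^*(A)=\operatorname{span}(A)\cong\C A$. The restriction $\psi_{\max}|_{C^*(A)}$ is a state, so $(x,y)\mapsto\psi_{\max}(x^*y)$ is a positive semidefinite sesquilinear form, and its Gram matrix in the linear basis $\{a\}_{a\in A}$ is $[\psi_{\max}(a_1^*a_2)]=[\psi(a_1^{-1}a_2)]$, using $a^*=a^{-1}$ and $\psi_{\max}(g)=\mu(\gfix)=\psi(g)$ for $g\in G$. A positive functional on a finite-dimensional C$^*$-algebra is faithful precisely when this form is positive definite, and a positive semidefinite matrix is positive definite iff it is invertible; hence $(\mathrm{i})\iff(\mathrm{ii})$.

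The substance is $(\mathrm{i})\Rightarrow(\mathrm{iii})$, for which it suffices to prove $\pi_{w_0}$ injective. Its kernel is a gauge-invariant ideal, so by the faithful gauge-averaging conditional expectation $E\colon\mO_{G_{\max}}\to\mO_{G_{\max}}^\Ga$ it is enough to show $\pi_{w_0}$ is isometric on the core $\mO_{G_{\max}}^\Ga=\overline{\operatorname{span}}\langle G,X\rangle^\Ga$; by amenability of $[G,X]$ I identify the core with $C^*_r([G,X]^\Ga)$, whose norm is $\sup_{w}\|\la^\Ga_w\|$. Writing $\|\eta\|_{\min}:=\|\pi_{w_0}(\eta)\|=\|\pi_{\psi_{\max}}(\eta)\|$, I must therefore show $\|\la^\Ga_w(\eta)\|\le\|\eta\|_{\min}$ for every $w\in X^\om$ and every $\eta\in\C\langle G,X\rangle^\Ga$ (the reverse inequality is automatic, as strictly $G$-regular points furnish reduced representations). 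For $w\in\Ggen^{\text{S}}$ this is an equality, so only the non-regular fibres matter; by Lemmas \ref{lemsec5} and \ref{lemsec52} these are exactly the fibres carrying the germ isotropy $\{[a,w]:a\in A\}\cong A$, on which $\la^\Ga_w$ restricts to the left regular representation of $A$. After reducing every fibre vector to nucleus form at a fixed large level via Remarks \ref{remsixone} and \ref{remsixtwo}, I would construct an isometry $W$ from $l^2(([G,X]^\Ga)_w)$ into an amplification of the GNS space $H_{\psi_{\max}}$, sending $\de_{[S_ugS_{w^{(n)}}^*,w]}$ to a vector assembled from the orbit datum of $u$ and the vector $\overline{g}$ of Lemma \ref{lininde}, and intertwining $\la^\Ga_w$ with an amplification of $\pi_{\psi_{\max}}$. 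Condition $(\mathrm{i})$ enters precisely here: the orthonormal system $\{\overline{a}\}_{a\in A}$ of Lemma \ref{lininde} is the regular representation of the isotropy group $A$ sitting inside $\pi_{\psi_{\max}}$, and its orthonormality is exactly what makes $W$ isometric in the isotropy directions, where distinct $a$ give distinct germs, while the germ identifications of Lemma \ref{lemsec5} match the collapses on both sides. The intertwiner then gives $\|\la^\Ga_w(\eta)\|\le\|\pi_{\psi_{\max}}(\eta)\|=\|\eta\|_{\min}$, so $\pi_{w_0}$ is isometric on the core and hence injective, and $\mO_{G_{\max}}\cong\mO_{G_{\min}}$ is simple.

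I expect the main obstacle to be the construction and isometry verification of $W$. The non-regular points form a $\mu$-null set and are invisible to $\psi_{\max}$ through the formula $\psi_{\max}(\eta^*\eta)=\int_{X^\om}\|\pi_w(\eta)\de_w\|^2\,d\mu(w)$, so no almost-everywhere vanishing argument can reach them; the only leverage is the exact matching between the isotropy action of $A$ inside $\la^\Ga_w$ and the regular representation of $A$ supplied by $(\mathrm{i})$. Aligning the bookkeeping of germ identifications (Lemmas \ref{lemsec5}, \ref{lemsec52}) with the inner-product computation coming from $\{\overline{a}\}$ is the delicate point on which the whole equivalence turns.
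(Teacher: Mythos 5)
Your architecture coincides with the paper's: the KMS/separating-vector argument for (iii)$\Rightarrow$(i), the Gram-matrix identification for (i)$\Leftrightarrow$(ii), the reduction of (i)$\Rightarrow$(iii) via gauge-invariance to the fibrewise norm estimate $\|\la^\Ga_w(\eta)\|\le\|\pi_{\psi_{\max}}(\eta)\|$ on $\C\langle G,X\rangle^\Ga$, and the use of the orthonormal system $\{\overline a\}_{a\in A}$ of Lemma \ref{lininde} to intertwine isotropy fibres with the copy of the regular representation of $A$ inside $H_{\psi_{\max}}$ are all exactly what the paper does; your prescription for $W$ (sending $\de_{[S_{u'}g'S^*_{w^{(n)}},w]}$ to $\pi_{\psi_{\max}}(S_{u'})\overline{g'}$) is literally the paper's vector $\tilde\xi$, and the verification you defer is carried out there by the computation over the sets $Z_{n'+2}$ and $T$.

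The genuine gap is your fibre dichotomy. You assert that the fibres where the estimate is not automatic ``are exactly the fibres carrying the germ isotropy $\{[a,w]:a\in A\}\cong A$''. That is false. The isotropy-$A$ fibres are exactly those over points whose tail lies in $(X\backslash Y)^\om$ (Lemma \ref{lemsec52}), but there exist points containing infinitely many letters of $Y$ --- hence, by Lemma \ref{lemsec5} and the contracting property, with \emph{trivial} isotropy in $[G,X]^\Ga$ --- which are nevertheless not strictly $G$-regular. Concretely, for the Grigorchuk group one has $Y=\{0\}$, and $w=0^\infty$ is fixed by $S_0\in\langle G,X\rangle$ while $S_0$ is not the identity on any neighborhood of $w$, so $w\notin\Ggen^{\text{S}}$; the same happens for any $vz^\infty$ with $z$ containing a letter of $Y$. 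For such $w$ your argument gives nothing: strict regularity fails, so the ``automatic equality'' is unavailable, and the isotropy is trivial, so your isometry built from $\{\overline a\}$ is not the relevant object. Nor can you appeal to Theorem \ref{uniqueNek}: trivial isotropy identifies $\la^\Ga_w$ with a restriction of the orbit representation $\pi_w$, and Theorem \ref{uniqueNek} says $\|\pi_{w_0}(\eta)\|\le\|\pi_w(\eta)\|$, i.e.\ it bounds the minimal norm \emph{from above} by such fibre norms --- the wrong direction. Even if you reinterpret ``regular'' as regularity with respect to gauge-invariant elements only (which does make the dichotomy true), the inequality $\|\la^\Ga_w(\eta)\|\le\|\pi_{\psi_{\max}}(\eta)\|$ at those trivial-isotropy, non-strictly-regular fibres is then precisely what remains to be proved. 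The paper devotes its entire first case to them: given the level $n$, it locates the first letter $x_{n+m}\in Y$ past level $n$, chooses a $G$-regular point $w_0\in w^{(n+m)}X^\om$ by density, and proves (the Claim, via Lemma \ref{lemsec5}) that germ equality at $w$ coincides with pointwise equality at $w_0$ for the level-$n$ elements involved, whence $\|\la^\Ga_w(\eta)\xi\|=\|\pi_{w_0}(\eta)\tilde\xi\|$. Without this case, or some substitute for it, your proof of (i)$\Rightarrow$(iii) does not cover all fibres and is incomplete.
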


\begin{proof}
The equivalence between (i) and (ii) is given by standard arguments.

If we assume (iii), 
then $\pi_{\psi_{\max}}$ is faithful by the simplicity of $\mO_{G_{\max}}$.
Recall that $\psi_{\max}$ is a KMS state on $\mO_{G_{\max}}$.
Therefore,
$\psi_{\max}$ is faithful by \cite[Corollary 5.3.9]{BR}.
This proves (iii) $\Rightarrow$ (i).

We show (i) $\Rightarrow$ (iii).
Combining the amenability of $[G, X]$ and Theorem \ref{isomNek1},
we have the identifications 
\[
\mO_{G_{\max}} \simeq C^*([G, X]) \simeq C^*_r([G, X]).
\]
We prove $\mO_{G_{\min}} \simeq C_r^*([G, X])$.
Recall that $\mO_{G_{\min}}$ is a simple quotient of $C_r^*([G, X])$ (Theorems \ref{uniqueNek}, \ref{simple}).
It is sufficient to show the injectivity of this quotient map.
Since the gauge action $\Ga$ is periodic,
we only check the injectivity on the gauge invariant subalgebra $C_r^*([G, X])^\Ga \simeq C_r^*([G, X]^\Ga)$ (see \cite[Proposition 4.5.1]{BO}).
We do this by the norm estimate.

Take any $w \in X^\om$ and any nonzero element $\eta \in \C \langle G, X \rangle^\Ga$.
The goal of this proof is to show
\[
\| \la^\Ga_w(\eta) \| \leq \| \pi_{\psi_{\max}} (\eta) \| = \|\pi_{w_0}(\eta) \|.
\]
Here 
%$\pi_{\psi_{\min}} \colon \mO_{G_{\min}} \to B(H_{\psi_{\min}})$ is the the faithful GNS representation of $\psi_{\min}$ and $\pi_{w_0}$ is a representation on $l^2(\langle G, X \rangle(w_0))$ for 
$w_0$ is a strictly $G$-regular point which we choose later. 
Recall that we have $\| \pi_{\psi_{\max}} (\eta) \| = \| \pi_{w'}(\eta) \|$ for any $w' \in \Ggen ^{\text{S}}$.
From now on,
we write $\psi$ for $\psi_{\max}$.

By the definition of operator norm,
for any $\ve >0$
one can choose a nonzero finitely supported function $\xi \in l^2([G, X]^\Ga_w)$ with
\[
\| \la^\Ga_w(\eta)\xi \| \geq (1-\ve) \| \la^\Ga_w(\eta) \| \|\xi \|.
\]
Write
\[
\eta = \sum_{u, v, g, |u| =|v|} \al_{u, v}^g S_u g S_v^*, \quad \xi = \sum_{u', g'} \be_{u'}^{g'} \de_{[S_{u'}g'S^*_{w^{(|u'|)}}, w]}.
\]
Here $\displaystyle \de_{[S_{u'}g'S^*_{w^{(|u'|)}}, w]}$ is the characteristic function on $[S_{u'}g'S^*_{w^{(|u'|)}}, w]$.
Note that $G$ is contracting \cite[Proposition 7.1]{SS}.
We write $\mN$ for the nucleus of $G$,
namely
\[
\mN := A \cup \bigcup_{y \in Y} \Psi(y)(A) \subset A \cup B.
\]  
By Remarks \ref{remsixone}, \ref{remsixtwo},
%taking a large natural number $n$,
we may assume that the finite sums are taken over $g, g' \in \mN$ and $u, v, u' \in X^n$ for some fixed $n \in \N$. 
We write 
\[
w =x_1x_2x_3 \cdots ; \quad x_1, x_2, x_3, \ldots \in X.
\]
%with the sequence $\{x_i \}_{i \in \N}$ of elements in $X$.
 
We divide the proof into two cases.
For the first case, 
assume that $w$ uses infinitely many alphabets in $Y$.
Then we have a natural number $n'$ with $n' >n$ such that there exists $i \in \N$ with $n < i < n'$ and $x_i \in Y$.
Note that we have 
\[
bS_{x} = S_{b(x)}, \ aS_{yx} = S_{y}S_{(\Psi(y)(a))(x)}
\]
for any $x \in X, y \in Y, a \in A, b \in B$.
This shows
\begin{equation*}
\begin{split}
[S_{u'}g'S^* _{w^{(n)}}, w] 
&= [S_{u'}g'S_{w^{(n)}}^* S_{w^{(n')}}S_{w^{(n')}}^*, w] \\
&= [S_{u'}S_{g'(w^{-(n)+(n')})}g'|_{w^{-(n)+(n')}}S_{w^{(n')}} ^*, w] \\
&= [S_{u'}S_{g'(w^{-(n)+(n')})}S_{w^{(n')}} ^*, w]
\end{split}
\end{equation*}
for any $u' \in X^n, g' \in \mN \subset A \cup B$.
Now replace $n$ by $n'$ and rewrite 
\[
\eta = \sum_{g \in \mN, u, v \in X^n} \al_{u, v}^g S_u g S_v^*, \quad \xi = \sum_{u' \in X^n} \be_{u'} \de_{[S_{u'}S^*_{w^{(n)}}, w]}.
\]
Then, 
we compute
\[
\la^\Ga_w(\eta)\xi = \sum_{g \in \mN, u, v , u' \in X^n} \al_{u, v}^g \be_{u'} \la_w ^\Ga(S_ugS_v^*) \de_{[S_{u'}S^*_{w^{(n)}}, w]} = \sum_{g, u, v} \al_{u, v}^g \be_{v} \de_{[S_{u}gS^*_{w^{(n)}}, w]}.
\]
Let $m$ be the smallest natural number with $x_{n+m} \in Y$.
Using the density of $\Ggen$ in $X^\om$,
we choose a $G$-regular point $w_0 \in w^{(n+m)}X^\om$.
Define a vector $\tilde{\xi} \in l^2(\langle G, X \rangle(w_0))$ by
\[
\tilde{\xi} := \sum_{u' \in X^n} \be_{u'} \de_{(S_{u'}S_{w^{(n)}} ^*(w_0))}.
\]
A calculation shows
\[
\pi_{w_0}(\eta)\tilde{\xi} = \sum_{g \in \mN, u, v , u' \in X^n} \al_{u, v}^g \be_{u'} \pi_{w_0}(S_ugS_v^*)\de_{(S_{u'}S^*_{w^{(n)}}(w_0))} = \sum_{g, u, v} \al_{u, v}^g \be_{v} \de_{(S_{u}gS^*_{w^{(n)}}(w_0))}.
\]  
Note that $\left\{ \de_{[S_{u'}S_{w^{(n)}} ^*, w]} \right\} _{u' \in X^n}$ and $\left\{ \de_{(S_{u'}S_{w^{(n)}} ^*(w_0)} \right\} _{u' \in X^n}$ are orthonormal systems in $\l^2(([ G, X ]^\Ga)_{w})$ and $l^2(\langle G, X \rangle(w_0))$,
respectively.
%\[
%\langle \de_{S_{u'_1}S_{w^{(n)}} ^*(w_0)}, \de_{S_{u'_2}S_{w^{(n)}} ^*(w_0)} \rangle_{\l^2(\langle G, X \rangle(w_0))} = \de_{u'_1, u'_2} = \langle \de_{[S_{u'_1}S_{w^{(n)}} ^*, w]}, \de_{[S_{u'_2}S_{w^{(n)}} ^*, w]} \rangle_{\l^2(([ G, X ])^\Ga)_{w_0}}  
%\]
Thus, 
we have $\| \xi \| = \| \tilde{\xi} \|$.
To compare the norms $\| \la^\Ga_w(\eta)\xi \|$ and $\| \pi_{w_0}(\eta) \tilde{\xi} \|$,
we prove the following claim.

\begin{claim}
Take any $u_1, u_2 \in X^n, g_1, g_2 \in A \cup B$.
Then the following conditions are equivalent.
\begin{enumerate}
\renewcommand{\labelenumi}{\textup{(\alph{enumi})}}
\item
$[S_{u_1}g_1S^*_{w^{(n)}}, w] = [S_{u_2}g_2S^*_{w^{(n)}}, w]$.
\item
$S_{u_1}g_1S^*_{w^{(n)}}(w_0) = S_{u_2}g_2S^*_{w^{(n)}}(w_0)$.
\end{enumerate}
\end{claim}

\begin{proof}[Proof of Claim]
First, 
we assume (a).
Then we have $u_1 = u_2$ and $g_1S_{w^{(n)}} ^*(w) = g_2S_{w^{(n)}} ^*(w)$.
Hence,
Lemma \ref{lemsec5} implies that $g_1S_{w^{(n)}} ^*$ and $g_2S_{w^{(n)}} ^*$ coincide on $w^{(n+m)}X^\om$ from the choice of $m$.
Since $w_0 \in w^{(n+m)}X^\om$,
we obtain (b).

Conversely,
we assume (b).
Then $u_1 = u_2$ and $g_1S_{w^{(n)}} ^*(w_0) = g_2S_{w^{(n)}} ^*(w_0)$.
Hence, 
Lemma \ref{lemsec5} shows that $g_1S_{w^{(n)}} ^*$ and $g_2S_{w^{(n)}} ^*$ coincide on $w_0 ^{(n+m)}X^\om = w^{(n+m)}X^\om$.
This implies (a).
\end{proof}

The claim implies $\| \la^\Ga_w(\eta)\xi \| = \| \pi_{w_0}(\eta) \tilde{\xi} \|$. 
Thus,
\begin{equation*}
%\begin{split}
\| \pi_{w_0}(\eta) \| \|\tilde{\xi} \| \geq \| \pi_{w_0} (\eta)\tilde{\xi} \| = \| \la^\Ga_w(\eta)\xi \| 
\geq  (1-\ve) \| \la^\Ga_w(\eta) \| \|\xi \| = (1-\ve) \| \la^\Ga_w(\eta) \| \|\tilde{\xi} \|.
%\end{split}
\end{equation*}
Since $\ve$ is arbitrary,
we have $\| \pi_{w_0}(\eta) \| \geq \| \la^\Ga_w(\eta) \|$. 

For the second case, 
we assume that $w$ uses at most finitely many alphabets in $Y$.
Choose a natural number $n'$ with $S^*_{w^{(n')}}(w) \in (X \backslash Y)^\om$ and $n' > n$.
%Note that $g_0|_{x} \in A$ for any $a_0 \in A \cup B, x \in X \backslash Y$.
Then we have
\begin{equation*}
\begin{split}
[S_{u'}g'S^* _{w^{(n)}}, w] 
&= [S_{u'}g'S_{w^{(n)}}^* S_{w^{(n'+2)}}S_{w^{(n'+2)}}^*, w] \\
&= [S_{u'}S_{g'(w^{-(n)+(n'+2)})}g'|_{w^{-(n)+(n'+2)}}S_{w^{(n'+2)}} ^*, w]
\end{split}
\end{equation*}
and $g'(x_{n'+2}) \in X \backslash Y, g'|_{w^{-(n)+(n'+2)}} \in A$
for any $g' \in \mN \subset A \cup B$.
For each $l \in \N$,
define
\[
Z_l := \{ u =z_1z_2\cdots z_l \in X^{l} : z_l \in X \backslash Y \}.
\]   
Note that 
for any $g_0 \in A \cup B, u_0 \in Z_l$,
we have $g_0|_{u_0} \in A$.
%Replace $u'$ and $g'$ by $u'g'(w^{-(n)+(n'+2)})$ and $g'|_{w^{-(n)+(n'+2)}} \in A$.
Thus,
one can rewrite 
\[
\xi = \sum_{u' \in Z_{n'+2}, g' \in A} \beta_{u'} ^{g'} \de_{[S_{u'}g'S_{w^{(n'+2)}} ^*, w]}.
\]
Take an orthonormal system $\{ \overline{g} \}_{g \in A}$ given in Lemma \ref{lininde}. 
%Using the orthonormal basis, 
Define a vector $\tilde{\xi} \in H_\psi$ by
\[
\tilde{\xi} := \sum_{u' \in Z_{n'+2}, g' \in A} \be_{u'}^{g'} \pi_{\psi}(S_{u'})\overline{g'}.
\]
Take any $g' _1, g' _2 \in A, u' _1, u' _2 \in Z_{n'+2}$. 
By Lemma \ref{lemsec52}, 
$[S_{u' _1}g' _1S_{w^{(n'+2)}} ^*, w] = [S_{u' _2}g' _2S_{w^{(n'+2)}} ^*, w]$ if and only if $S_{u' _1} = S_{u' _2}, g' _1 =g' _2$.
Thus,
$\| \xi \| = \| \tilde{\xi} \|$.

%We may assume that $\beta_{u'} ^{g'} = 0$ id the last alphabet of $u'$ is in $Y$.
For each
$u'  = z'_1z'_2 \cdots z'_{n'+2} \in Z_{n'+2}$
we define
\[
{u' }^{(n)} := z'_1z'_2 \cdots z'_n \in X^n.  
\]
Then for any $g \in \mN, g' \in A, u, v \in X^n, u' \in Z_{n'+2}$ with ${u'}^{(n)} = v$,
we have
\begin{equation*}
\begin{split}
\la^\Ga_w(S_ugS_v ^*) \de_{[S_{u'}g'S_{w^{(n'+2)}} ^*, w]}
&= \sum_{v' \in X^{n'+2-n}} \la^\Ga_w(S_ug S_{v'}S_{v'} ^*S_v ^*) \de_{[S_{u'}g'S_{w^{(n'+2)}} ^*, w]} \\
%&= \sum_{v' \in X^{n'+2-n}} \de_{vv', u'} \de_{[S_{ug(v')}g|_{v'}g'S_{w^{(n'+2)}} ^*, w]} \\
&= \de_{[S_{ug(v^{-1} u')}g|_{v^{-1} u'}g'S_{w^{(n'+2)}} ^*, w]}.
\end{split} 
\end{equation*}
%Here $\de$ is Kronecker delta.
Note that $g|_{v^{-1} u'} \in A$ since $v^{-1} u' \in Z_{n'+2-n}$.
Similarly,
we obtain
\[
\pi_{\psi}(S_ugS_v^*)\left(\pi_{\psi}(S_{u'})\overline{g'}\right) = \pi_{\psi}(S_{ug(v^{-1} u')})\overline{g|_{v^{-1} u'}g'}.
\]  
In addition,
for any $g \in \mN, g' \in A, u, v \in X^n, u' \in Z_{n'+2}$ with ${u'}^{(n)} \neq v$,
we have
\[
\la^\Ga_w(S_ugS_v ^*) \de_{[S_{u'}g'S_{w^{(n'+2)}} ^*, w]} = 0, \quad \pi_{\psi}(S_ugS_v^*)\left(\pi_{\psi}(S_{u'})\overline{g'}\right)=0.
\] 
Hence,
we get
\[
\la^\Ga_w(\eta)\xi = \sum_{u, v, u', g, g'} \al_{u, v}^g \be_{u'}^{g'} \de_{[S_{ug(v^{-1} u')}g|_{v^{-1} u'}g'S_{w^{(n'+2)}} ^*, w]}. 
\]
and 
\[
\pi_\psi(\eta)\tilde{\xi} = \sum_{u, v, u', g, g'} \al_{u, v}^g \be_{u'}^{g'} \pi_{\psi}(S_{ug(v^{-1} u')})\overline{g|_{v^{-1} u'}g'}.
\]
Here the sums are taken over $g \in \mN, g' \in A, u, v \in X^n, u' \in Z_{n'+2}$ with ${u'}^{(n)} = v$.

We claim $\| \la^\Ga_w(\eta) \xi \| = \| \pi_\psi(\eta) \tilde{\xi} \|$.
Define
\[
T := \{ S_{v'}h : h \in A, v' \in Z_{n'+2} \} \subset \langle G, X \rangle.
\] 
%For each $f = S_{v'}h\in T$,
%we write $\overline{f} :=S_{v'}\overline{h} \in H_{\psi}$.
Take any $S_{v'_1}h_1, S_{v'_2}h_2 \in T$.
By Lemma \ref{lemsec52}, 
$[S_{v'_1}h_1S^* _{w^{(n'+2)}}, w] = [S_{v'_2}h_2 S^* _{w^{(n'+2)}}, w]$ if and only if $S_{v'_1} =S_{v'_2}, h_1 = h_2$.
Hence,
$[S_{v'_1}h_1 S^* _{w^{(n'+2)}}, w] = [ S_{v'_2}h_2S^* _{w^{(n'+2)}}, w]$ if and only if $\pi_{\psi}(S_{v'_1})\overline{h_1} =\pi_{\psi}(S_{v'_2})\overline{h_2}$. 
Now,
we rewrite 
\begin{equation}
\label{maineq61}
\la^\Ga_w(\eta)\xi = \sum_{S_{v'}h \in T} \left( \sum_{S_{ug(v^{-1} u')}g|_{v^{-1} u'}g' = S_{v'}h} \al_{u, v}^g \be_{u'}^{g'} \right) \de_{[S_{v'}hS_{w^{(n'+2)}} ^*, w]}, 
\end{equation}
\begin{equation}
\label{maineq62}
\pi_\psi(\eta)\tilde{\xi} = \sum_{S_{v'}h \in T} \left( \sum_{S_{ug(v^{-1} u')}g|_{v^{-1} u'}g' = S_{v'}h} \al_{u, v}^g \be_{u'}^{g'} \right) \pi_{\psi}(S_{v'})\overline{h}. 
\end{equation}
The equations (\ref{maineq61}, \ref{maineq62}) shows the claim.
Consequently,
we have $\| \pi_{\psi}(\eta) \| \geq \| \la^\Ga_w(\eta) \|$ from the same argument as the first case.
Thus we finished the proof.
\end{proof}

\begin{cor}
Let $G=G(A, B, \Psi)$ be a multispinal group over $X$.
Assume that $[G, X]$ is amenable.
If one of the equivalent conditions of Theorem \ref{themsec5} holds,
then the universal \AL $\mO_{G_{\max}}$ is a Kirchberg algebra.
\end{cor}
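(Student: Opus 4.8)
The plan is to verify the four defining properties of a Kirchberg algebra for $\mO_{G_{\max}}$, assembling results already established in the paper. Recall that a Kirchberg algebra is a separable, nuclear, simple, purely infinite \AL, so it suffices to check each of these four properties in turn. Simplicity is immediate: by hypothesis one of the equivalent conditions of Theorem \ref{themsec5} holds, hence condition (iii) holds and $\mO_{G_{\max}}$ is simple.

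For separability and nuclearity I would argue as follows. Since $G$ is a multispinal group it is contracting by \cite[Proposition 7.1]{SS}, so $\mO_{G_{\max}}$ is finitely generated as a \AL (it is generated by the finite set $A \cup B$ together with $\{S_x\}_{x \in X}$), and in particular separable. For nuclearity I would invoke the amenability of $[G, X]$: an amenable \'etale groupoid has nuclear reduced groupoid \AL, so combining Theorem \ref{isomNek1} with the identification $C^*([G,X]) \simeq C^*_r([G,X])$ coming from amenability yields $\mO_{G_{\max}} \simeq C^*_r([G,X])$, which is therefore nuclear.

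Pure infiniteness is the only property that genuinely uses the standing hypothesis beyond simplicity, and here I would recycle the isomorphisms produced inside the proof of Theorem \ref{themsec5}. In the implication (i) $\Rightarrow$ (iii) of that proof one established the chain
\[
\mO_{G_{\max}} \simeq C^*([G,X]) \simeq C^*_r([G,X]) \simeq \mO_{G_{\min}}.
\]
By Theorem \ref{simple} the algebra $\mO_{G_{\min}}'$ is unital, purely infinite and simple, and since $\mO_{G_{\min}} \simeq \mO_{G_{\min}}'$, transporting this property across the isomorphism chain shows that $\mO_{G_{\max}}$ is purely infinite. Together with the previous paragraph, this exhibits $\mO_{G_{\max}}$ as separable, nuclear, simple and purely infinite, i.e. as a Kirchberg algebra.

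I expect no serious obstacle: the substantive work has already been carried out in Theorem \ref{themsec5}, and the corollary amounts to recording that the resulting isomorphism with $\mO_{G_{\min}}$ transports pure infiniteness while amenability supplies nuclearity. The only point demanding a little care is nuclearity in the possibly non-Hausdorff setting, where one must appeal to the fact that amenability of the (not necessarily Hausdorff) groupoid $[G,X]$ still forces $C^*_r([G,X])$ to be nuclear, precisely the same input that was used earlier in the paper for the full--reduced identification.
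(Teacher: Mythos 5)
Your proposal is correct and follows essentially the same route as the paper: both arguments verify the four Kirchberg properties by the same means --- simplicity from condition (iii), pure infiniteness by transporting Theorem \ref{simple} through an identification $\mO_{G_{\max}} \simeq \mO_{G_{\min}}$, nuclearity from the amenability of $[G,X]$ together with $\mO_{G_{\max}} \simeq C^*([G,X])$, and separability/unitality as immediate. The only cosmetic difference is that the paper gets $\mO_{G_{\max}} \simeq \mO_{G_{\min}}$ in one line (simplicity forces the canonical quotient map onto $\mO_{G_{\min}}$ to be injective), whereas you re-derive it by recycling the chain $\mO_{G_{\max}} \simeq C^*([G,X]) \simeq C^*_r([G,X]) \simeq \mO_{G_{\min}}$ from inside the proof of Theorem \ref{themsec5}; both justifications are valid.
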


\begin{proof}
Since the assumption implies that $\mO_{G_{\max}}$ is simple,
we have
\[
\mO_{G_{\max}} \simeq \mO_{G_{\min}}.
\]
Thus,
$\mO_{G_{\max}}$ is purely infinite simple by Theorem \ref{simple}.
By assumption, 
$[G, X]$ is amenable.
Then as $C^*([G, X])$ is nuclear, so is $\mO_{G_{\max}}$.
Trivially,
$\mO_{G_{\max}}$ is separable and unital.
Hence,
$\mO_{G_{\max}}$ is a Kirchberg algebra. 
\end{proof}

For the other corollary,
we fix notations.
Consider a multispinal group $G=G(A, B ,\Psi)$.
We write $C^*(B)$ for the full group \AL of the finite group $B$.
In the same way as $C^*(A)$,
we identify $C^*(B)$ with a finite dimensional C$^*$-subalgebra of $\mO_{G_{\max}}$.
For each $\la \in \text{Hom}(A, B)$,
define a $*$-homomorphism $\tilde{\la} \colon C^*(A) \to C^*(B)$ to be
\[
\tilde{\la}\left(\sum_{a \in A} \al_a a \right) := \sum_{a \in A} \al_a \la (a).
\]
Here each $\al_a$ is a complex number.
We recall a spacial case of \cite[Theorem 7.5]{SS}.

\begin{thm}\label{SSthm}\textup{(\cite[Theorem 7.5]{SS})}
Let $G=G(A, B, \Psi)$ be a multispinal group over $X$.
Then $\C \langle G, X \rangle$ is simple if and only if 
\[
\bigcap_{\la \in \mB \cdot \mA} \ker \tilde{\la} = \{ 0 \}.
\]
\end{thm}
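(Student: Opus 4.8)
The plan is to identify $\C\langle G, X\rangle$ with the complex Steinberg algebra of the groupoid $[G, X]$ and then to apply the simplicity theory for Steinberg algebras of non-Hausdorff ample groupoids (the algebraic counterpart of the C$^*$-results of \cite{CE, SS}). The isomorphism $\mO_{G_{\max}} \simeq C^*([G, X])$ of Theorem \ref{isomNek1} restricts to an isomorphism of $\C\langle G, X\rangle$ onto $\mathrm{span}\{1_{\mU_{\text{Dom}f, f}} : f \in \langle G, X\rangle\}$, so the question is purely algebraic. Since $[G, X]$ is minimal, effective and ample, but for multispinal $G$ non-Hausdorff, the governing criterion is that its Steinberg algebra is simple if and only if it admits no nonzero \emph{singular} function, i.e.\ no nonzero element vanishing on a dense subset of $[G, X]$. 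Thus I would reduce the theorem to the statement that $[G, X]$ carries a nonzero singular function precisely when $\bigcap_{\la \in \mB\cdot\mA}\ker\tilde\la \neq \{0\}$.

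The bridge between this topological condition and the algebraic kernel condition is a compression identity, which I would establish from the self-similarity. Lemma \ref{lemsec52} shows that over $w \in (X\backslash Y)^\om$ the germs $[a, w]$, $a \in A$, are pairwise distinct yet inseparable from $[e, w]$, whereas Lemma \ref{lemsec5} shows that as soon as $w$ meets $Y$ the germs of elements of $A \cup B$ collapse in a Hausdorff manner; hence the singular locus sits exactly over the points eventually avoiding $Y$, which form a nowhere dense set. For $z = \sum_{a\in A}\al_a a \in \C A$ and a finite word $u = x_1\cdots x_n$ with $x_1, \ldots, x_{n-1} \in X\backslash Y$ and $x_n \in Y$, pushing $z$ through the $S_{x_i}$ as in Remark \ref{remsixone} gives
\[
S_u^* z S_u = \bigl(\Psi(x_n)\circ\Psi(x_{n-1})\circ\cdots\circ\Psi(x_1)\bigr)(z) = \tilde\la(z),
\]
and every $\la \in \mB\cdot\mA$ arises from such a $u$. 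These compressions record the values of $f_z := \sum_a \al_a 1_{\mU_{\text{Dom}\, a, a}}$ on the germs that pass through a letter of $Y$, i.e.\ on the dense Hausdorff locus.

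Granting this dictionary, the two implications run as follows. If $\bigcap_\la\ker\tilde\la \neq \{0\}$, choose $0 \neq z$ in the intersection; then $S_u^* z S_u = \tilde\la(z) = 0$ for every word $u$ reaching $Y$, so $f_z$ vanishes on the dense Hausdorff locus while remaining nonzero (the elements of $A$ are linearly independent in $\C\langle G, X\rangle$). Hence $f_z$ is a nonzero singular function and $\C\langle G, X\rangle$ is not simple. Conversely, if the intersection is $\{0\}$, I would show that no nonzero singular function exists: using minimality to translate an arbitrary singular function toward a base point, the self-similar rewriting of Remarks \ref{remsixone} and \ref{remsixtwo} to express it through the nucleus $\mN = A \cup \bigcup_{y\in Y}\Psi(y)(A)$, and effectiveness to discard the non-isotropy part, one reduces a putative singular function to an isotropy contribution of the form $z \in \C A$ over $(X\backslash Y)^\om$; singularity then forces $\tilde\la(z) = 0$ for all $\la$, so $z \in \bigcap_\la\ker\tilde\la = \{0\}$. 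With minimality and effectiveness already available, the criterion then yields simplicity.

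The main obstacle is precisely this converse, where one must pass from the local compression identity to a global assertion about ideals. Controlling an arbitrary singular function (not a priori supported in $\C A$), carrying out the reduction to the isotropy over $(X\backslash Y)^\om$ uniformly, and managing the bookkeeping of singular supports in the non-Hausdorff setting constitute the technical heart of the argument; the forward direction, by contrast, is an explicit construction once the compression identity $S_u^* z S_u = \tilde\la(z)$ is in place.
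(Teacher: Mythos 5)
The paper itself contains no proof of Theorem \ref{SSthm}: it is quoted from \cite[Theorem 7.5]{SS}, so the only proof to compare against is the one in that reference --- and your proposal follows essentially the same route as it does: identify $\C\langle G, X\rangle$ with the Steinberg algebra of $[G,X]$ (via the restriction of the isomorphism of Theorem \ref{isomNek1}), invoke the criterion that for a minimal, effective ample groupoid the Steinberg algebra is simple if and only if it admits no nonzero singular function, and translate singular functions into the kernel condition through the compression identity $S_u^* z S_u = \tilde\la(z)$. Your forward implication is correct and essentially complete: for $0 \neq z = \sum_{a}\al_a a$ in $\bigcap_\la \ker\tilde\la$, the value of $f_z$ at a germ $[a,w]$ over a point $w$ whose first letter from $Y$ occurs at position $m$ equals the coefficient of $\la(a)$ in $\tilde\la(z)$, where $\la$ is the composition of the $\Psi(x_i)$ along $w^{(m)}$ (this uses Lemma \ref{lemsec5} together with freeness of $B \acts X$), so $\supp f_z$ lies over the closed nowhere dense set $(X\setminus Y)^\om$ and has empty interior, while Lemma \ref{lemsec52} guarantees $f_z \neq 0$.

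The gap is in the converse. The reduction of an arbitrary nonzero singular function to one of the form $f_z$ with $z \in \C A$ supported over $(X\setminus Y)^\om$ is asserted rather than carried out, and the two mechanisms you name would not accomplish it: minimality plays no role in that reduction, and effectiveness is not what eliminates the non-isotropy part. What actually does the work is the following. (a) After the nucleus rewriting of Remark \ref{remsixone}, a general element is $\sum_{u,v \in X^n} S_u z_{u,v} S_v^*$ with $z_{u,v} \in \C\mN \subset \C(A\cup B)$; since $S_u^* F S_v = z_{u,v}$ and singular functions form a two-sided ideal, singularity of $F$ passes to each $z_{u,v}$ (equivalently, the supports of the summands sit in the pairwise disjoint clopen sets $s^{-1}(vX^\om)\cap r^{-1}(uX^\om)$). (b) By freeness of $B \acts X$, germs of distinct elements of $B$ never coincide, nor does a germ of an element of $B\setminus\{e\}$ ever coincide with a germ of an element of $A$ (the former moves the first letter, the latter fixes it); hence the part of $z_{u,v}$ supported on $B\setminus\{e\}$ contributes a nonempty open set to the support and must vanish. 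Only after (a) and (b) does your compression argument apply to the remaining $z_{u,v} \in \C A$ and force $\tilde\la(z_{u,v})=0$ for every $\la \in \mB\cdot\mA$, hence $z_{u,v}=0$. A smaller slip: the points of $X^\om$ using only finitely many letters of $Y$ form a dense meagre set, not a nowhere dense one; the argument survives because each individual element of the algebra has its singular support over a finite union of nowhere dense sets. With (a) and (b) supplied --- which is precisely the content of the corresponding steps in \cite{SS} --- your outline closes.
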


Combining Theorems \ref{themsec5}, \ref{SSthm},
we have the following corollary.

\begin{cor}
Let $G=G(A, B, \Psi)$ be a multispinal group over $X$.
Assume that $[G, X]$ is amenable.
Then $\mO_{G_{\max}}$ is simple if and only if $\C \langle G, X \rangle$ is simple.
\end{cor}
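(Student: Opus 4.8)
The plan is to read off both sides through the characterizations already in hand and then to identify the two resulting algebraic conditions. By Theorem \ref{themsec5}, the \AL $\mO_{G_{\max}}$ is simple if and only if the restriction of $\psi_{\max}$ to $C^*(A)$ is faithful (condition (i), equivalently the invertibility of the Gram matrix in (ii)), while by Theorem \ref{SSthm} the $*$-algebra $\C\langle G, X\rangle$ is simple if and only if $\bigcap_{\la\in\mB\cdot\mA}\ker\tilde\la = \{0\}$. Thus it suffices to prove that these two conditions coincide. Since $\{a\}_{a\in A}$ is a basis of $C^*(A) = \text{span}(A)$ and $\psi_{\max}$ restricts to a positive functional there, condition (i) is equivalent to the nondegeneracy of the sesquilinear form $(\xi,\zeta)\mapsto\psi_{\max}(\xi^*\zeta)$ on $C^*(A)$, and I will compare this form with the family $\{\tilde\la\}_{\la\in\mB\cdot\mA}$.

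The bridge between the two conditions is the identity
\begin{equation*}
\psi_{\max}(\xi^*\xi) = \sum_{\la\in\mB\cdot\mA} c_\la\,\tau_B\!\left(\tilde\la(\xi)^*\tilde\la(\xi)\right), \qquad \xi = \sum_{a\in A}\al_a a \in C^*(A),
\end{equation*}
where $\tau_B$ is the canonical trace on $C^*(B)$ and the $c_\la$ are strictly positive weights. To obtain it I would first recall $\psi_{\max}(a) = \mu((X^\om)_a)$ from Theorem \ref{main2} and compute this measure from the self-similar structure. Writing $w = x_1 x_2\cdots$, the section $a|_{w^{(m)}}$ equals $\Psi(x_m)\circ\cdots\circ\Psi(x_1)(a)$; if $m$ is the first index with $x_m\in Y$, then $x_1,\dots,x_{m-1}\notin Y$ force $\Psi(x_1),\dots,\Psi(x_{m-1})\in\mA$ and $\Psi(x_m)\in\mB$, so $a|_{w^{(m)}} = \la(a)$ for $\la := \Psi(x_m)\circ\cdots\circ\Psi(x_1)\in\mB\cdot\mA$. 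Since $B$ acts \emph{freely} on $X$, a nontrivial element of $B$ has no fixed points in $X^\om$; hence $a$ fixes every point of the cylinder $w^{(m)}X^\om$ when $\la(a) = 1_B$ and no point of it otherwise. As $(X\setminus Y)^\om$ is $\mu$-null, summing the cylinder measures $|X|^{-m}$ gives $\mu((X^\om)_a) = \sum_{\la\in\mB\cdot\mA}c_\la\,\mathbf 1[\la(a) = 1_B]$, where $c_\la$ collects the measures of the prefixes realizing $\la$.

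Substituting this into $\psi_{\max}(\xi^*\xi) = \sum_{a_1,a_2}\overline{\al_{a_1}}\al_{a_2}\mu((X^\om)_{a_1^{-1}a_2})$ and using that $\la(a_1^{-1}a_2) = 1_B$ is equivalent to $\la(a_1) = \la(a_2)$ regroups the sum, for each fixed $\la$, into $\sum_{b\in B}\bigl|\sum_{a:\la(a)=b}\al_a\bigr|^2 = \tau_B(\tilde\la(\xi)^*\tilde\la(\xi))$, which yields the displayed identity. Because every $\la\in\mB\cdot\mA$ is realized by at least one prefix (here one uses $\mB = \Psi(Y)$ and $\mA = \Psi(X\setminus Y)$, so each defining composition $\la_1\circ\cdots\circ\la_n$ comes from a word with the corresponding letters), each weight $c_\la$ is strictly positive. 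Consequently $\psi_{\max}(\xi^*\xi) = 0$ holds precisely when $\tilde\la(\xi) = 0$ for every $\la\in\mB\cdot\mA$, so the form is nondegenerate if and only if $\bigcap_\la\ker\tilde\la = \{0\}$. This is exactly the promised equivalence of conditions, and the corollary follows.

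I expect the main obstacle to be the measure computation: tracking how the section of $a\in A$ first lands in $B$ as the word enters $Y$, and justifying that a nontrivial $B$-section destroys all fixed points in the corresponding cylinder, which is where the freeness of the action $B\acts X$ is essential. Verifying that $c_\la > 0$ for \emph{every} $\la\in\mB\cdot\mA$, and not merely for those realized by short prefixes, is the second point requiring care.
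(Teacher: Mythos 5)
Your proposal is correct, but it takes a genuinely different route from the paper. The paper proves the two implications by separate arguments: for ``$\mO_{G_{\max}}$ simple $\Rightarrow$ $\C\langle G,X\rangle$ simple'' it identifies $\C\langle G,X\rangle$ with the Steinberg algebra of $[G,X]$ and invokes \cite[Corollary 4.12]{CE}; for the converse it verifies condition (i) of Theorem \ref{themsec5} by a soft positivity argument: given a nonzero positive $\eta \in C^*(A)$, Theorem \ref{SSthm} supplies $\la \in \mB\cdot\mA$ with $\tilde\la(\eta) \neq 0$, the relation $aS_u = S_u\la(a)$ (for a word $u$ realizing $\la$) gives $\eta S_uS_u^* = S_u\tilde\la(\eta)S_u^*$, and the trace property of $\psi_{\max}$ on the gauge-invariant subalgebra yields $\psi_{\max}(\eta) \geq |X|^{-|u|}\psi_{\max}(\tilde\la(\eta)) > 0$. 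You instead prove the exact identity $\psi_{\max}(\xi^*\xi) = \sum_{\la\in\mB\cdot\mA} c_\la\,\tau_B(\tilde\la(\xi)^*\tilde\la(\xi))$ with all $c_\la > 0$, by decomposing $X^\om$ (up to the null set $(X\setminus Y)^\om$; null because $\mB \neq \emptyset$ forces $Y \neq \emptyset$) into the cylinders determined by the first letter in $Y$, and using freeness of $B \acts X$ to decide fixed points cylinder by cylinder --- this is the same mechanism as the paper's Lemma \ref{lemsec5}, but pushed to a full measure computation. Your identity immediately gives the \emph{equivalence} of condition (i) with $\bigcap_{\la}\ker\tilde\la = \{0\}$, so both directions of the corollary follow from Theorems \ref{themsec5} and \ref{SSthm} alone. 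What this buys: your argument is symmetric, bypasses the Steinberg-algebra machinery and the citation of \cite[Corollary 4.12]{CE} entirely, and yields a closed formula for $\psi_{\max}|_{C^*(A)}$ refining the paper's linear-equation computations in the examples; note that the paper's inequality $\psi_{\max}(\eta) \geq |X|^{-|u|}\psi_{\max}(\tilde\la(\eta))$ is precisely the single-term lower bound one extracts from your sum. What the paper's route buys is economy: its forward direction is a citation, and its backward direction avoids all measure-theoretic bookkeeping (convergence and positivity of the weights $c_\la$, exhaustion of $\mB\cdot\mA$ by admissible prefixes), which are exactly the points you correctly flag as needing care.
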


\begin{proof}
First, 
we show the simplicity of $\C \langle G, X \rangle$ from the simplicity of $\mO_{G_{\max}}$.
In sections 2, 4 of \cite{SS},
it was shown that $\C \langle G, X \rangle$ is isomorphic to the Steinberg $\C$-algebra associated to $[G, X]$.
Thus,
the simplicity of $\mO_{G_{\max}} \simeq C^*([G, X])$ implies the simplicity of $\C \langle G, X \rangle$ by \cite[Corollary 4.12]{CE}.  

Next,
we prove the simplicity of $\mO_{G_{\max}}$ from the simplicity of $\C \langle G, X \rangle$.
We check the condition (i) of Theorem \ref{themsec5}.
Take any nonzero positive element $\eta \in C^*(A)$.
Write
\[
\eta = \sum_{a \in A} \al_a a.
\] 
Since $\eta \neq 0$,
there exists a homomorphism $\la \in \mB \cdot \mA$ with $\tilde{\la}(\eta) \neq 0$ by Theorem \ref{SSthm}.
In the case $\la \notin \mB$,
write
\[
\la =\Psi(x_n) \circ \Psi(x_{n-1}) \circ \cdots \circ \Psi(x_1), \ u=x_1x_2 \cdots x_n .
\]
Here $n \geq 2, x_n \in Y$ and $x_i \in X \backslash Y$ for any $1 \leq i \leq n-1$.
In the case $\la \in \mB$,
we write $\la = \Psi(x_1)$ and $u =x_1 \in Y$.  
For each $a \in A$,
we have 
\begin{equation*}
a S_u = a S_{x_1}S_{x_2} \cdots S_{x_n} = S_{x_1} \Psi(x_1)(a) S_{x_2} \cdots S_{x_n} = \cdots = S_u \la(a).
\end{equation*}
This shows that
\begin{equation}\label{lastcor1}
\eta S_uS^* _u = \left( \sum_{a \in A} \al_a a \right) S_uS_u ^* = S_u \left( \sum_{a \in A} \al_a \la(a) \right) S_u ^* = S_u \tilde{\la}(\eta) S_u ^*.
\end{equation}
Note that $\tilde{\la}(\eta)$ is a nonzero positive element in $C^*(B)$.
We write
\[
(\tilde{\la}(\eta))^{\frac{1}{2}} = \sum_{b \in B} \beta_b b.
\]
Here $\be_{b_0} \neq 0$ for some $b_0 \in B$.
Since $\psi_{\max}(b) = \mu((X^\om)_b) =0$ for any $b \neq e$,
we have
\begin{equation}\label{lastcor2}
\psi_{\max}(\tilde{\la}(\eta)) = \psi_{\max}\left (\left(\sum_{b \in B} \beta_b b \right) ^* \left(\sum_{b \in B} \beta_b b \right)\right) = \sum_{b \in B} |\be_b |^2 > 0 .
\end{equation}
Recall that the restriction of $\psi_{\max}$ to $\mO_{G_{\max}} ^\Ga$ is tracial.
Then a calculation shows
\begin{equation}\label{lastcor3}
\psi_{\max}(\eta S_uS_u ^*) = \psi_{\max}(\eta ^\frac{1}{2} S_uS_u ^* \eta ^\frac{1}{2}) \leq \psi_{\max}(\eta ^\frac{1}{2} \eta ^\frac{1}{2} ) = \psi_{\max}(\eta). 
\end{equation} 
Combining (\ref{lastcor1}), (\ref{lastcor2}), (\ref{lastcor3}) and Theorem \ref{main2},
we obtain
\[
\psi_{\max}(\eta) \geq \psi_{\max}(\eta S_uS_u ^*) = \psi_{\max}(S_u \tilde{\la}(\eta) S_u ^*) = |X|^{-|u|}\psi_{\max}(\tilde{\la}(\eta)) > 0.
\]
This proves the claim.
\end{proof}

Using Theorem \ref{themsec5},
we show the simplicity of $\mO_{G_{\max}}$ in the case that $G$ is the Grigorchuk group.
The simplicity of $\mO_{G_{\max}}$ of the Grigorchuk group has been proved in \cite[Theorem 5.22]{CE}.
The proof in \cite{CE} is based on its main theorem. 
The main theorem provides a relevance between supports of functions on a non-Hausdorff groupoid and the simplicity of its reduced groupoid \AL (see \cite[Theorem 4.10]{CE}).
We observe a different proof in the following example.    

\begin{exa}\label{simplegri}
We regard the Grigorchuk group as a multispinal group (see Example \ref{ortgri}).
In this example,
we use the same symbols as in Example \ref{Gri} for generators.
In other words,
we write 
\[
\Z_2 \times \Z_2 = \{ e, b, c, d \}, \ \Z_2 = \{ e, a \}.
\]
Note that we have
\[
b^2 = c^2 = d^2 =e, bc =cb =d, bd =db =c, cd =dc =b.
\]
We determine the values $\psi(b), \psi(c), \psi(d)$ to check the condition (ii) in Theorem \ref{themsec5}.
This is done if one solves the following simultaneous equation given by the self-similarity:
\[
\psi(b) = \frac{\psi(a) + \psi(c)}{2} = \frac{\psi(c)}{2} ,
\] 
\[
\psi(c) = \frac{\psi(a) + \psi(d)}{2} = \frac{\psi(d)}{2} ,
\] 
\[
\psi(d) = \frac{\psi(e) + \psi(b)}{2} = \frac{1+ \psi(b)}{2}.
\] 
As the solution of these equations,
we have $\displaystyle \psi(b) = \frac{1}{7}, \psi(c) = \frac{2}{7}, \psi(d) = \frac{4}{7}$. 
Thus,
\[
\left(
    \begin{array}{cccc}
      \psi(e) & \psi(b) & \psi(c) & \psi(d) \\
      \psi(b) & \psi(e) & \psi(d) & \psi(c) \\
      \psi(c) & \psi(d) & \psi(e) & \psi(b) \\
      \psi(d) & \psi(c) & \psi(b) & \psi(e)
    \end{array}
  \right)
= \frac{1}{7}\left(
    \begin{array}{cccc}
      7 & 1 & 2 & 4 \\
      1 & 7 & 4 & 2 \\
      2 & 4 & 7 & 1 \\
      4 & 2 & 1 & 7
    \end{array}
  \right).
\]
Compute the determinant of the matrix
\[
\text{det}\left(
    \begin{array}{cccc}
      7 & 1 & 2 & 4 \\
      1 & 7 & 4 & 2 \\
      2 & 4 & 7 & 1 \\
      4 & 2 & 1 & 7
    \end{array}
  \right)
= 896.
\]
By Theorem \ref{themsec5}, 
the universal \AL $\mO_{G_{\max}}$ is simple.
\end{exa}

In the next example,
we observe a non-simple case.
Combining \cite[Corollary 4.12]{CE} and \cite[Example 7.6]{SS},
one can get the same result.

\begin{exa}
Let $A=\Z_2 \times \Z_2, B=\Z_2$ and $X =\{0, 1\}$.
Consider the left translation action $B \acts X$.
We define $\Psi(0) \in \text{Hom}(\Z_2 \times \Z_2, \Z_2)$ and $\Psi(1) \in \text{Aut}(\Z_2 \times \Z_2)$ to be 
\[
\Psi(0)(x, y) := y, \ \Psi(1)(x, y) := (y, x).
\]
Respecting the Grigorchuk group,
we write $b := (0, 1), c :=(1, 1), d: =(1, 0)$ and let $a$ denote the generator of $\Z_2$.
Then one obtains
\[
a(0w) = 1w , \ a(1w) = 0w, 
\]
\[
 b(0w) = 0a(w), \ b(1w) = 1d(w),
\]
\[
c(0w) = 0a(w) , \ c(1w) = 1c(w), 
\]
\[
 d(0w) = 0w , \ d(1w) = 1b(w)
\]
for any $w \in X^\om$.
These equations give
\[
\psi(b) = \frac{\psi(a) + \psi(d)}{2} = \frac{\psi(d)}{2} ,
\] 
\[
\psi(c) = \frac{\psi(a) + \psi(c)}{2} = \frac{\psi(c)}{2} ,
\] 
\[
\psi(d) = \frac{\psi(e) + \psi(b)}{2} = \frac{1+ \psi(b)}{2}.
\] 
Then we get
$\displaystyle \psi(b) = \frac{1}{3}, \psi(c) = 0, \psi(d) = \frac{2}{3}$.
Thus,
we have
\[
\det\left(
    \begin{array}{cccc}
      \psi(e) & \psi(b) & \psi(c) & \psi(d) \\
      \psi(b) & \psi(e) & \psi(d) & \psi(c) \\
      \psi(c) & \psi(d) & \psi(e) & \psi(b) \\
      \psi(d) & \psi(c) & \psi(b) & \psi(e)
    \end{array}
  \right)
= \det\left(\frac{1}{3}\left(
    \begin{array}{cccc}
      3 & 1 & 0 & 2 \\
      1 & 3 & 2 & 0 \\
      0 & 2 & 3 & 1 \\
      2 & 0 & 1 & 3
    \end{array}
  \right)\right)
=0.
\]
Theorem \ref{themsec5} implies that the universal \AL $\mO_{G_{\max}}$ is not simple in this case.
\end{exa}

\begin{exa}
Next we observe the case $A = \Z_3 \times \Z_3, B=X=\Z_3$.
The action of $B$ on $X$ is the left translation action.
We regard the elements in $A$ as $\Z_3$-valued row vectors. 
We define $\Psi(0), \Psi(1) \in \text{Aut}(\Z_3 \times \Z_3)$ and $\Psi(2) \in \text{Hom}(\Z_3 \times \Z_3, \Z_3)$ to be 
  \[
\Psi(0) 
= \left( \begin{array}{cc}
1 & 1 \\
0 & 1
\end{array} \right), \
\Psi(1) 
= \left( \begin{array}{cc}
1 & 0 \\
1 & 1
\end{array} \right), \ 
\Psi(2) 
= \left( \begin{array}{cc}
0 & 1 
\end{array} \right).
\]
Let
\[
a_0 = 
\left( \begin{array}{c}
0  \\
0
\end{array} \right) , \
a_1 = 
\left( \begin{array}{c}
1  \\
0
\end{array} \right) , \
a_2 = 
\left( \begin{array}{c}
0  \\
1
\end{array} \right) , \
a_3 = 
\left( \begin{array}{c}
1  \\
1
\end{array} \right) , \
a_4 = 
\left( \begin{array}{c}
1  \\
2
\end{array} \right). 
\]
By the self-similarity,
we have
\[
\psi(a_1) = \frac{\psi(a_1) + \psi(a_3)+1}{3}, 
\]
\[
\psi(a_2) = \frac{\psi(a_3) + \psi(a_2)}{3}, 
\]
\[
\psi(a_3) = \frac{2 \psi(a_4)}{3}, 
\]
\[
\psi(a_4) = \frac{\psi(a_2) + \psi(a_1)}{3}. 
\]
From these equations,
we obtain
\[
\psi(a_1)=\psi(a^{-1}_1)=\frac{4}{7}, 
\]
\[
\psi(a_2)=\psi(a^{-1}_2)=\frac{1}{14}, 
\]
\[
\psi(a_3)=\psi(a^{-1}_3)=\frac{1}{7}, 
\]
\[
\psi(a_4)=\psi(a^{-1}_4)=\frac{3}{14}.
\]
Hence we obtain the matrix
\[
[\psi(a^{-1}_ia_j )]_{0\leq i, j \leq 8}
=\frac{1}{14}
\left(
    \begin{array}{ccccccccc}
      14&1&8&2&3&1&8	&2&3 \\
      1&14&3&8&2&1&2&3&8 \\
      8&3&14&1&1&2&8&3&2 \\
      2&8&1&14&1&3&3&2&8 \\
      3&2&1&1&14&8&2&8&3 \\
      1&1&2&3&8&14&3&8&2 \\
      8&2&8&3&2&3&14&1&1 \\
      2&3&3&2&8&8&1&14&1 \\
      3&8&2&8&3&2&1&1&14 \\
    \end{array}
  \right).
\]
Here $a_5 :=a_1 ^{-1}, a_6 :=a_2 ^{-1}, a_7 :=a_3 ^{-1}, a_8 :=a_4 ^{-1}$. 
The MDETERM function of Excel provides the result of the calculation as follows: 
\[
\det\left(
    \begin{array}{ccccccccc}
      14&1&8&2&3&1&8	&2&3 \\
      1&14&3&8&2&1&2&3&8 \\
      8&3&14&1&1&2&8&3&2 \\
      2&8&1&14&1&3&3&2&8 \\
      3&2&1&1&14&8&2&8&3 \\
      1&1&2&3&8&14&3&8&2 \\
      8&2&8&3&2&3&14&1&1 \\
      2&3&3&2&8&8&1&14&1 \\
      3&8&2&8&3&2&1&1&14 \\
    \end{array}
  \right)
=634894848.
\]
As a consequence,
we get the simplicity of $\mO_{G_{\max}}$ in this case. 
\end{exa}
\subsection*{Acknowledgements}
The author appreciates his supervisors,
Yuhei Suzuki and Reiji Tomatsu, for fruitful discussions and their constant encouragements.

\end{document}